\definecolor{green}{rgb}{0,0.8,0} 
\renewcommand{\Re}{\mathrm{Re}}
\renewcommand{\Im}{\mathrm{Im}}
\newcommand{\wgamma}{w_{\gamma}}
\newcommand{\wgammap}{w_{\gamma}^\prime}
\newcommand{\ubar}{\bar u}
\newcommand{\alphabar}{\bar\alpha}
\newcommand{\Kbar}{\con K_0}
\newcommand{\Cbar}{\bar C}
\newcommand{\Vo}{V_{\mathrm{O}}}
\definecolor{deepgreen}{cmyk}{1,0,1,0.5}
\newcommand{\chibar}{\underline{\chi}}
\newcommand{\LL}{\mathcal{L}}
\newcommand{\EE}{\mathscr{E}}
\newcommand{\R}{\mathbb{R}}
\newcommand{\Lbar}{\bar L}
\newcommand{\Rmnum}[1]{\expandafter\@slowromancap\romannumeral #1@}
\newcommand{\taup}{\tau_{+}}
\newcommand{\taum}{\tau_{-}}
\newcommand{\con}{\overline}
\renewcommand{\bar}{\underline}
\newcommand{\phibar}{\overline{\phi}}
\newcommand{\Align}[1]{\begin{align}\begin{split} #1 \end{split}\end{align}}
\newcommand{\Aligns}[1]{\begin{align*}\begin{split} #1 \end{split}\end{align*}}
\newcommand{\EQ}[1]{\begin{equation}\begin{split} #1 \end{split}\end{equation}}
\newcommand{\Del}[1]{}
\newcommand{\sD}{\slashed{D}}
\newcommand{\snab}{\slashed{\nabla}}
\newcommand{\SGamma}{\slashed{\Gamma}}
\renewcommand{\star}{{}^{*}}
\numberwithin{equation}{section}
\newtheorem{theorem}{Theorem}[section]
\newtheorem{corollary}{Corollary}[section]
\newtheorem{lemma}{Lemma}[section]
\newtheorem{proposition}{Proposition}[section]
\theoremstyle{remark}
\newtheorem{remark}{Remark}[section]
\renewcommand\Re{\mathrm{Re}\,}
\renewcommand\Im{\mathrm{Im}\,}
\newcommand{\phithree}{\LL^k\phi}
\newcommand{\rhotwo}{\rho}
\newcommand{\alphatwo}{\alpha}
\newcommand{\alphabartwo}{\alphabar}
\newcommand{\sigmatwo}{\sigma}
\newcommand{\phitwo}{\phi}
\newcommand{\rhobar}{\overline{\rho}}
\begin{document}
\title[Asymptotic properties of small data MKG]{Asymptotic properties of solutions of the Maxwell Klein Gordon equation with small data}
\author{Lydia Bieri}
\author{Shuang Miao}
\author{Sohrab Shahshahani}
\begin{abstract}
We prove peeling estimates for the small data solutions of the Maxwell Klein Gordon equations with non-zero charge and with a non-compactly supported scalar field, in $(3+1)$ dimensions. We obtain the same decay rates as in an earlier work by Lindblad and Sterbenz, but giving a simpler proof. In particular we dispense with the fractional Morawetz estimates for the electromagnetic field, as well as certain space-time estimates. In the case that the scalar field is compactly supported we can avoid fractional Morawetz estimates for the scalar field as well. All of our estimates are carried out using the double null foliation and in a gauge invariant manner.
\end{abstract}
\maketitle
\section{Introduction}
In this work we study the asymptotic behavior of solutions to the Maxwell Klein Gordon (MKG) equations on $\R^{3+1}$ with small initial data. The exact sense in which we take the data to be small will be made precise later. The MKG equations describe an electromagnetic field  represented by a two form $F_{\mu\nu}$ and a {charged scalar field represented by a complex valued function $\phi,$} and are given by
\EQ{\label{MKG}
&\nabla^\mu F_{\mu\nu}=\Im(\con\phi D_\nu\phi)=:J_\nu(\phi)\\
&\nabla^\mu \star F_{\mu\nu}=0\\
&D^\mu D_\mu\phi=0,
}
where $D$ and $F$ are related by
\Aligns{
D_\mu=\nabla_\mu+iA_\mu,\quad\quad\quad F_{\mu\nu}=\nabla_\mu A_\nu-\nabla_\nu A_\mu.
}
Here $\nabla$ is the Levi-Civita covariant differentiation operator and $*$ the Hodge star operator. {In fact the second equation of (\ref{MKG}) is a consequence of the other two and the relation between $F$ and $A.$} From a variational point of view these equations arise as the Euler-Lagrange equations associated to the {Lagrangian
\[\LL_{MKG}:=\frac{1}{4}F_{\mu\nu}F^{\mu\nu}+\frac{1}{2}D_\mu\phi \overline{D^\mu\phi}.\]
The} natural mathematical setting for studying these equations is a complex line bundle over $\R^{3+1}.$ In this setting $\phi$ represents a section of the bundle and $F$ is the curvature form associated with the bundle covariant differentiation $D.$ \\\\
Decay estimates for the null decomposition of the electromagnetic field $F$ (peeling estimates) were first obtained for the linear Maxwell equation in \cite{CK1}. The chargeless MKG was treated by Choquet-Bruhat and Christodoulou in \cite{ChCh1} using the conformal method. Global regularity for MKG was also studied by Eardley and Moncrief in \cite{EardleyMoncrief1,EardleyMoncrief2}. See also \cite{GinVel1,Seg1}. While to the best of our knowledge \cite{LS1} and \cite{Shu2} are the only works where peeling estimates are considered for the MKG equations, such estimates for the massive MKG and the Yang-Mills equation were proved in \cite{Psarelli1} and \cite{Shu1} respectively. The low-regularity MKG problem has also been an active area of recent research, but as we are concerned only with the high regularity scenario we only list the following few works to which we refer the reader for more details and references: \cite{KlaMach1,KeelRoyTao,MachSter1,KriegerSterbenzTataru1,SigmundTesfahun1,R-T}.\\\\
The study of the asymptotic behavior of small data solutions of (\ref{MKG}) with charge was initiated  by Shu in \cite{Shu2}. However, as noted by Lindblad and Sterbenz in \cite{LS1}, Shu provides only a rough outline of a method in the article \cite{Shu2} {and many important details are missing from the argument.}  In \cite{LS1} Lindblad and Sterbenz derived the (slightly modified statements of the) results claimed in \cite{Shu2} using a different argument. Lindblad and Sterbenz introduced fractional Morawetz estimates to handle the contributions coming from the charge, and they considered the case where the scalar field has non-compact support. In addition to the fractional Morawetz estimates, these authors use weighted $L^2L^\infty$ ``Strichartz-type" estimates to control some of the error terms arising in their analysis. The difficulty of handing the charge is reflected in the fact that the ``charged null component" $\rho$ of $F$ enjoys worse decay outside a fixed null cone $C_{-1}$ compared to the decay inside the cone. Our goal in this paper is to obtain the decay estimates derived in \cite{LS1} using the method outlined by Shu. Our main achievement in the non-compactly support case, which allows for a shorter and simpler proof, is to avoid use of fractional Morawetz estimates for the electromagnetic field and the Strichartz-type estimates. In the case where the scalar field has compact support we are also able to dispense with the fractional Morawetz estimates for the scalar field. {Moreover, our method seems more readily adaptable to curved backgrounds, a fact which can be significant for instance when considering the Einstein equations coupled to the MKG equations.}
\subsection{Statement of the result and outline of the strategy} The following theorem is the main result of this work (see Section \ref{Notations} for the definition of the notation).
{\begin{theorem}\label{main theorem}
Suppose that one of the following holds: 
\begin{enumerate}[(i)]
\item $\phi(0,\cdot)$ and $D_t\phi(0,\cdot)$ are supported on $\{r\leq3/4\}$, and that the initial data for the Cauchy problem (\ref{MKG}) satisfy the following smallness conditions
\Align{\label{initial estimates}
&\sum_{\stackrel{\Gamma\in\mathbb{L}}{k\leq7}}\int_{\Sigma_0}(|D_{t,x}\LL^k_\Gamma\phi|^2+|\LL^k_\Gamma\phi|^2)r^2dx\leq\epsilon^2,\\
&\sum_{\stackrel{\Gamma\in\mathbb{L}}{k\leq7}}\int_{\Sigma_0}\left(|\alpha(\LL^k_\Gamma F)|^2+|\sigma(\LL^k_\Gamma F)|^2+|\alphabar(\LL^k_\Gamma F)|^2+r^{-2}|\rho(\LL^k_\Gamma F)|^2\right)r^2dx\leq\epsilon^2,\\
&\sum_{\stackrel{\Gamma\in\mathbb{L}}{k\leq6}}\int_{\Sigma_0}|\rho(\LL_{\Omega_{ij}}\LL^k_\Gamma F)|^2r^2dx\leq\epsilon^2.
}
\item $\phi(0,\cdot)$ and $D_t\phi(0,\cdot)$ are not compactly supported but (\ref{initial estimates}) holds with the following modification for the scalar field energies:
\Align{\label{modified initial estimates}
&\sum_{\stackrel{\Gamma\in\mathbb{L}}{k\leq7}}\int_{\Sigma_0}(|D_{t,x}\LL^k_\Gamma\phi|^2+|\LL^k_\Gamma\phi|^2)r^{2+2\gamma}dx\leq\epsilon^2,\\
}
for some $\gamma\in(0,1/2).$
\end{enumerate}
Then if $\epsilon$ is sufficiently small, the locally defined solution can be extended globally in time and the following decay estimates hold
\begin{alignat}{2}\label{peeling estimates}
&|\phi|\lesssim\epsilon\taup^{-1}\taum^{-1/2},\quad\quad\quad\quad&&|\sD\phi|\lesssim\epsilon\taup^{-2}\taum^{-1/2},\nonumber\\
&|D_L\phi|\lesssim\epsilon\taup^{-2}\taum^{-1/2},\quad\quad\quad\quad&&|D_{\Lbar}\phi|\lesssim\epsilon\taup^{-1}\taum^{-3/2},\nonumber\\
&|\alpha|\lesssim\epsilon\taup^{-5/2},\quad\quad\quad\quad&&|\alphabar|\lesssim\epsilon\taup^{-1}\taum^{-3/2},\\
&|\sigma|\lesssim\epsilon\taup^{-2}\taum^{-1/2},\quad\quad\quad\quad&&|\rho|\lesssim\begin{cases} \epsilon\taup^{-2}\taum^{-1/2} &\mbox{if } u \geq -1 \\
\epsilon r^{-2} & \mbox{if } u\leq-1. \end{cases} \nonumber
\end{alignat}
Moreover in case (ii) above the scalar field enjoys the stronger decay estimates
\begin{alignat}{2}\label{peeling estimates}
&|\phi|\lesssim\epsilon\taup^{-1}\taum^{-1/2-\gamma},\quad\quad\quad\quad&&|\sD\phi|\lesssim\epsilon\taup^{-2}\taum^{-1/2-\gamma},\nonumber\\
&|D_L\phi|\lesssim\epsilon\taup^{-2}\taum^{-1/2-\gamma},\quad\quad\quad\quad&&|D_{\Lbar}\phi|\lesssim\epsilon\taup^{-1}\taum^{-3/2-\gamma}.\nonumber
\end{alignat}
\end{theorem}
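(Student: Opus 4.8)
The plan is to run a continuity/bootstrap argument along a double null foliation. First I would set up the bootstrap: assume the decay rates in \eqref{peeling estimates} hold on a slab $[0,T^*)$ with a constant $C\epsilon$ (or with $C\epsilon$ replaced by a somewhat larger constant), and aim to recover them with a strictly better constant, so that by local existence and the usual continuity argument $T^*=\infty$. The workhorse is the hierarchy of weighted energy estimates obtained by applying the vector fields $\Gamma\in\mathbb L$ (translations, rotations, boosts, scaling — the ones commuting suitably with $\sD$ and the Maxwell operator) up to order $7$ to both $\phi$ and $F$, together with a $p$-weighted ($r^p$) energy flux estimate of the type used by Dafermos–Rodianou through the foliation's outgoing and incoming cones. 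The charge is peeled off first: decompose $F=\bar F+\tilde F$ where $\bar F$ carries the charge $q_0=\frac{1}{4\pi}\int_{S}\rho$ through the Coulombic part supported essentially on $u\le -1$ with $|\rho(\bar F)|\lesssim\epsilon r^{-2}$, and $\tilde F$ is chargeless with zero total flux of $\rho$ over large spheres. This is exactly the structural observation flagged in the introduction — that $\rho$ decays worse than $r^{-2}\taum^{-1/2}$ only because of the charge — and it lets one avoid the fractional Morawetz estimates that Lindblad–Sterbenz used: the charged part is handled explicitly and the chargeless part satisfies the full Morawetz/$r^p$ hierarchy.

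The key steps, in order, would be: (1) commutator identities — compute $[\,\square_A,\LL_\Gamma^k\,]\phi$ and $[\,\nabla^\mu\nabla_\mu,\LL_\Gamma^k\,]F$, controlling the lower-order error terms that appear, including the inhomogeneous contribution of the current $J_\nu(\phi)=\Im(\bar\phi D_\nu\phi)$; (2) energy estimates — from the stress-energy tensor of $\phi$ and of $F$ (the latter gauge invariant, the null components $\alpha,\alphabar,\rho,\sigma$), derive the fundamental weighted $L^2$ bounds on all cones $C_u$ and $\underline C_{\underline v}$ and on the $\Sigma_t$ slices, with the initial data controlled by \eqref{initial estimates} or \eqref{modified initial estimates}; (3) the $r^p$-weighted flux estimates: for the compactly supported case take $p$ up to $2$ (respectively $2+2\gamma$ in case (ii)) to extract the extra $\taum$ (respectively $\taum^{\gamma}$) decay in the interior; (4) Klainerman–Sobolev-type inequalities on the double null foliation to convert the $L^2$ energy bounds with $k\le 7$ (so $k-2\le 5$ derivatives to spare for Sobolev embedding on $2$-spheres and in the $u,\underline v$ variables) into the pointwise rates in \eqref{peeling estimates}; (5) close the bootstrap by checking that every error term — in particular the worst ones, coming from the interaction of the long-range charged part $\bar F$ with $D\phi$ in the equation for $\phi$, and the nonlinear term $J(\phi)$ in the equation for $\tilde F$ — is borderline-integrable and produces a strictly smaller constant once $\epsilon$ is small.

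I expect the main obstacle to be precisely the long-range effect of the charge. The covariant wave operator $\square_A$ with $A$ the Coulombic potential $A_L\sim q_0 u^{-1}$ near spatial infinity is \emph{not} asymptotically the flat wave operator: the term $iA_\mu\partial^\mu\phi$ is long-range and forces a logarithmic correction to the phase/decay of $\phi$ unless one tracks it carefully. Concretely, when estimating $D_{\Lbar}\phi$ (the component with the strongest $\taum^{-3/2}$ claim) one meets $A_{\Lbar}D_L\phi$-type terms whose naive size is only borderline; the resolution is to exploit the good null structure — $A_{\Lbar}$ is the well-behaved component, and the dangerous combination $A_L$ multiplies $D_{\Lbar}\phi$, which is itself better — so the product lands just inside the integrable range, but verifying this requires the sharp form of the charged-part estimate $|\rho(\bar F)|\lesssim \epsilon r^{-2}$ with no $\taum$ loss and a matching bound on $\bar F$'s potential in a good gauge. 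The secondary difficulty is bookkeeping: ensuring the hierarchy of $7$ commuted energies closes, i.e. that the top-order energy is estimated using only lower-order pointwise bounds (no derivative loss), which is where the choice to commute with $\mathbb L$ rather than with all of the conformal algebra, and the gauge-invariant treatment of $F$, keep the argument clean. In case (ii) the extra weight $\gamma<1/2$ is exactly what is needed so that the scalar-field $r^p$ hierarchy reaches a power at which the $\taum$-decay of $\phi$ improves to $\taum^{-1/2-\gamma}$ while still staying below the threshold $p<2+1$ where the flux estimate would degenerate.
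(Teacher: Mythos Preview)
Your outline is a plausible route to the theorem, but it diverges from the paper's argument in several concrete technical choices, so this falls under ``different approach'' rather than ``same proof'' or ``wrong proof.''

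\textbf{Where you and the paper agree.} Both set up a bootstrap on the peeling rates, commute the vector fields in $\mathbb L$ up to top order, derive weighted energy identities from the stress--energy tensors of $\phi$ and $F$, and convert energy bounds to pointwise decay via Klainerman--Sobolev type inequalities. Both isolate the charge contribution in the exterior region $\{u\le -1\}$ and treat it separately.

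\textbf{Where you differ.} First, the paper does \emph{not} use a Dafermos--Rodnianski $r^p$ hierarchy. Its multipliers are the classical ones: $\partial_t$ and the Morawetz vector field $K_0$ (equivalently $\overline{K}_0$), applied in the conformal-metric formulation of Lindblad--Sterbenz. The extra $\tau_-$ decay in case~(ii) comes from a single weighted Morawetz estimate with weight $w_\gamma=\tau_-^{2\gamma}$ (Lemma~3.1 of the paper), not from an $r^p$ ladder. Second, the paper does \emph{not} subtract an explicit Coulombic two-form $\bar F$. Instead it observes that $\LL_{\Omega_{ij}}F$ is automatically chargeless, runs the full $K_0$ energy estimate on $\LL_{\Omega_{ij}}F$ in the exterior, and then recovers $\rho$ from $\LL_{\Omega_{ij}}\rho$ via the Poincar\'e inequality on spheres, while the spherical mean $\overline\rho$ is controlled directly from the transport equation $L(r^2\overline\rho)=\cdots$ and the initial charge bound. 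This is the mechanism that replaces the fractional Morawetz for $F$ used by Lindblad--Sterbenz. Third, the paper is deliberately gauge-invariant throughout: there is no discussion of a potential $A_L\sim q_0 u^{-1}$ or of long-range phase corrections; every estimate is phrased in terms of $D\phi$ and the null components of $F$. Fourth, a tool you do not mention but which the paper relies on to close certain borderline error integrals (specifically those pairing $\underline\alpha$ with $(D_L+\tfrac1r)\phi$ or $\alpha$ with $(D_{\Lbar}-\tfrac1r)\phi$ at high derivative order) is an $L^4$ Sobolev estimate on spheres along null cones, taken from Shu.

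\textbf{What each approach buys.} Your proposed $r^p$/Coulombic-subtraction route is closer to later treatments of MKG and is robust, but it does require either an explicit background gauge or a careful definition of the subtracted field $\tilde F$, and the $r^p$ ladder introduces its own bookkeeping. The paper's $\LL_\Omega$ plus Poincar\'e trick is arguably more elementary and stays gauge-invariant, at the price of requiring the last commuted vector field at top order to be a rotation (so that Poincar\'e applies without losing a derivative). Note also a small inaccuracy in your summary: the paper does \emph{not} avoid fractional Morawetz entirely --- it avoids it for $F$, but in the non-compactly supported case~(ii) it still uses a fractional ($\tau_-^{2\gamma}$-weighted) Morawetz estimate for the scalar field, precisely Lemma~3.1.
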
}
The number of derivatives is not optimal, but regularity is not our concern here. By carefully studying our proof the minimum possible order can be investigated. Note also that in the proof we commute $k$ derivatives with $k\geq7$ and the last derivative that we commute will always be a rotational vector-field. This has to do with using the Poincar\'e inequality to deal with the contribution of the charge, and is explained during the proof. The proof follows the classical energy method used in \cite{CK1}. We use the Morawetz vector field $K_0$ and the time translation vector field $\partial_t$ as multipliers and $\partial_\mu,~{S,}$ and $\Omega_{\mu\nu}$ as commutators (see Subsection \ref{vector fields} for the definition of these vector fields). In implementing the energy method we face two {major} challenges. First, since the equation is not conformally Killing invariant, we will need to bound error terms when using the energy method. It turns out that to obtain optimal decay of the error terms, we need to carefully take into account the structure of the equations to observe certain cancelations. While some of these cancelations were already observed in \cite{Shu2}, there are many more which arise in the terms not considered there. {These cancelations are especially important in view of our use of the {double} null foliation in the error analysis.}\\
The second difficulty {is} due to the non-vanishing of the electric charge (defined below). Specifically, the multiplier $K_0$ contributes boundary terms which, in the presence of a non-zero charge, are not finite. In particular, with {$E_i:=F_{0i},$} it can be seen  from the divergence theorem that if the electric charge
\[e(t)=\frac{1}{4\pi}\lim_{r\rightarrow\infty}\int_{S_{t,r}}E_r\]
is non-zero (say for $t=0$), then the following integral cannot be convergent
\[\int_{\Sigma_0}(1+r^2)|E|^2 dx.\]
In the null decomposition of $F$ this is reflected in the fact that the energy for {$\rho:=\frac{1}{2}F_{L\Lbar}$} on $\Sigma_t$ is infinite. To salvage the situation, we divide $\R^{3+1}$ into the two regions $\{u\geq -1\}$ and $\{u\leq-1\}.$ Inside the null-cone $u=-1$ the space-like hypersurface $t=0$ has finite radius and the integral above is convergent. To bound the energies outside the $u=-1$ cone and the flux along $C_{-1}$ we make crucial use of the Poincar\'e inequality
\[\int_{S_{t,r}}|\rho-\con{\rho}|^2\leq C\sum_{1\leq i<j\leq3}\int_{S_{t,r}}|\LL_{\Omega_{ij}}\rho|^2.\]
The point here is that $\LL_{\Omega_{ij}}F$ is automatically chargeless and therefore by assuming sufficient decay on the initial data for $\LL_{\Omega_{ij}}F$ we are able to obtain the required estimates for the null decomposition of $\LL_{\Omega_{ij}}F.$ On the other hand, due to the finiteness of the charge, the average of $\rho$ on $\Sigma_0$ decays like $r^{-2},$ and by considering the propagation equation for $\rho$ we are able to show that this decay in fact holds uniformly in time. Combining this with the estimates for $\LL_{\Omega_{ij}}\rho$ we are able to prove the decay rate of $r^{-2}$ for $\rho$ itself outside the cone $u=-1$ (compare with $\taup^{-2}\taum^{-1/2}$ inside the cone), which is consistent with the result obtained in \cite{LS1}. {With these observations we are able to complete the proof of our estimates in $\{u\leq-1\}$ when the scalar field is compactly supported, because the equations degenerate to the free Maxwell equations there. However, when the scalar field is not compactly supported we will still have error terms involving $\rho$ (and not $\rhobar$), coming from commuting Lie derivatives with the equation for the scalar field. Here we need to use fractional Morawetz estimates for the scalar field to compensate for the loss of decay in $\rho.$ These estimates are proven under stronger initial decay assumptions on the scalar field. We should note that for the proof of our Morawetz estimates for $\phi$ (both inside and outside the null cone $u=-1$) we use the elegant argument presented by Lindblad and Sterbenz in \cite{LS1}, and that as in \cite{LS1} the double {integral on} the left hand side of the statement of Lemma \ref{fractional Morawetz lemma} is a key ingredient in our analysis in $\{u\leq-1\}.$ Besides the modification of $F$ described above, and careful exploitation of the structure of the equation suitable for analysis using the double null foliation, the other key ingredient which allows us to avoid fractional Morawetz estimates for the electromagnetic field is use of the $L^4$ Sobolev estimates from Lemma \ref{L4 Sobolev lemma}, which are taken from \cite{Shu1}.}
\subsection{Structure of the paper}
In section \ref{Notations} we introduce the notation used in the paper and record some preliminary structural results as well as standard estimates. In particular, in Subsection \ref{Diff Ops} we establish a few commutation relations which will be needed once we commute vector fields with the equations. In Subsection \ref{Energies} we introduce the energies as well as the energy momentum tensor. We then prove (following \cite{LS1}) a Morawetz estimate and derive the structure of the error terms arising in our use of the divergence theorem. In Subsection \ref{Basic Estimates} we state and prove some, mostly standard, decay and Sobolev estimates which are the technical tools for the proof of Theorem \ref{main theorem}.\\
We begin the proof of Theorem \ref{main theorem} in Section \ref{Outside V_T}, where we prove the estimates outside of the null cone $C_{-1}.$ {This section, which is where the contribution of the charge comes into play, is divided into two subsections: In {the} first subsection we consider the case of compactly supported scalar fields. Even though this is a special case of general scalar fields, we have chosen to treat it separately both to emphasize the simplifications resulting from the assumption of compact support, and to better illustrate the procedure for treating the charged {component of} the electromagnetic field. The second subsection, which is independent of the first, treats general scalar fields. Here we present the fractional Morawetz estimate for {scalar fields} which is used only in this subsection.} Once the estimates are established in this outer region, we are able to bound the flux on $C_{-1}.$ Section \ref{Inside V_T}, which is the longest section, is then devoted to the proof of the error estimates inside $C_{-1}.$ 
\section{Notation and preliminaries}\label{Notations}
{In this subsection we define the notation that is used in the remainder of this work. We have tried to stay as close as possible to the commonly used notation in the related literature. We also establish some of the basic relations that hold among the various quantities we introduce.
\subsection{Coordinates and weights}\label{Coordinates}
We use two different sets of coordinates. The rectangular coordinates are $x^0,\dots,x^3,$ but we also use $t$ for $x^0.$ When using rectangular coordinates Greek indices ($\alpha$, $\beta$, ...) can take any value between $0$ and $4$ and lower case Roman indices ($i,~j,$ ...) correspond only to spatial variables and thus take values between $1$ and $3.$ We will also use null coordinates defined as
\Aligns{
u=t-r,\quad\quad\quad\ubar=t+r.
}
In these coordinates capital Roman indices ($A,~B,$ ...) correspond to spherical variables. We also introduce the weights $$\taup^2=1+\ubar^2,\quad\quad\quad\taum^2=1+u^2.$$ The null derivatives $L$ and $\Lbar$ are defined as $$L:=\partial_t+\partial_r,\quad\quad\quad\Lbar=\partial_t-\partial_r.$$ Given a 2-form $G_{\mu\nu}$ we define its null-decomposition as
\begin{align*}
\alphabar_{A}(G):=G_{A\Lbar},\quad \alpha_{A}(G):=G_{AL},\quad \rho(G):=\frac{1}{2}G_{L\Lbar},\quad \sigma(G)\epsilon_{AB}:=G_{AB},
\end{align*}
where $\epsilon_{AB}$ is the the volume form on $S_{t,r}$ (the sphere of radius $r$ on the time-slice $x^0\equiv t,$ see below). If there is no risk of confusion we simply write $\alpha$ instead of $\alpha(G)$ and similarly for the other components.
\subsection{Regions}\label{Regions}
We denote by $\Sigma_t$ the spatial hypersurface $x^0\equiv t.$ $C_u$ and ${\Cbar}_{\ubar}$ denote the outgoing and incoming null cones $t-r\equiv u$ and $t+r\equiv\ubar$ respectively. $S_{t,r},~\tilde{S}_{u,r}$  and $\tilde{S}_{\ubar,r}$ are the spheres of constant radius $r$ on $\Sigma_t,~C_u$ and ${\Cbar}_{\ubar}$ {respectively. Due} to the presence of charge we need separate estimates in the interior and exterior of a fixed outgoing null cone. For this we introduce the following regions
\Aligns{
V_T:=\{t\leq T,~u\leq-1\},\quad\quad V^O_T:={\{t\leq T,~u\geq -1\}}.
}
We also define
\Aligns{
&\Sigma_t(T)=\Sigma_t\cap V_T,\quad\quad C_u(T)=C_u\cap V_T,\quad\quad {\Cbar}_{\ubar}(T)={\Cbar}_{\ubar}\cap V_T,\\
&{\Sigma^O_t(T)=\Sigma_t}\cap V^O_T,\quad\quad C^O_u(T)=C_u\cap V^O_T,\quad\quad {\Cbar}_{\ubar}^O(T)={\Cbar}_{\ubar}\cap V^O_T.
}
\subsection{Vector fields}\label{vector fields}
We use the following vector fields in this work
\begin{align}
&T_\mu=\partial_\mu,\label{T}\\
&S=x^\mu\partial_\mu,\label{S}\\
&\Omega_{\mu\nu}=x_\mu\partial_\nu-x_\nu\partial_\mu,\quad \mu<\nu,\label{Omega}\\
&K_0=\frac{1}{2}(u^2\bar L+\ubar^2 L),\label{K_0}\\
&\Kbar=T_0+K_0=\left(\frac{1+u^2}{2}\right)\Lbar+\left(\frac{1+\bar u^2}{2}\right)L.\label{Kbar}
\end{align}
$T_\mu,~S,$ and $\Omega_{\mu\nu}$ will be used as commutators, while $T_0$ and $K_0$ will be our multipliers. We denote the Lie algebra generated by the commutator vector-fields by $\mathbb{L}.$} We define $\omega_i$ and $\omega_i^A$ via the relation
\Align{
\partial_i=\omega_i\partial_r+\omega_i^Ae_A.
}
The following identities will be used repeatedly.
\Align{\label{vf decompositions}
&T_0=\frac{1}{2}(L+\Lbar),\\
&T_i=\frac{\omega_i}{2}(L-\Lbar)+\omega_i^Ae_A,\\
&\Omega_{ij}=\Omega^A_{ij}e_A=(x_i\omega_j^A-x_j\omega_i^A)e_A,\\
&{\Omega_{i0}}=\frac{\omega_i}{2}(\ubar L-u\Lbar)+t\omega_i^Ae_A,\\
&S=\frac{1}{2}(\ubar L+u\Lbar).
}
For the covariant derivatives we have
\begin{alignat}{3}
&\nabla_L\Lbar=0,\quad\quad\quad\quad\quad\quad&&\nabla_{\Lbar}L=0,\nonumber\\
&\nabla_{\Lbar}\Lbar=0,&&\nabla_L\Lbar=0,\nonumber\\
&\nabla_Le_A=0,&&\nabla_{\Lbar}e_A=0,\label{Ricci coefficients}\\
&\nabla_{e_A}L=\frac{1}{r}e_A,&&\nabla_{e_A}\Lbar=-\frac{1}{r}e_A,\nonumber\\
&\nabla_{e_A}e_B=\SGamma^D_{AB}e_D+\frac{1}{2r}\delta_{AB}(\Lbar-L),\nonumber
\end{alignat}
where in the last term $\SGamma^D_{AB}$ denote the Christoffel symbols of the spheres. Similarly
 \Align{\label{vf convariant der decompositions}
 &\nabla_LT_\alpha=\nabla_{\Lbar}T_\alpha=\nabla_{e_A}T_\alpha=0,\\
 &\nabla_L\Omega_{ij}=-\nabla_{\Lbar}\Omega_{ij}=\frac{1}{r}\Omega_{ij},\\
 &\nabla_{e_B}\Omega_{ij}=e_B(\Omega^A_{ij})e_A+\Omega^A_{ij}\SGamma^D_{AB}e_D+\frac{1}{2r}\Omega^B_{ij}(\Lbar-L),\\
 &\nabla_L\Omega_{i0}=\omega_iL+\omega_i^Ae_A,\\
 &\nabla_{\Lbar}\Omega_{i0}=-\omega_i\Lbar+\omega_i^Ae_A,\\
 &\nabla_{e_B}\Omega_{i0}=\frac{\omega_i^B}{2}(L+\Lbar),\\
 &\nabla_LS=L,\\
 &\nabla_{\Lbar}S=\Lbar,\\
 &\nabla_{e_A}S=e_A.
 }
Finally we have the following commutation relations
\Align{\label{vectorfield commutators}
&[L,T_0]=[\Lbar,T_0]=[e_A,T_0]=0,\\
&[L,T_i]=\omega_i^A[L,e_A]=-\frac{\omega_i^A}{r}e_A,\\
&[\Lbar,T_i]=\omega_i^A[\Lbar,e_A]=\frac{\omega_i^A}{r}e_A,\\
&[L,\Omega_{ij}]=[\Lbar,\Omega_{ij}]=0,\\
&[e_B,\Omega_{ij}]=e_B(\Omega_{ij}^A)e_A+\Omega_{ij}^A[e_B,e_A]^De_D,\\
&[L,\Omega_{i0}]=\omega_i L-\frac{u}{r}\omega_i^Ae_A,\\
&[\Lbar,\Omega_{i0}]=-\omega_i\Lbar+\frac{\ubar}{r}\omega_i^Ae_A,\\
&[e_B,\Omega_{i0}]=\frac{\omega_i^B}{2r}(\ubar L-u\Lbar)-t\omega_i^A\SGamma^D_{AB}e_D,\\
&[L,S]=L,\\
&[\Lbar,S]=\Lbar,\\
&[e_A,S]=e_A,\\
&[\Omega_{i0},\Omega_{jk}]=\delta_{i}^{j}\Omega_{0k}-\delta_{i}^{k}\Omega_{0j},\\
&[\Omega_{0i},\Omega_{0j}]=\Omega_{ij},\\
&[\Omega_{ij},\Omega_{kl}]=\delta_{j}^{k}\Omega_{il}+\delta_{j}^{l}\Omega_{ki}+\delta_{i}^{k}\Omega_{lj}+\delta_{il}\Omega_{jk},\\
&[\Omega_{0i},S]=[\Omega_{ij},S]{=0.}
}
{\subsection{Differential operators and their commutation relations}\label{Diff Ops} For a vector field $X$ we denote by $L_X$ the usual Lie derivative with respect to $X.$ Moreover, as stated earlier, $D_X$ is defined as $\nabla_X+iX^\alpha A_\alpha,$ where $\nabla_X$ is the Levi-Cevita covariant differentiation operator and $A_\alpha$ is defined by {$F_{\alpha\beta}=\nabla_\alpha A_\beta-\nabla_\beta A_\alpha.$ {Operators} on the spheres $S_{t,r}$ are written as $\snab,~\sD,$ ... .}
If $X$ is conformal Killing we denote by $\Omega_X$ its conformal factor, $L_Xg=\Omega_Xg.$ We have
\Aligns{
\Omega_{T_\mu}=\Omega_{\Omega_{\mu\nu}}=0,\quad \Omega_S=2,\quad\Omega_{\Kbar}=4t.
}
If $\phi,~J$ and $G$ are a scaler field, one-form, and two-form respectively, we define the modified Lie derivatives
\Aligns{
&\LL_X\phi=D_X\phi+\Omega_X\phi,\\
&\LL_X J=L_X J+\Omega_X J,\\
&\LL_XG=L_XG.
}
We also define $\LL$ for bundle-valued tensors by requiring it to be a derivation and that it agree with $D$ on sections. In particular we have
\Aligns{
&\LL_X D_\mu\phi:=X^\nu D_\nu D_\mu\phi+\Omega_X D_\mu\phi+\nabla_\mu X^\nu D_\nu\phi,\\
&\LL_X D_\mu D_\nu\phi:=X^\alpha D_\alpha D_\mu D_\nu\phi+\Omega_X D_\mu D_\nu\phi+\nabla_\mu X^\alpha D_\alpha D_\nu\phi+\nabla_\nu X^\alpha D_\mu D_\alpha\phi.
}
If $X_1,\dots,X_k$ are vector fields in $\{S,\Omega_{\mu\nu}\},$ and $I=(i_1,\dots,i_m)$ a multi-index we define $|I|:=m$ and
\Aligns{
\LL^{|I|}_{X_I}:=\LL_{X_{i_1}}\dots \LL_{X_{i_m}}.
}
$L^{|I|}_{X_I}$ is defined similarly. When only the number of differentiations is important (rather than the vector fields $X_i$ themselves) we will sometimes abbreviate this notation to $\LL^{|I|}$ and $L^{|I|}.$ If $I=(i_1,\dots,i_m)$ we will write $i_1,\dots,i_m\in I.$
}
The next lemmas summarize some commutation properties of the modified Lie derivative.
\begin{lemma}\label{1st commutator} Let $X$ be a conformal Killing vector field with constant conformal factor $\Omega_X$. Then
\Align{\label{[L,equations]}
&D^\mu D_\mu\LL_X\phi-\LL_XD^\mu D_\mu \phi=i\left(2X^\alpha F_{\mu\alpha}D^\mu\phi+\nabla^\mu(X^\alpha F_{\mu\alpha})\phi\right),\\
&\nabla^\mu \LL_XG_{\mu\nu}-\LL_X(\nabla^\mu G_{\mu\nu})=0,\\
&\star \LL_X G_{\mu\nu}-\LL_X\star G_{\mu\nu}=0.
}
\end{lemma}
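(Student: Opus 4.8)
\textbf{Proof plan for Lemma \ref{1st commutator}.}

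The plan is to verify the three identities separately by direct computation, exploiting the fact that $X$ is conformal Killing with \emph{constant} conformal factor $\Omega_X$, so that $\nabla_\mu \Omega_X = 0$ and $\nabla_\mu \nabla_\nu X_\alpha$ can be expressed through curvature terms that vanish on $\R^{3+1}$. The second and third identities are the ``easy'' ones: since $\LL_X G = L_X G$ for two-forms and the ordinary Lie derivative commutes with the exterior derivative, the identity $\star \LL_X G_{\mu\nu} - \LL_X \star G_{\mu\nu} = 0$ is just the statement that $L_X$ commutes with the Hodge star for a conformal Killing field acting on a two-form in four dimensions (the conformal weight of $\star$ on $2$-forms in $n=4$ is zero). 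For the divergence identity, I would write $\nabla^\mu \LL_X G_{\mu\nu} = g^{\alpha\mu}\nabla_\alpha (L_X G)_{\mu\nu} + \Omega_X \nabla^\mu G_{\mu\nu}$ and commute $L_X$ past the covariant divergence, using $L_X g^{\alpha\mu} = -\Omega_X g^{\alpha\mu}$ to cancel the $\Omega_X$ contribution precisely because $\LL_X$ on a one-form $J = \nabla^\mu G_{\mu\nu}$ carries exactly the compensating factor $+\Omega_X J$ built into its definition. This is the reason the modified Lie derivative was defined with those weights in the first place, and the computation is a short index manipulation once one recalls $\nabla_\mu X_\nu + \nabla_\nu X_\mu = \Omega_X g_{\mu\nu}$ with $\Omega_X$ constant.

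The first identity is the substantive one, and it is where I expect the main obstacle to lie. Here I would start from the definition $\LL_X \phi = D_X \phi + \Omega_X \phi$ and compute $D^\mu D_\mu (D_X \phi)$ by repeatedly commuting covariant derivatives. Writing $D_X \phi = X^\alpha D_\alpha \phi$, one gets
\Align{\label{plan expansion}
D^\mu D_\mu (X^\alpha D_\alpha \phi) = X^\alpha D^\mu D_\mu D_\alpha \phi + 2 (\nabla^\mu X^\alpha) D_\mu D_\alpha \phi + (\Box X^\alpha) D_\alpha \phi.
}
On $\R^{3+1}$, $\Box X^\alpha = 0$ for any of our conformal Killing fields (this follows from $\nabla_\mu \nabla_\nu X_\alpha$ being expressible in curvature, which vanishes), and $\nabla^\mu X^\alpha$ decomposes into its symmetric part $\tfrac12 \Omega_X g^{\mu\alpha}$ and an antisymmetric part $\pi^{\mu\alpha}$; the symmetric part contributes $\Omega_X D^\mu D_\mu \phi$, while the antisymmetric part, contracted against $D_\mu D_\alpha \phi$, picks out the commutator $[D_\mu, D_\alpha]\phi = i F_{\mu\alpha}\phi$, producing $i \pi^{\mu\alpha} F_{\mu\alpha}\phi$. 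The remaining step is to commute the gauge-covariant box past $X^\alpha D_\alpha$: $D^\mu D_\mu D_\alpha \phi = D_\alpha D^\mu D_\mu \phi + [D^\mu D_\mu, D_\alpha]\phi$, and the commutator of the gauge Laplacian with a single gauge-covariant derivative yields the characteristic $i(2 F_{\mu\alpha} D^\mu \phi + (\nabla^\mu F_{\mu\alpha})\phi)$ term by the Ricci/Bochner-type identity for line-bundle connections on flat space. Assembling these pieces, the $\Omega_X D^\mu D_\mu \phi$ terms match the $\Omega_X \phi$ shift inside $\LL_X$ on the left, the curvature-bracket term reorganizes with $\pi^{\mu\alpha}$, and what survives is exactly the stated right-hand side $i(2 X^\alpha F_{\mu\alpha} D^\mu \phi + \nabla^\mu(X^\alpha F_{\mu\alpha})\phi)$.

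The bookkeeping obstacle is making the antisymmetric-part contraction and the commutator term combine cleanly: one must be careful that $\nabla^\mu(X^\alpha F_{\mu\alpha}) = (\nabla^\mu X^\alpha) F_{\mu\alpha} + X^\alpha \nabla^\mu F_{\mu\alpha}$, and that the $(\nabla^\mu X^\alpha) F_{\mu\alpha}$ piece — whose symmetric part vanishes against the antisymmetric $F$ — is precisely the $\pi^{\mu\alpha}F_{\mu\alpha}$ term generated in \eqref{plan expansion}. Once this identification is made the proof closes. I would organize the write-up as: (1) recall $\nabla_\mu X_\nu + \nabla_\nu X_\mu = \Omega_X g_{\mu\nu}$ and $\Box X = 0$ on flat space; (2) expand $D^\mu D_\mu \LL_X \phi$ via the Leibniz/commutator steps above; (3) identify terms and conclude; (4) dispatch the two-form identities in a sentence each using $\LL_X G = L_X G$ and the conformal weights. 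The only genuine risk is sign errors in the curvature commutators, which I would pin down against the convention $[D_\mu, D_\nu]\phi = i F_{\mu\nu}\phi$ fixed implicitly by the MKG system \eqref{MKG}.
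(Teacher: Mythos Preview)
Your approach is correct and amounts to the same direct index computation as the paper's, just organized differently: the paper first isolates the intermediate identity $D_\mu\LL_X\phi=\LL_X D_\mu\phi+iX^\alpha F_{\mu\alpha}\phi$ (equation \eqref{D-LL commutator}, which it reuses in later lemmas) and then applies $D^\nu$ and contracts, whereas you expand $D^\mu D_\mu(X^\alpha D_\alpha\phi)$ via Leibniz and the symmetric/antisymmetric split of $\nabla^\mu X^\alpha$. Both routes land on the same curvature terms.

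One bookkeeping caution: your sentence ``the $\Omega_X D^\mu D_\mu\phi$ terms match the $\Omega_X\phi$ shift inside $\LL_X$ on the left'' is not quite right as stated. Your expansion of $D^\mu D_\mu\LL_X\phi$ actually produces $2\Omega_X D^\mu D_\mu\phi$ (one copy from the symmetric part of $2\nabla^\mu X^\alpha D_\mu D_\alpha\phi$, one from $D^\mu D_\mu(\Omega_X\phi)$), while the naive scalar definition $\LL_X(D^\mu D_\mu\phi)=D_X(D^\mu D_\mu\phi)+\Omega_X D^\mu D_\mu\phi$ supplies only one. The paper resolves this by (implicitly) reading $\LL_X D^\mu D_\mu\phi$ as $g^{\mu\nu}\LL_X(D_\mu D_\nu\phi)$ using the bundle-valued $2$-tensor formula, which does give $2\Omega_X D^\mu D_\mu\phi$; in any case the discrepancy is immaterial in the application since $D^\mu D_\mu\phi=0$. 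Also, in your sketch of the second identity, note that $\LL_X G=L_X G$ for two-forms, so $\nabla^\mu\LL_X G_{\mu\nu}=\nabla^\mu L_X G_{\mu\nu}$ with no extra $\Omega_X\nabla^\mu G$ term at that stage; the $\Omega_X$ appears only after commuting $L_X$ past $g^{\mu\rho}$ and is absorbed by the $+\Omega_X J$ weight in $\LL_X J$.
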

\begin{proof}
We only prove the first two statements. First note that
\Align{\label{D-LL commutator}
D_\mu\LL_X\phi&=\Omega_XD_\mu\phi+D_\mu D_X\phi=X^\alpha D_\alpha D_\mu\phi+\Omega_X D_\mu\phi+\nabla_\mu X^\alpha D_\alpha\phi+iX^\alpha F_{\mu\alpha}\phi\\
&=\LL_X D_\mu\phi+iX^\alpha F_{\mu\alpha}\phi.
}
It follows that
\Aligns{
{D_\nu D_\mu}\LL_X\phi=&\nabla_\nu X^\alpha D_\alpha D_\mu\phi+X^\alpha D_\alpha D_\nu D_\mu\phi+iX^\alpha F_{\nu\alpha}D_\mu\phi+\Omega_XD_\nu D_\mu\phi\\
&+\nabla_\nu\nabla_\mu X^\alpha D_\alpha\phi+\nabla_\mu X^\alpha D_\nu D_\alpha\phi+i\nabla_\nu(X^\alpha F_{\mu\alpha})\phi+iX^\alpha F_{\mu\alpha}D_\nu\phi.
}
Since $\nabla^2 X=0$ for the vector-fields under consideration, the first statement of the lemma follows by contracting in the $\mu$ and $\nu$ coordinates. For the second statement note that
\Aligns{
\nabla_\rho\LL_XG_{\mu\nu}&=\nabla_\rho X^\alpha\nabla_\alpha G_{\mu\nu}+X^\alpha\nabla_\alpha\nabla_\rho G_{\mu\nu}+\nabla_\rho\nabla_\mu X^\alpha G_{\alpha\nu}+\nabla_\mu X^\alpha\nabla_\rho G_{\alpha\nu}\\
&\quad+\nabla_\rho\nabla_\nu X^\alpha G_{\mu\alpha}+\nabla_\nu X^\alpha\nabla_\rho G_{\mu\alpha}\\
&=X^\alpha\nabla_\alpha\nabla_\rho G_{\mu\nu}+\nabla_\rho X^\alpha \nabla_\alpha G_{\mu\nu}+\nabla_\mu X^\alpha \nabla_\rho G_{\alpha\nu}+\nabla_\nu X^\alpha\nabla_\rho G_{\mu\alpha}\\
&=L_X\nabla_\rho G_{\mu\nu}.
}
Noting that $L_X g^{\mu\rho}=-\Omega_X g^{\mu\rho}$ we get
\Aligns{
\nabla^{\mu}\LL_X G_{\mu\nu}=L_X(\nabla^\mu G_{\mu\nu})+\Omega_X\nabla^\mu G_{\mu\nu}=\LL_X(\nabla^\mu G_{\mu\nu}).
}
\end{proof}
\begin{lemma}\label{second derivative commutation} Let $X$ and $Y$ be conformal Killing vectorfields with constant conformal factors. Then
\Aligns{
D^\mu D_\mu\LL_Y\LL_X\phi-\LL_Y\LL_XD^\mu D_\mu\phi=&i\left(2Y^\beta F_{\mu\beta}D^\mu\LL_X\phi+\nabla^\mu(Y^\beta F_{\mu\beta})\LL_X\phi\right)\\
&+i\left(2X^\alpha F_{\mu\alpha}D^\mu\LL_Y\phi+\nabla^\mu(X^\alpha F_{\mu\alpha})\LL_Y\phi\right)\\
&+i\left(2[Y,X]^\alpha F_{\mu\alpha}D^\mu\phi+\nabla^\mu([Y,X]^\alpha F_{\mu\alpha})\phi\right)\\
&+i\left(2X^\alpha\LL_YF_{\mu\alpha}D^\mu\phi+\nabla^\mu (X^\alpha\LL_Y F_{\mu\alpha})\phi\right)\\
&+2X^\alpha Y^\beta F^\mu_{~\alpha}F_{\mu\beta}\phi.
}
\end{lemma}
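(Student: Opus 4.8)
The statement is the second-order analogue of Lemma \ref{1st commutator}, so the natural strategy is to iterate that lemma. Write $D^\mu D_\mu$ for the covariant wave operator $\Box_D$, and let $\mathcal{C}_X(\phi):=i\left(2X^\alpha F_{\mu\alpha}D^\mu\phi+\nabla^\mu(X^\alpha F_{\mu\alpha})\phi\right)$ denote the commutator defect from the first statement of Lemma \ref{1st commutator}, so that $\Box_D\LL_X\phi=\LL_X\Box_D\phi+\mathcal{C}_X(\phi)$. Applying $\LL_Y$ to both sides and using Lemma \ref{1st commutator} once more (now with the field $\LL_X\phi$ in place of $\phi$), one gets
\[
\Box_D\LL_Y\LL_X\phi=\LL_Y\LL_X\Box_D\phi+\LL_Y\bigl(\mathcal{C}_X(\phi)\bigr)+\mathcal{C}_Y(\LL_X\phi).
\]
The term $\mathcal{C}_Y(\LL_X\phi)=i\left(2Y^\beta F_{\mu\beta}D^\mu\LL_X\phi+\nabla^\mu(Y^\beta F_{\mu\beta})\LL_X\phi\right)$ is exactly the first line of the claimed right-hand side. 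So the whole content of the lemma is to expand $\LL_Y\bigl(\mathcal{C}_X(\phi)\bigr)$ and identify it with the remaining four lines. This is the step I would spend the real effort on.

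\textbf{Expanding $\LL_Y\mathcal{C}_X(\phi)$.} Since $\mathcal{C}_X(\phi)$ is a section of the line bundle (a scalar-type object built from $\phi$, $F$, $X$), the modified Lie derivative $\LL_Y$ acts on it as a derivation, distributing over the product. The key facts to use are: (a) $\LL_Y$ acting on a section is $D_Y+\Omega_Y$; (b) the Leibniz property of $\LL_Y$ with respect to contractions of the bundle-valued two-form $F$ against vector fields, together with $\LL_Y F_{\mu\alpha}=L_Y F_{\mu\alpha}$; (c) the commutator identity $\LL_Y D^\mu\phi = D^\mu\LL_Y\phi - i Y^\beta F^{\mu}{}_{\beta}\phi$ type relations — more precisely the analogue of \eqref{D-LL commutator} with indices raised, which produces the curvature correction terms; and (d) $\LL_Y(X^\alpha)$-type terms combine via $\nabla_\mu Y^\alpha$ into Lie brackets: for conformal Killing $X,Y$ with constant conformal factors one has $\LL_Y X = [Y,X]$ and the corresponding identity $\LL_Y(X^\alpha F_{\mu\alpha})=[Y,X]^\alpha F_{\mu\alpha}+X^\alpha\LL_Y F_{\mu\alpha}$ (up to conformal-factor contributions that are absorbed by the modified Lie derivative's normalization). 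Carrying this out, the term where $\LL_Y$ hits the vector field $X$ yields the third line $i\left(2[Y,X]^\alpha F_{\mu\alpha}D^\mu\phi+\nabla^\mu([Y,X]^\alpha F_{\mu\alpha})\phi\right)$; the term where it hits $F$ yields the fourth line $i\left(2X^\alpha\LL_YF_{\mu\alpha}D^\mu\phi+\nabla^\mu (X^\alpha\LL_Y F_{\mu\alpha})\phi\right)$; the term where it hits $D^\mu\phi$ or $\phi$ itself produces two pieces — a $\LL_Y\phi$ piece that assembles into the second line $i\left(2X^\alpha F_{\mu\alpha}D^\mu\LL_Y\phi+\nabla^\mu(X^\alpha F_{\mu\alpha})\LL_Y\phi\right)$, and a genuinely new curvature-squared piece coming from the $-iY^\beta F^\mu{}_\beta\phi$ correction in $\LL_Y D^\mu\phi$, which contributes $i\cdot 2X^\alpha F_{\mu\alpha}\cdot(-iY^\beta F^\mu{}_\beta)\phi=2X^\alpha Y^\beta F_{\mu\alpha}F^\mu{}_\beta\phi$, i.e. the last line. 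One must check that the conformal-factor terms $\Omega_Y$ generated at each stage cancel consistently — this is where the normalization of $\LL$ (chosen precisely so that $\LL_X$ commutes nicely with $\Box_D$) does its job — and that the various $\nabla^\mu$ applied to the product $X^\alpha F_{\mu\alpha}$ reassemble correctly after integrating the Leibniz rule, using $\nabla^2 X=0$.

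\textbf{Main obstacle.} I expect the bookkeeping in expanding $\LL_Y\mathcal{C}_X(\phi)$ to be the crux: there are several places where a curvature correction of the form $iZ^\gamma F\cdot$ appears when $\LL_Y$ commutes past a covariant derivative, and one must be careful to track which of these combine into the $\LL_Y\phi$ terms of line two, which combine into the $\LL_Y F$ terms of line four, and which survive as the irreducible quadratic-in-$F$ term $2X^\alpha Y^\beta F^\mu{}_\alpha F_{\mu\beta}\phi$ on the last line. A secondary but routine point is verifying $\LL_Y(X^\alpha F_{\mu\alpha})$ splits as claimed; this follows from $L_Y X=[Y,X]$, the derivation property, and the fact that $X,Y$ being conformal Killing with constant conformal factors forces $[Y,X]$ to again be (conformal) Killing, so no exotic terms appear. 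Everything else — the first application of Lemma \ref{1st commutator}, the identity \eqref{D-LL commutator}, and the vanishing of second covariant derivatives of the relevant vector fields — is already available in the excerpt, so the proof is a controlled, if lengthy, computation.
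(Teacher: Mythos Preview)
Your proposal is correct and follows essentially the same route as the paper: apply Lemma~\ref{1st commutator} once with $\LL_X\phi$ in place of $\phi$ to produce the first line, then compute $\LL_Y$ of the first-order defect $\mathcal{C}_X(\phi)$ and regroup. The paper carries out the second step by writing $\LL_Y\mathcal{C}_X(\phi)$ as a raw expansion in $g^{\mu\nu}[\cdots]$ and then identifying the $\LL_Y\phi$, $[Y,X]$, $\LL_Y F$, and $F\cdot F$ pieces exactly as you outline; your Leibniz-rule phrasing is just a more structured description of that same computation.
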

\begin{proof}
From Lemma \ref{1st commutator} we have
\Align{\label{2nd der com 1}
D^\mu D_\mu\LL_Y\LL_X\phi-\LL_Y D^\mu D_\mu\LL_X\phi=i\left(2Y^\nu F_{\mu\nu}D^\mu\LL_X\phi+\nabla^\mu(Y^\nu F_{\mu\nu})\LL_X\phi\right).
}
Again applying Lemma \ref{1st commutator} to $\LL_Y D^\mu D_\mu\LL_X\phi$ we get
\Aligns{
\LL_Y D^\mu D_\mu\LL_X\phi-\LL_Y\LL_X D^\mu D_\mu\phi&=ig^{\mu\nu}\left[2Y^\beta\nabla_\beta X^\alpha F_{\mu\alpha}D_\nu\phi+2X^\alpha Y^\beta\nabla_\beta F_{\mu\alpha}D_\nu\phi\right.\\
&\quad\quad\quad+2X^\alpha\nabla_\mu Y^\beta F_{\beta\alpha}D_\nu\phi+2X^\alpha Y^\beta F_{\mu\alpha}D_\nu D_\beta\phi\\
&\quad\quad\quad+2iX^\alpha Y^\beta F_{\mu\alpha}F_{\beta\nu}\phi+2X^\alpha F_{\mu\alpha}D_\nu\Omega_Y\phi\\
&\quad\quad\quad+2X^\alpha F_{\mu\alpha}\nabla_\nu Y^\beta D_\beta \phi\\
&\quad\quad\quad+Y^\beta\nabla_\beta\nabla_\nu(X^\alpha F_{\mu\alpha})\phi+\nabla_\nu Y^\beta\nabla_\beta(X^\alpha F_{\mu\alpha})\phi\\
&\quad\quad\quad\left. +\nabla_\nu(X^\alpha F_{\mu\alpha})Y^\beta D_\beta\phi+\nabla_\nu(X^\alpha F_{\mu\alpha})\Omega_Y\phi\right]\\
&=i\left(2X^\alpha F_{\mu\alpha}D^\mu\LL_Y\phi+\nabla^\mu(X^\alpha F_{\mu\alpha})\LL_Y\phi\right)\\
&\quad+i\left(2[Y,X]^\alpha F_{\mu\alpha}D^\mu\phi+\nabla^\mu([Y,X]^\alpha F_{\mu\alpha})\phi\right)\\
&\quad+i\left(2X^\alpha\LL_YF_{\mu\alpha}D^\mu\phi+\nabla^\mu (X^\alpha\LL_Y F_{\mu\alpha})\phi\right)\\
&\quad+2X^\alpha Y^\beta F^\mu_{~\alpha}F_{\mu\beta}\phi.
}
\end{proof}
\begin{lemma}\label{kth commutator}
For $X_1,\dots,X_k,~k\geq2,$ conformal Killing vector-fields in {$\mathbb{L},$}
\Aligns{
D^\mu D_\mu \LL^k_{X_1,\dots,X_k}\phi-\LL^k_{X_1,\dots,X_k}D^\mu D_\mu\phi=A+B,
}
where $A$ is a linear combination (with coefficients zero or one) of terms of the form
\Aligns{
i\left(2L_{X_{I_1}}^{|I_1|}X_{i_1}^\alpha \LL_{X_{I_2}}^{|I_2|}F_{\mu\alpha} D^\mu\LL_{X_{I_3}}^{|I_3|}\phi+\nabla^\mu\left(L_{X_{I_1}}^{|I_1|}X_{i_1}^\alpha\LL_{X_{I_2}}^{|I_2|}F_{\mu\alpha}\right)\LL_{X_{I_3}}^{|I_3|}\phi\right),\quad &|I_1|+|I_2|+|I_3|=k-1,\\&i_1\notin I_1,
}
and $B$ is a linear combination (with coefficients zero or one) of terms of the form
\Aligns{
2L_{X_{I_1}}^{|I_1|}X_{i_1}^\alpha L_{X_{I_2}}^{|I_2|}X_{i_2}^\beta\LL_{X_{I_3}}^{|I_3|}F_{\mu\alpha}\LL_{X_{I_4}}^{|I_4|}F^\mu_{~\beta}\LL_{X_{I_5}}^{|I_5|}\phi,\quad |I_1|+\dots+|I_5|=k-2,~i_j\notin I_j.
}
\end{lemma}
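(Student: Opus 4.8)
The plan is to prove Lemma \ref{kth commutator} by induction on $k$, using Lemmas \ref{1st commutator} and \ref{second derivative commutation} as the base cases ($k=1,2$) and then peeling off one Lie derivative at a time. Write $\Box_A := D^\mu D_\mu$ for the covariant wave operator. Suppose the claimed structural statement holds for $k-1$ derivatives, so that
\[
\Box_A \LL^{k-1}_{X_2,\dots,X_k}\phi - \LL^{k-1}_{X_2,\dots,X_k}\Box_A\phi = A' + B',
\]
where $A'$ and $B'$ are linear combinations of the indicated schematic forms with total index count $k-2$ and $k-3$ respectively. Now apply $\LL_{X_1}$ to this identity and commute $\LL_{X_1}$ past $\Box_A$ on the first term on the left using Lemma \ref{1st commutator} (with $X = X_1$): this produces $\LL_{X_1}\Box_A \psi = \Box_A \LL_{X_1}\psi - i(2X_1^\alpha F_{\mu\alpha}D^\mu\psi + \nabla^\mu(X_1^\alpha F_{\mu\alpha})\psi)$ applied to $\psi = \LL^{k-1}_{X_2,\dots,X_k}\phi$, which directly contributes a term of the $A$-type with $I_1 = \emptyset$, $I_2 = \emptyset$, $I_3 = (2,\dots,k)$.

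The second step is to track what happens when $\LL_{X_1}$ hits each schematic summand of $A'$ and $B'$. I would establish and use the Leibniz-type rules for $\LL_X$ acting on products and on the quantities $L^{|I|}_{X_I}X_{i}^\alpha$, $\LL^{|I|}_{X_I}F_{\mu\alpha}$, $\LL^{|I|}_{X_I}\phi$, and on the contracted covariant derivative $\nabla^\mu(\,\cdot\,)$. The key structural inputs are: (a) $\LL_{X_1}$ applied to $\LL^{|I_2|}_{X_{I_2}}F_{\mu\alpha}$ gives $\LL^{|I_2|+1}$ of $F$ (absorbing $X_1$ into the multi-index); (b) $\LL_{X_1}$ applied to $L^{|I_1|}_{X_{I_1}}X_{i_1}^\alpha$ either augments that multi-index or, via the commutation relations \eqref{vectorfield commutators} for vector fields in $\mathbb{L}$, produces $L^{|I_1|}$ of a bracket $[X_1, X_{i_1}]^\alpha$ which is again one of the vector fields in $\mathbb{L}$ (this is why the hypothesis $X_i \in \mathbb{L}$ is essential and why $\nabla^2 X = 0$ is used — the conformal factors are constant so no extra inhomogeneous terms appear); (c) $\LL_{X_1}$ applied to $\LL^{|I_3|}_{X_{I_3}}\phi$ gives $\LL^{|I_3|+1}$ of $\phi$. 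In every case the total index count goes from $k-2$ to $k-1$ for $A$-type terms and from $k-3$ to $k-2$ for $B$-type terms, matching the claim. The disjointness conditions $i_j \notin I_j$ are preserved because $X_1$ is a fresh label prepended to exactly one of the multi-indices in each term.

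The one genuinely new phenomenon is the cross term: when $\LL_{X_1}$ differentiates the factor $\LL^{|I_2|}_{X_{I_2}}F_{\mu\alpha}$ inside an $A$-type term, the second-derivative commutation computation in Lemma \ref{second derivative commutation} shows that a quadratic-in-$F$ term is generated (the $2X^\alpha Y^\beta F^\mu_{\ \alpha}F_{\mu\beta}\phi$ term there). More precisely, tracing through how $\LL_{X_1}$ interacts with $D^\mu\LL^{|I_3|}_{X_{I_3}}\phi$ produces, via \eqref{D-LL commutator}, an extra factor $iX_1^\beta F_{\mu\beta}$, and pairing this with the existing $F$ factor yields a term of $B$-type with an extra Lie derivative distributed. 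I would organize this by noting that the passage from $k-1$ to $k$ in the $A$-part is governed exactly by the identity in Lemma \ref{1st commutator}, while the new $B$-contributions and the relabeling of existing $B$-terms are governed by the extra $iX_1^\beta F_{\mu\beta}$ pieces appearing in \eqref{D-LL commutator} and in the manipulations of Lemma \ref{second derivative commutation}.

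The main obstacle — or rather the only part requiring care rather than cleverness — is bookkeeping: verifying that after all the Leibniz expansions, every term that arises fits one of the two schematic templates with the correct total index count and the correct disjointness of multi-indices, and that no term with a "bare" $\Omega_X$ or $\nabla^2 X$ survives (these vanish because the $X_i \in \mathbb{L}$ are Killing or have constant conformal factor). I would present this cleanly by first stating the three Leibniz/commutation sublemmas for $\LL_{X_1}$ acting on the building blocks, then observing that the base cases $k=1,2$ are exactly Lemmas \ref{1st commutator} and \ref{second derivative commutation} rewritten in the schematic notation, and finally writing the inductive step as a one-line application of those sublemmas to each summand, with the understanding that "linear combination with coefficients zero or one" absorbs the combinatorics of which multi-index $X_1$ gets prepended to. Since the statement only asserts the \emph{form} of the terms (not precise coefficients), the induction closes without needing to track signs or multiplicities.
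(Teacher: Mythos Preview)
Your proposal is correct and is essentially the same approach as the paper's: the paper's entire proof reads ``The proof is the same as in the previous lemma,'' which is shorthand for exactly the inductive computation you outline (peel off one $\LL_{X_1}$, use Lemma~\ref{1st commutator} to commute it past $\Box_A$, and distribute it over the terms of $A'$ and $B'$ via the Leibniz rule and \eqref{D-LL commutator}). Your write-up is simply a more explicit version of what the paper leaves implicit.
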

{\begin{proof}
The proof is the same as in the previous lemma.
\end{proof}
The following technical lemma will be used in showing that $\taup D_L\phi$ and $\taum D_{\Lbar} \phi$ have the same decay as $\phi.$}
\begin{lemma}\label{KS commutator estimates 1}{The following commutation estimates hold for any scalar field $\phi.$
\begin{align}
&\left|\LL_{S}(\taum D_{\Lbar}\phi)-\taum D_{\Lbar}\LL_{S}\phi\right| \lesssim \taum |D_{\Lbar}\phi|+\taum\taup|\rho||\phi|\label{Sob-Klain com1}\\
&\left|\LL_{S}(\taup D_{L}\phi)-\taup D_{L}\LL_{S}\phi\right| \lesssim \taup|D_{L}\phi|+\taum\taup|\rho||\phi|\label{Sob-Klain com2}\\
&\left|\LL_{\Omega_{ij}}(\taum D_{\Lbar}\phi)-\taum D_{\Lbar}\LL_{\Omega_{ij}}\phi\right|\lesssim\taum\taup|\alphabar||\phi|+\taum|\slashed{D}\phi|+|D_{\Lbar}\phi|\label{Sob-Klain com3}\\
&\left|\LL_{\Omega_{ij}}(\taup D_{L}\phi)-\taup D_{L}\LL_{\Omega_{ij}}\phi\right|\lesssim\taup^2|\alpha||\phi|+\taup|\slashed{D}\phi|+|D_{L}\phi|\label{Sob-Klain com4}\\
&\left|\LL_{\Omega_{i0}}(\taum D_{\Lbar}\phi)-\taum D_{\Lbar}\LL_{\Omega_{i0}}\phi\right|\lesssim\taum |D_{\Lbar}\phi|+\taum\taup(|\alphabar|+|\rho|)|\phi|+\taum|\slashed{D}\phi|\label{Sob-Klain com5}\\
&\left|\LL_{\Omega_{i0}}(\taup D_{L}\phi)-\taup D_{L}\LL_{\Omega_{i0}}\phi\right|\lesssim\taup |D_L\phi|+(\taup^2|\alpha|+\taup\taum|\rho|)|\phi|+\taup|\slashed{D}\phi|\label{Sob-Klain com6}\\
&|\LL_{\partial_{0}}(\taum D_{\Lbar}\phi)-\taum D_{\Lbar}\LL_{\partial_{0}}\phi|\lesssim \taum|\rho||\phi|+|D_{\Lbar}\phi|\label{Sob-Klain com8}\\
&{|\LL_{\partial_{0}}}(\taup D_{L}\phi)-\taup D_{L}\LL_{\partial_{0}}\phi|\lesssim \taup|\rho||\phi|+|D_{L}\phi|\label{Sob-Klain com9}\\
&|\LL_{\partial_{i}}(\taum D_{\Lbar}\phi)-\taum D_{\Lbar}\LL_{\partial_{i}}\phi|\lesssim \taum|\rho||\phi|+|D_{\Lbar}\phi|+\taum|\slashed{D}\phi|+\frac{1}{r}|D_{\Lbar}\phi|\label{Sob-Klain com10}\\
&|\LL_{\partial_{i}}(\taup D_{L}\phi)-\taup D_{L}\LL_{\partial_{i}}\phi|\lesssim \taup|\rho||\phi|+|D_{L
}\phi|+\taup|\slashed{D}\phi|+\frac{1}{r}{|D_{L}\phi|.}\label{Sob-Klain com11}
\end{align}}
Moreover if ${\Gamma_1,\Gamma_2\in\mathbb{L}}$ then
\Align{\label{Sob-Klain com7}
&\left|\LL_{\Gamma_1}\LL_{\Gamma_2}(\taum D_{\Lbar}\phi)-\taum D_{\Lbar}\LL_{\Gamma_1}\LL_{\Gamma_2}\phi\right|+\left|\LL_{\Gamma_1}\LL_{\Gamma_2}(\taup D_{L}\phi)-\taup D_{L}\LL_{\Gamma_1}\LL_{\Gamma_2}\phi\right|\\
&\quad\lesssim\taum\left(|D_{\Lbar}\phi|+\sum_{\Gamma\in\{S,\Omega_{\mu\nu}\}}|D_{\Lbar}\LL_{\Gamma}\phi|\right)+\taup\left(|D_{L}\phi|+\sum_{\Gamma\in\{S,\Omega_{\mu\nu}\}}|D_{L}\LL_{\Gamma}\phi|\right)\\
&\quad\quad+\left(\sum_{\stackrel{|I|\leq1}{\Gamma\in\{S,\Omega_{\mu\nu}\}}}\frac{\taup}{r}|\LL^I_\Gamma\phi|\right)\left(\taup\taum|\alphabar(F)|+\taup^2\left(|\alpha(F)|+|\rho(F)|+|\sigma(F)|\right)\right)\\
&\quad\quad+|\phi| \left(\sum_{\Gamma\in\{S,\Omega_{\mu\nu}\}}\left(\taup\taum|\alphabar(\LL_\Gamma F)|+\taup^2\left(|\alpha(\LL_\Gamma F)|+|\rho(\LL_\Gamma F)|+|\sigma(\LL_\Gamma F)|\right)\right)\right).
} 
\end{lemma}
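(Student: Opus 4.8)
The plan is to reduce everything to the single-vector-field estimates \eqref{Sob-Klain com1}--\eqref{Sob-Klain com11}, which I would establish first by a direct computation. The basic mechanism is the identity \eqref{D-LL commutator}, namely $D_\mu\LL_X\phi=\LL_X D_\mu\phi+iX^\alpha F_{\mu\alpha}\phi$; contracting this with a null frame vector (and being careful that $L,\Lbar$ are \emph{not} parallel along themselves in the $e_A$ directions) produces the commutator $\LL_X(D_{\Lbar}\phi)-D_{\Lbar}\LL_X\phi$ up to (a) a curvature term $iX^\alpha F_{\Lbar\alpha}\phi$ and (b) connection terms coming from $[\Lbar,X]$ and from $\nabla_{\Lbar}X$. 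For each commutator $\Gamma\in\{S,\Omega_{ij},\Omega_{i0},\partial_0,\partial_i\}$ I would read off the relevant null components of $X^\alpha F_{\cdot\alpha}$ from the decompositions \eqref{vf decompositions}: e.g. for $S=\tfrac12(\ubar L+u\Lbar)$ one gets $S^\alpha F_{\Lbar\alpha}=\tfrac{\ubar}{2}F_{\Lbar L}=-\ubar\,\rho$, which after multiplying by the weight $\taum$ and using $\ubar\lesssim\taup$ gives the term $\taum\taup|\rho||\phi|$ in \eqref{Sob-Klain com1}; for $\Omega_{ij}=\Omega^A_{ij}e_A$ one gets $\Omega_{ij}^\alpha F_{\Lbar\alpha}=\Omega^A_{ij}\,\alphabar_A$ with $|\Omega^A_{ij}|\lesssim r\lesssim\taup$, producing $\taum\taup|\alphabar||\phi|$; and so on. The connection/commutator contributions are handled using \eqref{Ricci coefficients}, \eqref{vf convariant der decompositions} and \eqref{vectorfield commutators}: these never involve $F$ and only produce terms like $\taum|D_{\Lbar}\phi|$, $\taum|\sD\phi|$, $\tfrac1r|D_{\Lbar}\phi|$, matching the stated right-hand sides. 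One also has to commute the scalar weights $\taum=(1+u^2)^{1/2}$ and $\taup=(1+\ubar^2)^{1/2}$ past the vector fields, but $|X(\taup)|\lesssim\taup$ and $|X(\taum)|\lesssim\taum$ for all $X\in\mathbb{L}$ (and $\partial_0,\partial_i$), so this contributes only the harmless terms already present.

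For the two-derivative estimate \eqref{Sob-Klain com7} the plan is to iterate. Write $\Psi_{\Lbar}:=\taum D_{\Lbar}\phi$. Then
\[
\LL_{\Gamma_1}\LL_{\Gamma_2}\Psi_{\Lbar}-\taum D_{\Lbar}\LL_{\Gamma_1}\LL_{\Gamma_2}\phi
=\LL_{\Gamma_1}\!\bigl(\LL_{\Gamma_2}\Psi_{\Lbar}-\taum D_{\Lbar}\LL_{\Gamma_2}\phi\bigr)
+\bigl(\LL_{\Gamma_1}(\taum D_{\Lbar}(\LL_{\Gamma_2}\phi))-\taum D_{\Lbar}\LL_{\Gamma_1}\LL_{\Gamma_2}\phi\bigr).
\]
The second bracket is exactly a single-derivative commutator applied to the scalar field $\LL_{\Gamma_2}\phi$, so by the already-established \eqref{Sob-Klain com1}--\eqref{Sob-Klain com6} it is bounded by $\taum|D_{\Lbar}\LL_{\Gamma_2}\phi|+\taum\taup(|\alphabar(F)|+|\rho(F)|+|\sigma(F)|)\tfrac{\taup}{r}|\LL_{\Gamma_2}\phi|+\dots$, which is absorbed into the claimed right-hand side (using $|\LL_{\Gamma_2}\phi|\lesssim\tfrac{\taup}{r}|\LL^I_\Gamma\phi|$-type bounds and the fact that $\Gamma_2\in\{S,\Omega_{\mu\nu}\}$ contributes to the sums there). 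For the first bracket I apply $\LL_{\Gamma_1}$ to the single-derivative commutator identity for $\Gamma_2$; $\LL_{\Gamma_1}$ falls either on a factor of $F$ (raising it to $\LL_{\Gamma_1}F$, which accounts for the last line of \eqref{Sob-Klain com7}), on a factor of $\phi$ or $D\phi$ (producing $|D_{\Lbar}\LL_{\Gamma_1}\phi|$, $|\sD\LL_{\Gamma_1}\phi|$, etc., and at worst an extra $[\Gamma_1,\Gamma_2]\in\mathbb{L}$ which is again in the span of $\{S,\Omega_{\mu\nu},\partial\}$), or on one of the geometric coefficients $\omega_i,\omega_i^A,\Omega^A_{ij}$, whose $\LL_{\Gamma_1}$-derivatives are bounded by comparable weights by \eqref{vf convariant der decompositions}. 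Collecting terms and using $r\lesssim\taup$, $|u|\lesssim\taum$, $\ubar\lesssim\taup$ repeatedly gives the stated bound; the estimates for $\taup D_L\phi$ are identical with $\Lbar\leftrightarrow L$, $\taum\leftrightarrow\taup$.

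The main obstacle, I expect, is purely bookkeeping: keeping track of which weight ($\taup$ or $\taum$) each structure constant and each component of $F$ is naturally paired with, so that no term is over- or under-weighted. In particular one must be careful that $S$, $\Omega_{i0}$ and $\partial_i$ all have components along \emph{both} $L$ and $\Lbar$, so contracting $\Gamma^\alpha F_{\Lbar\alpha}$ can pick up $\rho$ (from the $L$-component) as well as $\alphabar$, which is why \eqref{Sob-Klain com5} and \eqref{Sob-Klain com6} carry both $|\alphabar|$ and $|\rho|$. A second delicate point is that the weight $\taum$ (resp. $\taup$) is adapted to the $\Lbar$ (resp. $L$) derivative but not the other, so e.g. $\Omega_{ij}(\taum)$ is genuinely of size $\taum$ and not smaller, and one should not expect cancellation there — this is already reflected in the $\taum|\sD\phi|$ and $\tfrac1r|D_{\Lbar}\phi|$ terms. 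Once the single-derivative identities are written out carefully, the two-derivative case is a mechanical but lengthy expansion, and no new cancellation is needed, which is why it can be stated as following "the same" pattern.
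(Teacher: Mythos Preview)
Your approach is correct and essentially the same as the paper's: both derive the explicit first-order identity
\[
\LL_{\Gamma}(\taum D_{\Lbar}\phi)-\taum D_{\Lbar}\LL_{\Gamma}\phi=\Gamma(\taum)D_{\Lbar}\phi+i\taum\,\Gamma^\nu F_{\nu\Lbar}\phi+\taum\,[\Gamma,\Lbar]^\nu D_\nu\phi
\]
(or the variant with $\nabla_\Gamma\Lbar$ in place of $[\Gamma,\Lbar]$), read off the null components of $\Gamma^\nu F_{\nu\Lbar}$ from \eqref{vf decompositions}, and then iterate for two derivatives. Your telescoping decomposition for the second-order case is just a repackaging of the paper's direct expansion of $\LL_{\Gamma_2}\LL_{\Gamma_1}(\taum D_{\Lbar}\phi)$; in particular, the paper also singles out the term $\LL_{\Gamma_2}(\Gamma_1^\nu F_{\nu\Lbar}\phi)$ and handles it via the Lie-derivative identity \eqref{G2ofG1F-1}, which is exactly your ``$\LL_{\Gamma_1}$ falls on a factor of $F$'' step. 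One small correction: $\Omega_{ij}(\taum)=0$ since $\Omega_{ij}$ is tangent to the spheres of constant $u$, so that particular term is absent rather than ``genuinely of size $\taum$''; this does not affect the argument.
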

{\begin{proof}
First we note that if $\Gamma_i,~i=1,2,$ are two conformal Killing vector fields whose conformal factors $\Omega_i$ are constants, then
\Aligns{
&\LL_{\Gamma_1}(\taum D_{\Lbar}\phi)=\taum D_{\Lbar}\LL_{\Gamma_1}\phi+\Gamma_1(\taum)D_{\Lbar}\phi
+i\taum \Gamma_1^\nu F_{\nu\Lbar}\phi
+\taum\nabla_{\Gamma_{1}}\Lbar^{\nu}D_{\nu}\phi,\\
&\LL_{\Gamma_1}(\taup D_{L}\phi)=\taup D_{L}\LL_{\Gamma_1}\phi+\Gamma_1(\taup) D_{L}\phi+i\taup \Gamma_1^\nu F_{\nu L}\phi+\taup \nabla_{\Gamma_{1}}L^{\nu}D_{\nu}\phi,
}
and
\begin{align*}
&\LL_{\Gamma_{2}}\LL_{\Gamma_{1}}(\taum D_{\Lbar}\phi)\\
&=\Gamma_{2}\Gamma_{1}(\taum)D_{\Lbar}\phi+\Gamma_{1}(\taum)\LL_{\Gamma_{2}}D_{\Lbar}\phi+\Gamma_{2}(\taum)\LL_{\Gamma_{1}}D_{\Lbar}\phi\\
&+\taum\LL_{\Gamma_{2}}\left(D_{\Lbar}\LL_{\Gamma_{1}}\phi+i\Gamma_{1}^{\nu}F_{\nu\Lbar}\phi+\nabla_{\Gamma_{1}}\Lbar^{\nu}D_{\nu}\phi\right)=I+II
\end{align*}
$I$ can be treated as the first order commutators. Since we have either $\nabla_{\Gamma_{1}}\Lbar=0$ or $\nabla_{\Gamma_{1}}\Lbar\sim\Lbar$, the last term in $II$ is either $0$ or $\Lbar^{\nu}\LL_{\Gamma_{2}}D_{\nu}\phi$, and can be seen to have the right behavior. The second term in $II$ is
\begin{align*}
i\taum\LL_{\Gamma_{2}}\left(\Gamma_{1}^{\nu}F_{\nu\Lbar}\phi\right)+i\LL_{\Gamma_{2}}(\taum)\Gamma_{1}^{\nu}F_{\nu\Lbar}\phi.
\end{align*} 
The second term of the above has the right behavior. For the first term, we note that since the usual and modified Lie derivatives agree for two-forms
\Align{\label{G2ofG1F-1}
\Gamma_2^\mu\nabla_\mu(\Gamma_1^\nu F_{\nu\Lbar})=\Gamma_1^\nu\LL_{\Gamma_2}F_{\nu\Lbar}+[\Gamma_2,\Gamma_1]^\nu F_{\nu\Lbar}+\Gamma_1^\nu F_{\nu\mu}[\Gamma_2,\Lbar]^\mu
}
and
\Align{\label{G2ofG1F-2}
\Gamma_2^\mu\nabla_\mu(\Gamma_1^\nu F_{\nu L})=\Gamma_1^\nu\LL_{\Gamma_2}F_{\nu L}+[\Gamma_2,\Gamma_1]^\nu F_{\nu L}+\Gamma_1^\nu F_{\nu\mu}[\Gamma_2,L]^\mu.
}
Since $[\Gamma_{1},\Gamma_{2}]\sim\Omega_{ij}, \taup L, \taum\Lbar$ and $[\Gamma,\Lbar]\sim \Lbar,0$ and $ [\Gamma,L]\sim L,0$, the contribution from these terms are also of the right order. 
\end{proof}}
{\subsection{Energies}\label{Energies} To a two-form $G$ we associate the following energy momentum tensor}
\EQ{\label{QG}
T(G)_{\mu\nu}=G_{\mu\alpha}G_\nu^{~\alpha}+\star G_{\mu\alpha}\star G_\nu^{~\alpha}.
}
Similarly for a scalar field $\phi$ we define
\EQ{\label{QPHI}
T(\phi)_{\mu\nu}=\Re(\con{D_\mu\phi}D_\nu\phi)-\frac{1}{2}g_{\mu\nu}\con{D^\alpha\phi}{D_\alpha\phi,}
}
and let
\EQ{\label{QGP}
T(G,\phi)_{\mu\nu}=T(G)_{\mu\nu}+{T(\phi)_{\mu\nu}.}
}
When there is no risk of confusion {we write} $T$ instead of $T(G,\phi)$ . The following lemma is standard.
\begin{lemma}
$T(G,\phi),~T(\phi),$ and $T(G)$ are symmetric and $T(G)$ is traceless. Moreover if $(F,\phi)$ is a solution of (\ref{MKG}) then $\nabla^\mu T(F,\phi)_{\mu\nu}=0.$
\end{lemma}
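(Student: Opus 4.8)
The plan is to verify each of the three assertions of the lemma by direct computation from the definitions \eqref{QG}, \eqref{QPHI}, \eqref{QGP}, using only the field equations \eqref{MKG} at the last step. For \emph{symmetry}, the tensor $T(\phi)_{\mu\nu}$ is manifestly symmetric since $\Re(\overline{D_\mu\phi}D_\nu\phi)$ is symmetric in $\mu,\nu$ (it equals $\tfrac12(\overline{D_\mu\phi}D_\nu\phi+D_\mu\phi\overline{D_\nu\phi})$) and $g_{\mu\nu}$ is symmetric; for $T(G)_{\mu\nu}=G_{\mu\alpha}G_\nu{}^\alpha+\star G_{\mu\alpha}\star G_\nu{}^\alpha$ the symmetry is clear since swapping $\mu\leftrightarrow\nu$ in $G_{\mu\alpha}G_\nu{}^\alpha=g^{\alpha\beta}G_{\mu\alpha}G_{\nu\beta}$ gives the same expression, and likewise for the $\star G$ term. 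Hence $T(G,\phi)$ is symmetric. For \emph{tracelessness of $T(G)$}, I would contract with $g^{\mu\nu}$: $g^{\mu\nu}G_{\mu\alpha}G_\nu{}^\alpha=G^{\nu\alpha}G_{\nu\alpha}=|G|^2$ and similarly $g^{\mu\nu}\star G_{\mu\alpha}\star G_\nu{}^\alpha=|\star G|^2$; in four dimensions with Lorentzian signature one has the pointwise identity $|\star G|^2=-|G|^2$ for a two-form $G$ (this is the standard fact that the Hodge star on $2$-forms in signature $(1,3)$ satisfies $\star\star=-\mathrm{id}$ and the induced quadratic form flips sign), so the two terms cancel and $g^{\mu\nu}T(G)_{\mu\nu}=0$. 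Equivalently one can check this in the null frame using the null decomposition $\alpha,\alphabar,\rho,\sigma$, where $|G|^2\sim |\alpha||\alphabar|-\rho^2-\sigma^2$ type expressions make the cancellation transparent.

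For the \emph{divergence-free} property, assume $(F,\phi)$ solves \eqref{MKG} and compute $\nabla^\mu T(F)_{\mu\nu}$ and $\nabla^\mu T(\phi)_{\mu\nu}$ separately. For the electromagnetic part, $\nabla^\mu(F_{\mu\alpha}F_\nu{}^\alpha)=(\nabla^\mu F_{\mu\alpha})F_\nu{}^\alpha+F_{\mu\alpha}\nabla^\mu F_\nu{}^\alpha$; using the first equation $\nabla^\mu F_{\mu\alpha}=J_\alpha(\phi)$ and the Bianchi identity $\nabla_{[\lambda}F_{\mu\nu]}=0$ (equivalent to $\nabla^\mu\star F_{\mu\nu}=0$, the second equation of \eqref{MKG}) one rewrites $F_{\mu\alpha}\nabla^\mu F_\nu{}^\alpha$ and combines it with the analogous computation for $\star F$; the well-known outcome is $\nabla^\mu T(F)_{\mu\nu}=F_{\nu\alpha}J^\alpha(\phi)$. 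For the scalar part, $\nabla^\mu T(\phi)_{\mu\nu}=\Re(\overline{D^\mu D_\mu\phi}\,D_\nu\phi)+\Re(\overline{D^\mu\phi}\,D^\mu D_\nu\phi)-\tfrac12\nabla_\nu(\overline{D^\alpha\phi}D_\alpha\phi)$; the first term vanishes by $D^\mu D_\mu\phi=0$, and the remaining two combine after using the identity $D_\mu D_\nu\phi-D_\nu D_\mu\phi=iF_{\mu\nu}\phi$ (the curvature of the connection $D$) to leave exactly $-F_{\nu\alpha}J^\alpha(\phi)$, so the two contributions cancel and $\nabla^\mu T(F,\phi)_{\mu\nu}=0$.

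The main obstacle, such as it is, is organizing the index gymnastics in the scalar-field divergence computation: one must be careful to distinguish $D$ from $\nabla$, to use $\Re(\overline{z}w)=\Re(\overline{w}z)$ to symmetrize appropriately, and to correctly produce the $iF_{\mu\nu}\phi$ curvature term with the right sign so that it matches the $F_{\nu\alpha}J^\alpha(\phi)$ coming from the Maxwell part. Concretely, in the cross term one writes $\Re(\overline{D^\mu\phi}\,D^\mu D_\nu\phi)=\Re(\overline{D^\mu\phi}\,D_\nu D^\mu\phi)+\Re(\overline{D^\mu\phi}\,iF^\mu{}_\nu\phi)$; the first piece is $\tfrac12\nabla_\nu(\overline{D^\mu\phi}D_\mu\phi)$ which kills the last term of $\nabla^\mu T(\phi)_{\mu\nu}$, while the second piece equals $\Re(i\phi\,\overline{D^\mu\phi})F^\mu{}_\nu=-\Im(\overline{\phi}D^\mu\phi)F_{\mu\nu}\cdot(-1)$, and unwinding the sign conventions with $J_\nu(\phi)=\Im(\overline{\phi}D_\nu\phi)$ gives precisely $-F_{\nu\alpha}J^\alpha(\phi)$. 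Everything else is routine, and since this lemma is labelled standard I would keep the written proof terse, citing the curvature identity and the Bianchi identity and performing only the cancellation bookkeeping explicitly.
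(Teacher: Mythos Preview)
Your proof is correct. The paper itself omits the proof entirely, simply calling the lemma ``standard,'' so there is no approach to compare against; your direct computation---symmetry by inspection, tracelessness via $|\star G|^2=-|G|^2$ in signature $(1,3)$, and the divergence identity by combining $\nabla^\mu T(F)_{\mu\nu}=F_{\nu\alpha}J^\alpha$ (from the Bianchi identity and the inhomogeneous Maxwell equation) with $\nabla^\mu T(\phi)_{\mu\nu}=-F_{\nu\alpha}J^\alpha$ (from $D^\mu D_\mu\phi=0$ and the curvature relation $[D_\mu,D_\nu]\phi=iF_{\mu\nu}\phi$)---is exactly the standard argument and is carried out accurately, including the sign bookkeeping in the last step.
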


The energy norms {inside of $V_T$} are defined as
\begin{align}\label{energies}
&Q_0(\phi)(t,T)^2=\int_{\Sigma_t(T)}\left(\tau_{+}^2|(D_L+\frac{1}{r})\phi|^2+\tau_{-}^2|(D_{\Lbar}-\frac{1}{r})\phi|^2+(\tau_{+}^2+\tau_{-}^2)(|\sD\phi|^2+\frac{|\phi|^2}{r^2})\right)\\\notag
&Q_0(G)(t,T)^2=\int_{\Sigma_t(T)}\left(\taup^2|\alpha(G)|^2+\taum^2|\alphabar(G)|^2+(\taup^2+\taum^2)(|\rho(G)|^2+|\sigma(G)|^2)\right)\\\notag
&Q_{out}(\phi)(u,T)^2=\int_{C_u(T)}\left(\taup^2|(D_L+\frac{1}{r})\phi|^2+\taum^2(|\sD\phi|^2+\frac{|\phi|^2}{r^2})\right)\\\notag
&Q_{out}(G)(u,T)^2=\int_{C_u(T)}\left(\taup^2|\alpha(G)|^2+\taum^2(|\rho(G)|^2+|\sigma(G)|^2)\right)\\\notag
&Q_{in}(\phi)(\ubar,T)^2=\int_{{\Cbar}_{\ubar}(T)}\left(\taum^2|(D_{\Lbar}-\frac{1}{r})\phi|^2+\taup^2(|\sD\phi|^2+\frac{|\phi|^2}{r^2})\right)\\\notag
&Q_{in}(G)(\ubar,T)^2=\int_{{\Cbar}_{\ubar}(T)}\left(\taum^2|\alphabar(G)|^2+\taup^2(|\rho(G)|^2+|\sigma(G)|^2)\right).
\end{align}
We also let $Q_0(G,\phi)=Q_0(G)+Q_0(\phi)$ and define $Q_{in}(G,\phi)$ and $Q_{out}(G,\phi)$ analogously. The supremum of these energies in a region of spacetime is encoded in the following energies
\EQ{\label{Q*}
&Q^{*}(\phi)(T)=\sup_{0\leq t\leq T}Q_0(\phi)(t,T)+\sup_{-1\leq u\leq\infty}Q(\phi)(u,T)+\sup_{0<\ubar<\infty}Q(\phi)(\ubar,T)\\
&Q^{*}(\phi)(T)=\sup_{0\leq t\leq T}Q_0(G)(t,T)+\sup_{-1\leq u\leq\infty}Q(G)(u,T)+\sup_{0<\ubar<\infty}Q(G)(\ubar,T)\\
&Q^{*}(G,\phi)(T)=Q^{*}(G)(T)+Q^{*}(\phi)(T).
}
Finally the higher energy norms are defined as
\EQ{\label{Qi}
{Q_{0}}_{|I|}(G)(t,T)={\sum_{\Gamma_{I}\in\mathbb{L}^{|I|}}Q_{0}(\LL^{|I|}_{\Gamma_I}G)(t,T),}
}
with similar formulas for the other energies. We will eventually be interested in bounding {$Q(\LL^I_\Gamma F,\LL^I_\Gamma\phi).$ We} will next prove some energy estimates for the linear inhomogeneous equations
{
\begin{align}\label{inhomogeneous linear equations}
\nabla_{[\alpha}G_{\beta\gamma]}=0,\quad\nabla^{\mu}G_{\mu\nu}=J_{\nu},\quad D^{\alpha}D_{\alpha}\phi=f.
\end{align}}
\begin{proposition}
For the first two equations in \eqref{inhomogeneous linear equations}, we have the following estimate
{\begin{align}\label{Morawetz-F1}
&\int_{\Sigma_{t}(T)}\big(\taup^{2}|\alpha|^{2}+\taum^{2}|\alphabar|^{2}+(\taup^{2}+\taum^{2})(|\rho|^{2}+|\sigma|^{2})\big)\\\notag
&+\int_{\Cbar_{\ubar}(T)\bigcap\{0\leq t'\leq t\}}\big(\taum^{2}|\alphabar|^{2}+\taup^{2}(|\rho|^{2}+|\sigma|^{2})\big)\\\notag
&+\int_{C_{u}(T)\bigcap\{0\leq t'\leq t\}}\big(\taup^{2}|\alpha|^{2}+\taum^{2}(|\rho|^{2}+|\sigma|^{2})\big)\\\notag
&\leq C\int_{C_{-1}\bigcap\{0\leq t'\leq t\}}\big(\taup^{2}|\alpha|^{2}+\taum^{2}(|\rho|^{2}+|\sigma|^{2})\big)+\int_{0}^{t}\int_{\Sigma_{t'}(T)}\big|\overline{K}^{\nu}_{0}{G}_{\mu\nu}J^{\mu}\big|dxdt'\\\notag
&\quad+C\int_{\Sigma_{0}(T)}r^{2}\big(|\alpha|^{2}+|\alphabar|^{2}+|\rho|^{2}+|\sigma|^{2}\big)dx.
\end{align}
Here $\alpha,\alphabar, \rho,\sigma$ are associated to {$G$}.}
\begin{proof}
The proof is {a standard application of the divergence theorem to $T^{\mu\nu}{\Kbar}_\nu,$}and we just need to observe that{
\begin{align*}
Q(G)(\Lbar,\Lbar)=2|\alphabar|^{2},\quad Q(G)(L,L)=2|\alpha|^{2},\quad Q(G)(\Lbar,L)=2(\rho^{2}+\sigma^{2}), \quad \nabla^\mu(Q_{\mu\nu}\overline{K}_{0}^{\nu})=\overline{K}^{\nu}_{0}\nabla^{\mu}Q_{\mu\nu}.
\end{align*}}
\end{proof}
\end{proposition}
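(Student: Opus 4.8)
The plan is to apply the divergence theorem to the vector field $P^\mu := T(G)^{\mu\nu}\overline{K}_{0,\nu}$ over the truncated region $V_T \cap \{0 \leq t' \leq t\}$, whose boundary consists of the ``top'' slice $\Sigma_t(T)$, the ``bottom'' slice $\Sigma_0(T)$, the outgoing part $C_{-1} \cap \{0 \leq t' \leq t\}$, and — because the region is foliated both by the $\Sigma_{t'}$ and by the null cones — contributions on $C_u(T)$ and $\overline{C}_{\ubar}(T)$ for $-1 \le u$ and $0 < \ubar$. (More precisely, one should think of the boundary of $V_T\cap\{0\le t'\le t\}$ as built from $\Sigma_0(T)$, $\Sigma_t(T)$, and the lateral cone $C_{-1}$; the null flux terms on $C_u$ and $\overline{C}_{\ubar}$ appear once one further decomposes, or rather one proves a slightly stronger statement foliating by null cones and then takes suprema, exactly as the energies in \eqref{energies} are set up. I will present it as the combined null/spacelike energy identity.) The key algebraic inputs are the null components of the energy-momentum tensor contracted against the null frame, namely $T(G)(\Lbar,\Lbar) = 2|\alphabar|^2$, $T(G)(L,L) = 2|\alpha|^2$, $T(G)(\Lbar,L) = 2(\rho^2+\sigma^2)$, together with the fact that $T(G)$ is symmetric and traceless, so that the mixed spherical components contribute only $|\rho|^2+|\sigma|^2$ as well. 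Since $\overline{K}_0 = \tfrac{1+u^2}{2}\Lbar + \tfrac{1+\ubar^2}{2}L = \tfrac{\taum^2}{2}\Lbar + \tfrac{\taup^2}{2}L$ up to harmless constants, contracting $T(G)_{\mu\nu}$ once more against $\overline{K}_0$ and then against the normals of the various boundary pieces produces precisely the weighted combinations $\taup^2|\alpha|^2 + \taum^2|\alphabar|^2 + (\taup^2+\taum^2)(|\rho|^2+|\sigma|^2)$ on $\Sigma_t(T)$, and the corresponding one-weight-lower expressions on the null cones, matching the left-hand side of \eqref{Morawetz-F1}.

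Next I would compute the divergence of $P^\mu$. Using the Leibniz rule and the deformation tensor of $\overline{K}_0$, we have
\begin{align*}
\nabla^\mu\big(T(G)_{\mu\nu}\overline{K}_0^\nu\big) = \big(\nabla^\mu T(G)_{\mu\nu}\big)\overline{K}_0^\nu + \tfrac{1}{2}T(G)^{\mu\nu}\,{}^{(\overline{K}_0)}\pi_{\mu\nu}.
\end{align*}
The second term vanishes because $\overline{K}_0$ is conformal Killing (with conformal factor $4t$, cf. Subsection \ref{Diff Ops}) and $T(G)$ is traceless, so that $T(G)^{\mu\nu}\,{}^{(\overline{K}_0)}\pi_{\mu\nu} = 4t\, g^{\mu\nu} T(G)_{\mu\nu} = 0$; this is the content of the displayed identity $\nabla^\mu(Q_{\mu\nu}\overline{K}_0^\nu) = \overline{K}_0^\nu \nabla^\mu Q_{\mu\nu}$ in the statement. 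For the first term, since $G$ need not solve the homogeneous Maxwell system but rather $\nabla^\mu G_{\mu\nu} = J_\nu$ with $\nabla_{[\alpha}G_{\beta\gamma]}=0$, the standard computation of $\nabla^\mu T(G)_{\mu\nu}$ gives $\nabla^\mu T(G)_{\mu\nu} = G_{\nu\mu}J^\mu$ (the $\star G$ contribution drops by the Bianchi identity $\nabla^{[\alpha}G^{\beta\gamma]}=0$). Hence $\nabla^\mu P_\mu = \overline{K}_0^\nu G_{\nu\mu} J^\mu$, and integrating over $V_T\cap\{0\le t'\le t\}$ with the divergence theorem, the space-time error integral $\int_0^t\int_{\Sigma_{t'}(T)} |\overline{K}_0^\nu G_{\mu\nu} J^\mu|\, dx\, dt'$ appears on the right, exactly as written.

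The remaining point is the treatment of the bottom slice $\Sigma_0(T)$. On $\Sigma_0$ we have $u = -r$, $\ubar = r$, so $\taup^2 \sim 1+r^2$ and $\taum^2 \sim 1+r^2$, and the boundary term there is bounded by $C\int_{\Sigma_0(T)}(1+r^2)(|\alpha|^2+|\alphabar|^2+|\rho|^2+|\sigma|^2)\,dx$, which is absorbed into the stated term $C\int_{\Sigma_0(T)} r^2(|\alpha|^2+|\alphabar|^2+|\rho|^2+|\sigma|^2)\,dx$ (the ``$1$'' part is controlled using that $\Sigma_0(T)$ has finite radius inside $V_T$, since $u \le -1$ there forces $r \le t+1$ and on $\Sigma_0$ this bounds $r$ — more to the point, the region is compact in $r$ away from where weights degenerate, and one simply keeps the weighted norm). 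Collecting the signed boundary contributions — the top and null-cone terms come with the good sign and go to the left, the bottom and the divergence error go to the right — yields \eqref{Morawetz-F1}. The only mild subtlety, and the step I would be most careful about, is bookkeeping the geometry of $\partial(V_T \cap \{0\le t'\le t\})$ so that the outgoing flux is correctly attributed to $C_{-1}$ while the interior null fluxes $C_u$, $\overline{C}_{\ubar}$ arise with the correct (positive) weights and signs; this is entirely standard but is where an overlooked boundary piece or a sign error would most naturally creep in. Everything else is the routine null-frame computation already summarized in the statement.
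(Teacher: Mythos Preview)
Your proposal is correct and follows exactly the same route as the paper's proof: apply the divergence theorem to $T(G)^{\mu\nu}\overline{K}_{0,\nu}$, use that $T(G)$ is traceless together with the conformal Killing property of $\overline{K}_0$ to kill the deformation-tensor term, and read off the boundary contributions via the null components $T(G)(L,L)=2|\alpha|^2$, $T(G)(\Lbar,\Lbar)=2|\alphabar|^2$, $T(G)(L,\Lbar)=2(\rho^2+\sigma^2)$. You supply considerably more detail than the paper (which simply calls the argument ``standard''), and your identification of the divergence as $\overline{K}_0^\nu G_{\nu\mu}J^\mu$ via the Bianchi identity is exactly right. One small slip: in your discussion of $\Sigma_0(T)$ you write ``$u\le -1$ there forces $r\le t+1$,'' but inside $V_T$ the relevant inequality is $u\ge -1$, which on $\Sigma_0$ gives $r\le 1$; your conclusion (finite radius, hence the ``$1$'' in $1+r^2$ is harmless) is nonetheless correct.
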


{\begin{proposition}\label{LS Morawetz}
{For the third equation of \eqref{inhomogeneous linear equations}, and $\Omega=r$ or $u\ubar$
\begin{align}
&\int_{\Sigma_{t}(T)}\left(\ubar^{2}|\frac{1}{\Omega}D_{L}(\Omega\phi)|^{2}+u^{2}|\frac{1}{\Omega}
D_{\Lbar}(\Omega\phi)|^{2}+(\ubar^{2}+u^{2})(|\slashed{D}\phi|^{2}+\frac{|\phi|^{2}}{r^{2}})\right)\nonumber\\
&+\int_{\Cbar_{\ubar}(T)\cap\{0\leq t'\leq t\}}\left(u^{2}|(\frac{1}{\Omega}D_{\Lbar}(\Omega\phi)|^{2}+\ubar^{2}|\slashed{D}\phi|^{2}+\frac{\ubar^{2}}{r^{2}}|\phi|^{2}\right)\nonumber\\
&+\int_{C_{u}(T)\cap\{0\leq t'\leq t\}}\left(\ubar^{2}|\frac{1}{\Omega}D_{L}(\Omega\phi)|^{2}+u^{2}|\slashed{D}\phi|^{2}+\frac{\ubar^{2}}{r^{2}}|\phi|^{2}\right)\nonumber\\
&\leq C\int_{0}^{t}\int_{\Sigma_{t}(T)}\left(|f\cdot\frac{1}{r}D_{K_{0}}(r\phi)|+|f\cdot\frac{1}{u\ubar}D_{K_{0}}(u\ubar\phi)|+|K^{\beta}_{0}F_{\beta\gamma}\Im
\left(\phi\overline{D^{\gamma}\phi}\right)|\right)dxdt'\nonumber\\
&\quad+CQ_{0}(\phi)(0,T)^{2}+\int_{C_{-1}(T)\cap\{0\leq t'\leq t\}}\left(\ubar^{2}|\frac{1}{\Omega}D_{L}(\Omega\phi)|^{2}+u^{2}|\slashed{D}\phi|^{2}+\frac{\ubar^{2}}{r^{2}}|\phi|^{2}\right).\label{Morawetz-phi}
\end{align}}
\end{proposition}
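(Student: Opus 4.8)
\textbf{Proof strategy for Proposition \ref{LS Morawetz}.}

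The plan is to derive this as a weighted energy identity obtained by applying the divergence theorem to the current $P_\mu = T(\phi)_{\mu\nu}\Kbar^\nu$, where $\Kbar = T_0 + K_0$ is the modified Morawetz multiplier, but with the crucial twist used in \cite{LS1}: one does not work with $\phi$ directly but with the rescaled fields $r\phi$ and $u\ubar\phi$. Concretely, I would first record the pointwise identity
\[
\nabla^\mu\big(T(\phi)_{\mu\nu}\Kbar^\nu\big) = \tfrac{1}{2}\,{}^{(\Kbar)}\pi^{\mu\nu}T(\phi)_{\mu\nu} + \Kbar^\nu\big(\nabla^\mu T(\phi)_{\mu\nu}\big),
\]
and then use the facts that $\Kbar$ is conformal Killing with conformal factor $\Omega_{\Kbar}=4t$ (recorded in Subsection \ref{Diff Ops}) and that $T(\phi)$ fails to be traceless only by the term $-\tfrac{1}{2}g_{\mu\nu}\overline{D^\alpha\phi}D_\alpha\phi$; these combine so that the deformation-tensor term reduces to $-2t\,\overline{D^\alpha\phi}D_\alpha\phi$, which, together with the equation $D^\alpha D_\alpha\phi = f$, must be integrated by parts once more. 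Following \cite{LS1}, the right way to organize this is to note that for $\Omega\in\{r,u\ubar\}$ one has the algebraic identity expressing $\Kbar$-weighted quantities in terms of $\tfrac{1}{\Omega}D_L(\Omega\phi)$, $\tfrac{1}{\Omega}D_{\Lbar}(\Omega\phi)$, $\sD\phi$ and $\phi/r$; the weights $u^2,\ubar^2$ appearing on the left-hand side are exactly $\taum^2,\taup^2$ up to the harmless $O(1)$ discrepancy, and $K_0 = \tfrac12(u^2\Lbar + \ubar^2 L)$ produces precisely these coefficients when contracted against $T(\phi)$ in the null frame \eqref{Ricci coefficients}.

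Next I would apply the divergence theorem on the region $V_T\cap\{0\le t'\le t\}$, whose boundary consists of $\Sigma_t(T)$, $\Sigma_0(T)$, the lateral null cone $C_{-1}(T)$, and the null cones $C_u(T)$, $\Cbar_{\ubar}(T)$ along which we want flux control. The flux of $P_\mu$ through each of these is nonnegative and, after inserting the rescaled-field identities, equals the corresponding integrand on the left-hand side of \eqref{Morawetz-phi}; the contribution from $\Sigma_0(T)$ is bounded by $Q_0(\phi)(0,T)^2$ and the one from $C_{-1}(T)$ is the last term on the right. The bulk error $\int_0^t\int_{\Sigma_{t'}(T)}$ splits into: (a) the term from $\Kbar^\nu\nabla^\mu T(\phi)_{\mu\nu}$, which by the scalar equation and the identity $\nabla^\mu T(\phi)_{\mu\nu} = \Re(\overline{D_\nu\phi}\,f) + F_{\nu\gamma}\Im(\phi\,\overline{D^\gamma\phi})$ produces exactly the $|f\cdot\frac1r D_{K_0}(r\phi)|$-type terms and the $|K_0^\beta F_{\beta\gamma}\Im(\phi\overline{D^\gamma\phi})|$ term; and (b) the trace/deformation remainder, which after the extra integration by parts either cancels against boundary pieces or is absorbed into terms of the same schematic form. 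The appearance of \emph{two} weights $\Omega=r$ and $\Omega=u\ubar$ reflects that one runs the argument twice with the two choices of rescaling, and the stated inequality bundles both; this is the ``double integral'' feature the introduction flagged as essential in $\{u\le -1\}$.

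The main obstacle I expect is the bookkeeping of the trace term: because $T(\phi)$ is not traceless, the naive divergence computation leaves a term proportional to $t\,\overline{D^\alpha\phi}D_\alpha\phi$ which is \emph{not} manifestly of lower order, and making it disappear requires exactly the Lindblad--Sterbenz rescaling trick — writing $D_{K_0}(r\phi)$ and $D_{K_0}(u\ubar\phi)$ and integrating by parts so that the would-be-dangerous term is converted into a total derivative plus the controlled error terms listed on the right. A secondary technical point is verifying that the angular and $\phi/r^2$ terms on $\Sigma_t(T)$, $C_u(T)$, $\Cbar_{\ubar}(T)$ come out with the correct signs and weights; this is where one uses \eqref{Ricci coefficients} (in particular $\nabla_{e_A}L = \frac1r e_A$, $\nabla_{e_A}\Lbar=-\frac1r e_A$) to handle the cross terms generated by $D_{e_A}(r\phi)$ versus $rD_{e_A}\phi$. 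Since the problem explicitly grants us \cite{LS1}'s argument, I would present this at the level of ``apply divergence theorem to $T(\phi)_{\mu\nu}\Kbar^\nu$, use the rescaled fields exactly as in \cite{LS1}, and read off \eqref{Morawetz-phi}'', relegating the sign checks to the reader.
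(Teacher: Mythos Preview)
Your overall plan is viable, but it is organized differently from the paper's proof and from what \cite{LS1} actually do. You propose to apply the divergence theorem to $T(\phi)_{\mu\nu}\Kbar^\nu$ in Minkowski, confront the trace term $-2t\,\overline{D^\alpha\phi}D_\alpha\phi$ coming from the nonvanishing trace of $T(\phi)$, and then remove it by the rescaled-field integration-by-parts correction. The paper instead passes to the conformal metric $\tilde g_{\alpha\beta}=\Omega^{-2}g_{\alpha\beta}$ with $\Omega\in\{r,u\ubar\}$, observes that $K_0$ is an \emph{exact} Killing field for $\tilde g$ and that $\Box_g(\Omega^{-1})=0$ for these two choices, and works with the conformal stress--energy tensor $\widetilde Q_{\alpha\beta}[\phi]$ of $\Omega\phi$. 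In that picture the deformation-tensor term is identically zero, so the ``main obstacle'' you anticipate simply never appears; the only bulk terms are $\Re(f\cdot\overline{\Omega^{-1}D_{K_0}(\Omega\phi)})$ and $K_0^\beta F_{\beta\gamma}\Im(\phi\,\overline{\Omega^{-1}D^\gamma(\Omega\phi)})$, which are exactly the right-hand side of \eqref{Morawetz-phi}. Your approach and the conformal one are of course equivalent (the conformal method is a repackaging of the correction-current trick), but the paper's route buys you the vanishing of the trace term for free rather than through a separate computation.

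Two smaller points. First, you contract with $\Kbar=T_0+K_0$; the paper contracts with $K_0$ alone, which is why the left side of \eqref{Morawetz-phi} carries $u^2,\ubar^2$ rather than $\taum^2,\taup^2$. Second, your remark about the ``double integral feature'' belongs to Lemma \ref{fractional Morawetz lemma} (the fractional Morawetz estimate with the weight $\widetilde w_\gamma$), not to this proposition: here the spacetime integral appears only on the right, as an error, and there is no positive bulk term on the left.
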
}
\begin{proof}
We follow the argument in {\cite{LS1}} to prove \eqref{Morawetz-phi}. We denote the Minkowski metric by $g_{\alpha\beta}$ and consider its conformal metric
\begin{align*}
\widetilde{g}_{\alpha\beta}=\frac{1}{\Omega^{2}}g_{\alpha\beta}
\end{align*}
for some weight function $\Omega$ on $\mathbb{R}\times\mathbb{R}^{3}$. Then $\Omega\phi$ satisfies
\begin{align*}
\widetilde{D}^{\alpha}\widetilde{D}_{\alpha}(\Omega\phi)-\Omega f\Omega^{3}\nabla^{\alpha}\nabla_{\alpha}
\left(\frac{1}{\Omega}\right)=\Omega^{3}f.
\end{align*}
Now we fix $\Omega=r$ or $u\ubar,$ and note that for these choices $\nabla^{\alpha}\nabla_{\alpha}\left(\Omega^{-1}\right)=0$ and $\LL_{K_{0}}\widetilde{g}=0.$ We define the corresponding energy-momentum tensor associated to these two conformal factors as
\begin{align*}
\widetilde{Q}_{\alpha\beta}[\phi]=\Re\big(D_{\alpha}(\Omega\phi)\overline{D_{\beta}(\Omega\phi)}\big)-\frac{1}{2}\widetilde{g}_{\alpha\beta}\widetilde{D}^{\gamma}(\Omega\phi)\overline{D_{\gamma}(\Omega\phi)}.
\end{align*}
Here we have used the notation $\widetilde{D}^{\gamma}=\widetilde{g}^{\alpha\gamma}
D_{\alpha}$. The actual definition for $\widetilde{D}^{\gamma}$ is $\widetilde{D}^{\gamma}=\widetilde{g}^{\alpha\gamma}
\widetilde{D}_{\alpha}$, but here we only apply $\widetilde{D}_{\alpha}$ to a scalar function, we can replace it by $D_{\alpha}$. By direct calculation
\begin{align*}
\widetilde{\nabla}^{\alpha}\widetilde{Q}_{\alpha\beta}[\phi]=\Omega^{4}\Big(\Re\big(f\cdot\overline{\frac{1}{\Omega}D_{\beta}(\Omega\phi)}\big)+F_{\beta\gamma}\Im\big(\phi\overline{\frac{1}{\Omega}D^{\gamma}(\Omega\phi)}\big)\Big).
\end{align*}
Here $F_{\beta\gamma}$ comes from the commutator $[D_{\beta},D_{\gamma}]$. Applying the above identity to the multiplier $\overline{K}_{0}$ we have:
\begin{align*}
\widetilde{\nabla}^{\alpha}\big(\widetilde{Q}_{\alpha\beta}[\phi]\big(K_{0}\big)^{\beta}\big)=
\Omega^{4}\Big(\Re\big(f\cdot\overline{\frac{1}{\Omega}D_{K_{0}}(\Omega\phi)}\big)+K_{0}^{\beta}F_{\beta\gamma}\Im\big(\phi\overline{\frac{1}{\Omega}D^{\gamma}(\Omega\phi)}\big)\Big).
\end{align*}
Integrating this identity over various space-time domains we obtain energy estimates involving space-time integrals of $f$. Since now we work with the conformal metric $\widetilde{g}$, the spacetime measure $d\widetilde{V}$ is given in terms of the measure in Minkowski spacetime $dV$ by
\begin{align*}
d\widetilde{V}=\frac{1}{\Omega^{4}}dV.
\end{align*}
Similarly, the measures associated to the conformal metric on the hypersurfsces $\Sigma_{t}$ and $C_{u}$ are
\begin{align*}
d\widetilde{\Sigma}_{t}=\frac{1}{\Omega^{3}}d\Sigma_{t},\quad d\widetilde{C}_{u}=\frac{1}{\Omega^{3}}dC_{u}
\end{align*}
On the other hand, associated to the conformal metric $\widetilde{g}$, the unit normal vector-fields to {$\Sigma_{t},~C_{u},$ and ${\Cbar}_{\ubar}$ are, up to multiplication by a constant, $\Omega(L+\Lbar),~\Omega L,$ and $\Omega\Lbar$ respectively.} Noting that $\phi$ vanishes on $C_{-1},$ for $\Omega=r,~u\ubar,$ we get{
\begin{align*}
&\int_{\Sigma_{t}(T)}\left(\ubar^{2}|\frac{1}{\Omega}(D_{L}(\Omega\phi)|^{2}+u^{2}|\frac{1}{\Omega}D_{\Lbar}(\Omega\phi)|^{2}+(\ubar^{2}+u^{2})|\slashed{D}\phi|^{2}\right)\\
&+\int_{{\Cbar}_{\ubar}(T)\cap\{0\leq t'\leq t\}}\left(u^{2}|\frac{1}{\Omega}D_{\Lbar}(\Omega\phi)|^{2}+\ubar^{2}|\slashed{D}\phi|^{2}\right)\\
&+\int_{C_{u}(T)\cap\{0\leq t'\leq t\}}\left(\ubar^{2}|\frac{1}{\Omega}D_{L}(\Omega\phi)|^{2}+u^{2}|\slashed{D}\phi|^{2}\right)\\
&\leq ``\textrm{initial data}" +C\int_{0}^{t}\int_{\Sigma_t(T)}\left(|f\cdot\frac{1}{\Omega}D_{K_{0}}(\Omega\phi)|+|K^{\beta}_{0}F_{\beta\gamma}\Im
\left(\phi\overline{D^{\gamma}\phi}\right)|\right)dxdt'.
\end{align*}}
{Schematically, here we have first used $\int_{\Sigma_t}+\int_{{\Cbar}_{\ubar}}\leq\int\int+\int_{\Sigma_0}$ and then $\int_{C_u}\leq\int_{\Sigma_t}+\int_{\Sigma_0}+\int\int.$} {To complete the proof of the proposition we only need to control $\left(\frac{\ubar^{2}+u^{2}}{r^{2}}\right)|\phi|^{2},$ for which it suffices to note that
\begin{align*}
\frac{-u\phi}{r}=(\ubar D_{L}\phi+2\phi)-\big(\ubar D_{L}\phi+\frac{\ubar}{r}\phi\big),\quad \frac{\ubar\phi}{r}=(uD_{\Lbar}\phi+2\phi)-\big(uD_{\Lbar}\phi-\frac{u}{r}\phi\big),
\end{align*}
as well as
\begin{align*}
&\ubar^{2}\big|\frac{1}{r}D_{L}(r\phi)\big|^{2}=\big|\ubar D_{L}\phi+\frac{\ubar\phi}{r}\big|^{2},\quad u^{2}\big|\frac{1}{r}D_{\Lbar}(r\phi)\big|^{2}=\big|uD_{\Lbar}\phi-\frac{u\phi}{r}\big|^{2}\\
&\ubar^{2}\big|\frac{1}{u\ubar}D_{L}(u\ubar\phi)\big|^{2}=|\ubar D_{L}\phi+2\phi|^{2},\quad u^{2}\big|\frac{1}{u\ubar}D_{\Lbar}(u\ubar\phi)\big|^{2}=|uD_{\Lbar}\phi+2\phi|^{2}.
\end{align*}}
\end{proof}
{Noting that $\phi$ is supported in $V_T$ and combining Proposition \ref{LS Morawetz} with the usual energy estimates associated to the multiplier $D_{t}\phi$ and Hardy's inequality{
\begin{align*}
\int_{\Sigma_{t}}\Big|\frac{\phi}{r}\Big|^{2}dx\lesssim\int_{\Sigma_{t}}|D\phi|^{2}dx,
\end{align*}}}
we get the following energy estimates for $\phi.$
{\begin{corollary}
Under the same assumption of Proposition \ref{LS Morawetz}
\begin{align}\label{Morawetz-Kbar}
&\int_{\Sigma_{t}(T)}\Big(\taup^{2}|(D_{L}+\frac{1}{r})\phi|^{2}+\taum^{2}|(D_{\Lbar}-\frac{1}{r})\phi|^{2}+(\taup^{2}+\taum^{2})(|\sD\phi|^{2}+\frac{|\phi|^{2}}{r^{2}})\Big)\\\notag
&+\int_{\Cbar_{\ubar}(T)\cap\{0\leq t'\leq t\}}\Big(\taum^{2}|(D_{\Lbar}-\frac{1}{r})\phi|^{2}+\taup^{2}(|\sD\phi|^{2}+\frac{|\phi|^{2}}{r^{2}})\Big)\\\notag
&+\int_{C_{u}(T)\cap\{0\leq t'\leq t\}}\Big(\taup^{2}|(D_{L}+\frac{1}{r})\phi|^{2}+\taum^{2}(|\sD\phi|^{2}+\frac{|\phi|^{2}}{r^{2}})\Big)\\\notag
&\leq CQ_{0}(\phi)(0,T)^{2}+C\int_{0}^{t}\int_{\Sigma_t(T)}|f\cdot\frac{1}{r}D_{\overline{K}_{0}}(r\phi)|+|\overline{K}^{\beta}_{0}F_{\beta\gamma}\Im
\big(\phi\overline{D^{\gamma}\phi}\big)|dxdt'\\\notag
&+\int_{C_{-1}(T)\cap\{0\leq t'\leq t\}}\Big(\taup^{2}|(D_{L}+\frac{1}{r})\phi|^{2}+\taum^{2}(|\sD\phi|^{2}+\frac{|\phi|^{2}}{r^{2}})\Big).
\end{align}
\end{corollary}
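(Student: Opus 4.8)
The plan is to obtain \eqref{Morawetz-Kbar} by superposing three ingredients: Proposition \ref{LS Morawetz} applied with the conformal weight $\Omega=r$, the standard energy estimate associated to the multiplier $T_0=\partial_t$ (i.e.\ the divergence theorem applied to $T(\phi)_{\mu\nu}T_0^\nu$), and Hardy's inequality. The first step is to record the elementary identities $\frac{1}{r}D_L(r\phi)=(D_L+\frac{1}{r})\phi$ and $\frac{1}{r}D_{\Lbar}(r\phi)=(D_{\Lbar}-\frac{1}{r})\phi$, which follow from $Lr=1$, $\Lbar r=-1$. With these, Proposition \ref{LS Morawetz} for $\Omega=r$ already delivers on its left side precisely the null structures $(D_L+\frac{1}{r})\phi$, $(D_{\Lbar}-\frac{1}{r})\phi$ occurring in \eqref{Morawetz-Kbar}, weighted by $\ubar^2$, $u^2$ on $\Sigma_t(T)$ and by the corresponding weights on the cones, together with the $(\ubar^2+u^2)(|\sD\phi|^2+\frac{|\phi|^2}{r^2})$ terms; on its right side it produces $Q_0(\phi)(0,T)^2$, the spacetime error $\int_0^t\int_{\Sigma_{t'}(T)}|f\cdot\frac{1}{r}D_{K_0}(r\phi)|+|K_0^\beta F_{\beta\gamma}\Im(\phi\overline{D^\gamma\phi})|$ (the $\Omega=u\ubar$ contribution is simply discarded, not being needed here), and the flux along $C_{-1}(T)$.

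Next I would add the $T_0$-energy estimate. Using the null decomposition of $T(\phi)$, its left side controls $\int_{\Sigma_t(T)}(|D_L\phi|^2+|D_{\Lbar}\phi|^2+|\sD\phi|^2)$ as well as the fluxes $\int_{C_u(T)}(|D_L\phi|^2+|\sD\phi|^2)$ and $\int_{\Cbar_{\ubar}(T)}(|D_{\Lbar}\phi|^2+|\sD\phi|^2)$, while its right side is bounded by $Q_0(\phi)(0,T)^2$, the error $\int_0^t\int_{\Sigma_{t'}(T)}|f\cdot D_t\phi|+|T_0^\beta F_{\beta\gamma}\Im(\phi\overline{D^\gamma\phi})|$, and the $T(\phi)$-flux along $C_{-1}(T)$. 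Adding this to the estimate above and using $\Kbar_0=T_0+K_0$ — hence $\frac{1}{r}D_{\Kbar_0}(r\phi)=D_t\phi+\frac{1}{r}D_{K_0}(r\phi)$, and likewise for the charge current — the two error terms combine into the single $\Kbar_0$-error on the right of \eqref{Morawetz-Kbar}. The unweighted $T_0$-energy simultaneously promotes the weights $\ubar^2,u^2$ on the left to $\taup^2=1+\ubar^2,\taum^2=1+u^2$; this upgrade is essentially free because inside $V_T$ one has $u\le -1$, hence $|u|\ge 1$ and $\ubar=t+r\ge 1$, so the two families of weights are already comparable there. One also passes from $|D_L\phi|^2,|D_{\Lbar}\phi|^2$ to $|(D_L+\frac{1}{r})\phi|^2,|(D_{\Lbar}-\frac{1}{r})\phi|^2$ via $D_L\phi=(D_L+\frac{1}{r})\phi-\frac{\phi}{r}$ (similarly for $\Lbar$), absorbing the remainder into the $|\phi|^2/r^2$ terms; on $\Sigma_t(T)$ the unit-weighted part of $(\taup^2+\taum^2)\frac{|\phi|^2}{r^2}$ is controlled by Hardy's inequality, its $(\ubar^2+u^2)$-weighted part being already present in Proposition \ref{LS Morawetz}, and on the cones the $\frac{|\phi|^2}{r^2}$ fluxes are recovered from the $\int_{C_u(T)}|D_L\phi|^2$, $\int_{\Cbar_{\ubar}(T)}|D_{\Lbar}\phi|^2$ fluxes (plus the $\frac{\ubar^2}{r^2}|\phi|^2$ terms already in Proposition \ref{LS Morawetz}) by Hardy's inequality in a form adapted to the null generators, using $r\gtrsim|u|$ on the part of the cones inside $V_T\cap\{t\ge 0\}$.

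The only real work, and the step I expect to be the main obstacle, is the weight bookkeeping: matching the competing quantities $\taup,\taum,\ubar,u,r$ on each of $\Sigma_t(T)$, $C_u(T)$, $\Cbar_{\ubar}(T)$ and $C_{-1}(T)$, and checking that the geometry of $V_T$ — $u\le -1$, forcing $|u|\ge 1$ and $\ubar\ge 1$, and $r\ge |u|$ on the cones restricted to $t\ge 0$, while $u\equiv -1$ and $\ubar\sim r$ on $C_{-1}$ — makes every term-by-term comparison valid. Beyond this the corollary carries no analytic content past Proposition \ref{LS Morawetz}; one must only be careful that the divergence-theorem region used for the $T_0$-energy estimate is the same truncation of $V_T$ (by $\Sigma_0$, by $\Sigma_t$, $C_u$ or $\Cbar_{\ubar}$, and by $C_{-1}$) as in Proposition \ref{LS Morawetz}, so that the $\Sigma_0$ and $C_{-1}$ boundary terms of the two estimates align and sum to exactly the right-hand side of \eqref{Morawetz-Kbar}, and that the decay (or compact support) of $\phi$ in $V_T$ justifies dropping the boundary term at spatial infinity.
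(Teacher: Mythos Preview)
Your approach is the same as the paper's: combine Proposition~\ref{LS Morawetz} (with $\Omega=r$) with the standard $T_0$-energy estimate and Hardy's inequality, and use $\Kbar=T_0+K_0$ to merge the error terms. One correction to your geometric bookkeeping, however: you write ``inside $V_T$ one has $u\le -1$,'' but $V_T$ is the region \emph{enclosed} by $C_{-1}$, i.e.\ $\{u\ge -1\}$ (the inequality in the displayed definition in Subsection~\ref{Regions} is a typo; see the description in Section~\ref{Outside V_T} and the figure). In particular $u$ can vanish inside $V_T$, so $u^2$ and $\taum^2=1+u^2$ are \emph{not} comparable there, and the upgrade from $(\ubar^2,u^2)$ to $(\taup^2,\taum^2)$ is not free---it genuinely requires adding the unweighted $T_0$-energy, which you do. Likewise your claim ``$r\gtrsim|u|$ on the cones'' fails for $u>0$ near the vertex, but this is not needed: the Hardy inequality along the null generators of $C_u$ (respectively $\Cbar_{\ubar}$), in the form $\int r^2|\partial_r|\phi||^2\,dr\gtrsim\int|\phi|^2\,dr$, holds on any half-line $[a,\infty)$ with $a\ge0$ without boundary terms, and this is what recovers the missing $|\phi|^2/r^2$ flux from $|D_L\phi|^2$ (respectively $|D_{\Lbar}\phi|^2$). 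With these two remarks corrected, your argument is complete and coincides with the paper's.
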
 
Note that we did not include the contribution from $|f\cdot\dfrac{1}{u\ubar}D_{K_{0}}(u\ubar\phi)|$ on the right hand side because
\begin{align*}
|\frac{1}{u\ubar}D_{K_{0}}(u\ubar\phi)|\lesssim \ubar^{2}|(D_{L}+\frac{1}{r})\phi|+u^{2}|(D_{\Lbar}-\frac{1}{r})\phi|
\end{align*}
which is already included in $|\dfrac{1}{r}D_{\overline{K}_{0}}(r\phi)|$.}
\begin{corollary}\label{div theorem} Under the assumptions of Theorem \ref{main theorem}, and with $\Omega=r$ (or $u\ubar$)
\Aligns{
Q^{*}(G,\phi)(T)^2\lesssim I.D. &+ Q_{out}(G,\phi)(-1,T)^2+\int_{V_T}\left|\frac{1}{\Omega}D_{\Kbar}(\Omega\phi)\right||D^\alpha D_\alpha\phi|\\
&+\int_{V_T}|\Kbar^\nu(F_{\mu\nu}J^\mu(\phi)+G_{\nu\mu}\nabla^\alpha G_\alpha^{~\mu}+\star G_{\nu\mu}\nabla^\alpha\star {G_\alpha^{~\mu})|.}
}
\end{corollary}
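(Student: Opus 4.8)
The plan is to apply the divergence theorem to the current $T(G,\phi)_{\mu\nu}\Kbar^{\nu}$ over the interior region $V_{T}$ and over the subregions into which it is cut by the cones $C_{u}$ $(u\geq-1)$ and $\Cbar_{\ubar}$ $(\ubar>0)$; equivalently, to add the electromagnetic Morawetz estimate \eqref{Morawetz-F1} --- applied with the two-form $G$ and source one-form $J_{\nu}:=\nabla^{\mu}G_{\mu\nu}$ --- to the scalar Morawetz estimate \eqref{Morawetz-Kbar} --- applied with $f:=D^{\alpha}D_{\alpha}\phi$ --- and then to take suprema over the foliating hypersurfaces. Since the right-hand sides of \eqref{Morawetz-F1} and \eqref{Morawetz-Kbar} are monotone in $t$ and independent of $u$ and $\ubar$, and every boundary integral produced on the left is a nonnegative flux, evaluating at $t=T$ and supremising the $\Sigma_{t}$, $C_{u}$ and $\Cbar_{\ubar}$ contributions separately against the common right-hand side reconstructs $Q^{*}(G)(T)^{2}+Q^{*}(\phi)(T)^{2}$, which dominates $Q^{*}(G,\phi)(T)^{2}$.

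First I would dispose of the boundary terms. On $\Sigma_{0}$ one writes $\Kbar\sim(1+r^{2})(L+\Lbar)$ and uses the null-frame identities $T(G)(\Lbar,\Lbar)=2|\alphabar|^{2}$, $T(G)(L,L)=2|\alpha|^{2}$, $T(G)(\Lbar,L)=2(\rho^{2}+\sigma^{2})$ together with their counterparts for $T(\phi)$, so that the initial flux is comparable to $\int_{\Sigma_{0}}r^{2}\bigl(|\alpha|^{2}+|\alphabar|^{2}+|\rho|^{2}+|\sigma|^{2}\bigr)+\int_{\Sigma_{0}}r^{2}\bigl(|D_{t,x}\phi|^{2}+|\phi|^{2}\bigr)$, which is precisely the quantity $I.D.$ bounded by \eqref{initial estimates} (the $|\phi|^{2}/r^{2}$ parts of $Q_{0}(\phi)$ being absorbed through Hardy's inequality $\int_{\Sigma_{t}}|\phi/r|^{2}\lesssim\int_{\Sigma_{t}}|D\phi|^{2}$, as in the passage preceding \eqref{Morawetz-Kbar}). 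On the inner cone $C_{-1}$ the outward conormal of $V_{T}=\{u\leq-1\}$ is proportional to $du=dt-dr$, whose metric dual $-L$ is past-directed, so $C_{-1}$ is a past-type boundary of $V_{T}$ and its flux passes to the right-hand side as the finite, nonnegative term $Q_{out}(G,\phi)(-1,T)^{2}$ --- precisely the flux controlled later by the exterior analysis in $V^{O}_{T}$.

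Next I would treat the bulk terms. The electromagnetic part produces no deformation-tensor contribution: $T(G)$ is traceless and $\Kbar$ is conformal Killing, so $\tfrac12 T(G)^{\mu\nu}(\LL_{\Kbar}g)_{\mu\nu}=\tfrac12\Omega_{\Kbar}\tr T(G)=0$, leaving only $\Kbar^{\nu}\nabla^{\mu}T(G)_{\mu\nu}=\Kbar^{\nu}\bigl(G_{\nu\mu}\nabla^{\alpha}G_{\alpha}^{~\mu}+\star G_{\nu\mu}\nabla^{\alpha}\star G_{\alpha}^{~\mu}\bigr)$ (the $\star G$ term being present only when $dG\neq0$, hence absent for $G=\LL^{I}_{\Gamma}F$). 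For the scalar part $T(\phi)$ is \emph{not} traceless, and the term $\tfrac12\Omega_{\Kbar}\tr T(\phi)=-2t\,\overline{D^{\alpha}\phi}D_{\alpha}\phi$ is not directly controllable; this is exactly why one passes to the conformal multiplier of Proposition~\ref{LS Morawetz}, which trades $D_{\Kbar}\phi$ for $\tfrac{1}{\Omega}D_{\Kbar}(\Omega\phi)$ with $\Omega\in\{r,u\ubar\}$ at the cost of the source $\bigl|\tfrac{1}{\Omega}D_{\Kbar}(\Omega\phi)\bigr|\,|D^{\alpha}D_{\alpha}\phi|$, the two choices of $\Omega$ being interchangeable here (they differ by a lower-order multiple of $|\phi|$; cf.\ the remark after \eqref{Morawetz-Kbar}). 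Finally, the curvature identity $[D_{\mu},D_{\nu}]=iF_{\mu\nu}$ feeds, through $\nabla^{\mu}T(\phi)_{\mu\nu}=\Re\bigl(\overline{D^{\alpha}D_{\alpha}\phi}\,D_{\nu}\phi\bigr)+\Re\bigl(\overline{D^{\mu}\phi}\,[D_{\mu},D_{\nu}]\phi\bigr)$, the coupling term $\Kbar^{\nu}\Re\bigl(\overline{D^{\mu}\phi}\,[D_{\mu},D_{\nu}]\phi\bigr)=\Kbar^{\nu}F_{\mu\nu}J^{\mu}(\phi)$, with $F$ the curvature of the background connection $D$ (not $G$). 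Assembling the three bulk contributions gives the right-hand side of the claim.

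I expect the only delicate point --- everything else being the routine divergence-theorem bookkeeping already carried out in the proofs of \eqref{Morawetz-F1} and \eqref{Morawetz-Kbar} --- to be the verification that, with the weights in $\Kbar=\tfrac12\bigl((1+u^{2})\Lbar+(1+\ubar^{2})L\bigr)$ and the induced measures on $\Sigma_{t}$, $C_{u}$, $\Cbar_{\ubar}$ and $\Sigma_{0}$, the boundary integrals collapse \emph{exactly} to the weighted norms $Q_{0}$, $Q_{out}$, $Q_{in}$ and to the $r^{2}$-weighted quantities of \eqref{initial estimates}. Note in particular that the $\rho$-component of the $\Sigma_{0}$ flux carries the weight $r^{2}$, which fails to be integrable in the presence of charge; accordingly this corollary is to be applied inside $V_{T}$ only to charge-free $G$ (such as $G=\LL_{\Omega_{ij}}\cdots F$, which is why a rotational vector field is always commuted last), the charged part being handled by the separate Poincar\'e-inequality argument and the $C_{-1}$ flux being carried along until the exterior estimates of $V^{O}_{T}$ supply its bound.
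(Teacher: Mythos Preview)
Your approach is correct and is exactly what the paper does (implicitly): the corollary is simply the sum of \eqref{Morawetz-F1} applied to $G$ with source $J_\nu=\nabla^\mu G_{\mu\nu}$ and \eqref{Morawetz-Kbar} applied to $\phi$ with $f=D^\alpha D_\alpha\phi$, followed by taking suprema in $t$, $u$, $\ubar$.

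Two side remarks in your write-up are off, both traceable to a typo in the paper's stated definition of $V_T$. As the figure, Section~3 (``outside of $V_T$'' is $\{u\leq-1\}$), and Section~4 make clear, $V_T$ is the \emph{interior} $\{t\leq T,\,u\geq-1\}$, not $\{u\leq-1\}$. With the correct region the outward conormal on $C_{-1}$ is $-du$, whose dual $+L$ is \emph{future}-directed, and that is what makes $C_{-1}$ past-type (your implication ``dual past-directed $\Rightarrow$ past-type boundary'' has the sign reversed; compare $\Sigma_T$, outward conormal $+dt$, dual $-\partial_t$ past-directed, yet $\Sigma_T$ is future-type). Relatedly, your worry that the $r^2$-weighted $\rho$ flux on $\Sigma_0$ diverges in the presence of charge does not apply here: $\Sigma_0(T)=\Sigma_0\cap V_T$ is the ball $\{r\leq1\}$, so the initial term is finite for any $G$; the charge obstruction enters only in the exterior analysis of Section~3, where it is handled as you describe.
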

{
\subsection{Isoperimetric, Poincar\'e, and Sobolev estimates}\label{Basic Estimates}
In this subsection we record some standard estimates.
\begin{lemma}[Isoperimetric Inequality]\label{isoperimetric lemma}
Let $S$ be a sphere and $\con{f}$ be the average of $f$ over $S.$ Then
\[\int_S|f-\con{f}|^2\leq C\left(\int_S|\snab f|\right)^2,\]
where $\snab$ is the covariant differentiation on $S,$ and $C$ is independent of the radius of the sphere.
\end{lemma}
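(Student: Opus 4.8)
The plan is to reduce to the unit sphere by scaling and then invoke the classical isoperimetric inequality on the round $2$-sphere together with a layer‑cake argument. First I would record that the asserted inequality is scale invariant. If $S$ has radius $r$ and $\Phi\colon\mathbb{S}^2\to S$ is the dilation $p\mapsto rp$, then with $\tilde f:=f\circ\Phi$ one has $|\snab f|\circ\Phi=r^{-1}|\snab\tilde f|$ and $dV_S=r^2\,dV_{\mathbb{S}^2}$, so that $\int_S|f-\con f|^2=r^2\int_{\mathbb{S}^2}|\tilde f-\con{\tilde f}|^2$ while $\big(\int_S|\snab f|\big)^2=r^2\big(\int_{\mathbb{S}^2}|\snab\tilde f|\big)^2$. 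Both sides carry the same power of $r$, so it suffices to prove the estimate on $\mathbb{S}^2$ with one fixed constant; this is precisely what renders the final $C$ independent of the radius of $S$.

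On $\mathbb{S}^2$ I would next normalize $f$ (assuming, by a density argument, that $f$ is smooth). Subtracting a constant does not affect $\snab f$, and since $\con f$ minimizes $c\mapsto\|f-c\|_{L^2}$ we have $\|f-\con f\|_{L^2}\le\|f-m\|_{L^2}$ for any constant $m$; choosing $m$ to be a median of $f$ we may assume $|\{f>m\}|\le 2\pi$ and $|\{f<m\}|\le 2\pi$. Writing $\psi:=f-m$ and $\psi=\psi_+-\psi_-$, the two parts have disjoint supports of measure at most $2\pi$, the gradients $\snab\psi_\pm$ have a.e.\ disjoint supports (the gradient of a Lipschitz function vanishes a.e.\ on its zero set), and hence $\|\psi\|_{L^2}^2=\|\psi_+\|_{L^2}^2+\|\psi_-\|_{L^2}^2$ and $\|\snab\psi\|_{L^1}=\|\snab\psi_+\|_{L^1}+\|\snab\psi_-\|_{L^1}$. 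It therefore suffices to prove $\|g\|_{L^2(\mathbb{S}^2)}^2\le C\|\snab g\|_{L^1(\mathbb{S}^2)}^2$ for nonnegative $g$ with $|\{g>0\}|\le 2\pi$, since summing this over $g=\psi_\pm$ and using $a^2+b^2\le(a+b)^2$ recovers the bound for $\psi$, hence for $f$.

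For such $g$ I would run the standard layer‑cake computation. Put $\mu(t):=|\{g>t\}|$, a non‑increasing function. The co-area formula gives $\|\snab g\|_{L^1}=\int_0^\infty\mathrm{Per}(\{g>t\})\,dt$, and the isoperimetric inequality on $\mathbb{S}^2$ in the form $\mathrm{Per}(E)^2\ge|E|(4\pi-|E|)$ yields $\mathrm{Per}(\{g>t\})\ge\sqrt{2\pi}\,\mu(t)^{1/2}$ because $\mu(t)\le 2\pi$; hence $\|\snab g\|_{L^1}\gtrsim\int_0^\infty\mu(t)^{1/2}\,dt$. On the other hand $\|g\|_{L^2}^2=2\int_0^\infty t\,\mu(t)\,dt$, and the elementary identity $\big(\int_0^\infty h\big)^2=2\int_0^\infty\big(\int_0^t h\big)h(t)\,dt\ge 2\int_0^\infty t\,h(t)^2\,dt$, valid for any non‑increasing $h\ge 0$ (use $\int_0^t h\ge t\,h(t)$), applied with $h=\mu^{1/2}$ gives $\|g\|_{L^2}^2\le\big(\int_0^\infty\mu^{1/2}\big)^2\lesssim\|\snab g\|_{L^1}^2$. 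Chaining the three steps proves the lemma.

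The only input that is not completely elementary is the sharp isoperimetric inequality on the round sphere, which I would simply quote; the scaling reduction, the passage to a median, the co-area formula, and the one‑dimensional rearrangement bound are all routine. The one point genuinely worth emphasizing — and the reason for organizing the proof around the scaling normalization in the first step — is that the constant in the sphere isoperimetric inequality, and hence in the final estimate, is an absolute number independent of the radius of $S$.
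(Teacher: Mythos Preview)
Your proof is correct. The paper itself does not give a proof of this lemma; it simply cites a reference (Osserman). So there is no proof in the paper to compare against, and your self-contained argument goes well beyond what the paper provides.

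The route you take --- scaling to the unit sphere, replacing the mean by a median so as to split into the positive and negative parts with supports of measure at most $2\pi$, and then running co-area plus the spherical isoperimetric inequality $\mathrm{Per}(E)^2\ge |E|(4\pi-|E|)$ together with the layer-cake/rearrangement bound $\bigl(\int_0^\infty h\bigr)^2\ge 2\int_0^\infty t\,h(t)^2\,dt$ for non-increasing $h$ --- is a standard and clean proof. The scaling step is exactly the right thing to highlight, since the point of the lemma in the paper's applications is precisely that $C$ does not depend on the radius. The only non-elementary input, as you note, is the isoperimetric inequality on $\mathbb{S}^2$, and quoting it is entirely appropriate here.
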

}
{See for instance \cite{Oss1} for a proof.} {The proof of the following Poincar\'e estimates can be found in Section 3 of \cite{CK1}.
\begin{lemma}[Poincar\'e Inequalities]\label{Poincare lemma}
Let $F_{\mu\nu}$ be an arbitrary two form with null decomposition $\alpha,~\alphabar,~\sigma, ~\rho.$ With $\overline{\rho}, \overline{\sigma}$ denoting the averages of $\sigma$ and $\rho$ over $S_{t,r}$ respectively, we have
\begin{align*}
&\int_{S_{t,r}}|\alpha|^{2}\leq \sum_{{\Omega_{ij}}}\int_{S_{t,r}}|\mathcal{L}_{{\Omega_{ij}}}\alpha|^{2},\quad \int_{S_{t,r}}|\alphabar|^{2}\leq\sum_{{\Omega_{ij}}}\int_{S_{t,r}}|\mathcal{L}_{{\Omega_{ij}}}\alphabar|^{2},\\
&\int_{S_{t,r}}\big(|\sigma-\overline{\sigma}|^{2}+r^{2}|\slashed{\nabla}\sigma|^{2}\big)\leq\sum_{{\Omega_{ij}}}\int_{S_{t,r}}|\mathcal{L}_{{\Omega_{ij}}}\sigma|^{2},\\
&\int_{S_{t,r}}\big(|\rho-\overline{\rho}|^{2}+r^{2}|\slashed{\nabla}\rho|^{2}\big)\leq\sum_{{\Omega_{ij}}}\int_{S_{t,r}}|\mathcal{L}_{{\Omega_{ij}}}\rho|^{2}.
\end{align*}
\end{lemma}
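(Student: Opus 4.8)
These are the classical Poincar\'e inequalities of Christodoulou--Klainerman, and the plan is to reduce all four to intrinsic estimates on the round two-sphere $S=S_{t,r}$ of radius $r$ and then invoke spherical harmonics, following \cite{CK1}, Section~3. The vector fields $\Omega_{ij}$, $1\le i<j\le 3$, restrict to Killing fields of $S$ which span $T_pS$ at every point, and the first step is to rewrite the null components of $\mathcal{L}_{\Omega_{ij}}F=L_{\Omega_{ij}}F$ as sphere-intrinsic objects. Since $[\Omega_{ij},L]=[\Omega_{ij},\Lbar]=0$ by \eqref{vectorfield commutators} and $\Omega_{ij}$ is tangent to $S$, restriction to $S$ commutes with $\mathcal{L}_{\Omega_{ij}}$ and the contractions $\iota_L,\iota_{\Lbar}$ commute with $\mathcal{L}_{\Omega_{ij}}$; hence $\alpha(\mathcal{L}_{\Omega_{ij}}F)=\mathcal{L}_{\Omega_{ij}}\alpha$ and $\alphabar(\mathcal{L}_{\Omega_{ij}}F)=\mathcal{L}_{\Omega_{ij}}\alphabar$ as $1$-forms on $S$. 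Writing $F|_S=\sigma\,\slashed{\epsilon}$ with $\slashed{\epsilon}$ the area form of $S$ and $2\rho=F_{L\Lbar}$, and using that a Killing field preserves $\slashed{\epsilon}$, one gets $\sigma(\mathcal{L}_{\Omega_{ij}}F)=\Omega_{ij}\sigma$ and $\rho(\mathcal{L}_{\Omega_{ij}}F)=\Omega_{ij}\rho$, with $\Omega_{ij}$ acting as a genuine scalar derivation. It therefore suffices to establish, for functions $f$ and $1$-forms $\beta$ on $S$,
\[\int_S|f-\overline{f}|^2+r^2\int_S|\slashed{\nabla}f|^2\ \lesssim\ \sum_{i<j}\int_S|\Omega_{ij}f|^2,\qquad \int_S|\beta|^2\ \lesssim\ \sum_{i<j}\int_S|\mathcal{L}_{\Omega_{ij}}\beta|^2.\]

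For the scalar inequality I would use two standard facts on $S$: the pointwise identity $\sum_{i<j}|\Omega_{ij}f|^2=r^2|\slashed{\nabla}f|^2$ (which holds because the $\Omega_{ij}$ form a spanning family of Killing fields, equivalently $\sum_{i<j}\Omega_{ij}^{*}\Omega_{ij}=-r^2\slashed{\Delta}$ as quadratic forms), and the spectral gap of $-\slashed{\Delta}$, whose lowest nonzero eigenvalue is $2/r^2$, giving $\int_S|f-\overline f|^2\le\frac{r^2}{2}\int_S|\slashed{\nabla}f|^2$. Adding $r^2\int_S|\slashed{\nabla}f|^2=\sum_{i<j}\int_S|\Omega_{ij}f|^2$ yields the first inequality (with a universal constant on the right, which the statement suppresses), and applying it to $f=\sigma$ and $f=\rho$ gives the $\sigma$ and $\rho$ bounds of the lemma.

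For the $1$-form inequality I would pass to scalar potentials. Since $S^2$ carries no harmonic $1$-forms, write $\beta=\slashed{\nabla}f+\slashed{\nabla}^{\perp}h$ with $f,h$ of zero mean, so their spherical-harmonic expansions start at $\ell=1$. As $\Omega_{ij}$ is an orientation-preserving isometry of $S$, $\mathcal{L}_{\Omega_{ij}}$ commutes with $\slashed{\nabla}$, with the rotation $\slashed{\nabla}^{\perp}$, and with $\slashed{\Delta}$; then, using the $L^2$-orthogonality of the exact and co-exact summands together with the scalar identity above,
\[\sum_{i<j}\int_S|\mathcal{L}_{\Omega_{ij}}\beta|^2=r^2\int_S\big(|\slashed{\Delta}f|^2+|\slashed{\Delta}h|^2\big),\qquad \int_S|\beta|^2=\int_S\big(|\slashed{\nabla}f|^2+|\slashed{\nabla}h|^2\big),\]
and the elementary estimate $\ell(\ell+1)\le\tfrac12\,\ell^2(\ell+1)^2$ for $\ell\ge1$ gives $\int_S|\slashed{\nabla}f|^2\le\frac{r^2}{2}\int_S|\slashed{\Delta}f|^2$, and similarly for $h$, closing the estimate. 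Applying this with $\beta=\alpha$ and $\beta=\alphabar$ finishes the proof.

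The step I expect to require the most care is the componentwise identification in the first paragraph: that the null decomposition of the ambient Lie derivative $L_{\Omega_{ij}}F$ coincides with the intrinsic Lie derivatives on the spheres, and with plain scalar derivatives of $\rho$ and $\sigma$, even though the spherical frame $e_A$ is not itself rotation-invariant. This hinges on the fact that the flow of $\Omega_{ij}$ preserves each $S_{t,r}$ together with the null pair $L,\Lbar$ and the area form $\slashed{\epsilon}$; once this bookkeeping is in place, the remaining computations are classical spherical harmonics.
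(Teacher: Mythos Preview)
The paper does not give its own proof here but simply refers to Section~3 of \cite{CK1}; your argument is precisely the Christodoulou--Klainerman one, reducing everything to the intrinsic Poincar\'e/spectral-gap estimates on $S^2$ after identifying $\alpha(\mathcal{L}_{\Omega_{ij}}F)$, $\rho(\mathcal{L}_{\Omega_{ij}}F)$, etc.\ with the sphere-intrinsic Lie derivatives. Your proof is correct, and your parenthetical remark that the scalar inequalities for $\rho$ and $\sigma$ really hold only with a universal constant on the right (since $\sum_{i<j}|\Omega_{ij}f|^2=r^2|\slashed\nabla f|^2$ exactly) is well taken; this harmless constant is suppressed in the statement.
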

\begin{remark}\label{Poincare remark}
Since in our case the magnetic charge vanishes, $\overline{\sigma}=0$.
\end{remark}
For completeness we also record the following standard estimate. See \cite{CK1} for references.
\begin{lemma}\label{Klainerman-Sobolev Lemma}
For any $f\in C^\infty(\R^{1+3})$
\[\taup\taum^{\frac{1}{2}}|f(t,x)|\leq {C\sum_{\stackrel{|I|{\leq2}}{\Gamma\in\mathbb{L}}}\|\Gamma^If(t,\cdot)\|_{L^2_x}.}\]
\end{lemma}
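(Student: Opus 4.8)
The result is the classical Klainerman--Sobolev inequality, and the plan is to run the standard localization-and-rescaling argument; I only sketch it. By time reversal we may assume $t\ge 0$, and it suffices to fix $x\in\R^3$ with $|x|=r$ and establish the pointwise bound $\taup\taum^{1/2}|f(t,x)|\lesssim\sum_{|I|\le2}\|\Gamma^If(t,\cdot)\|_{L^2_x}$, where on $\Sigma_t$ one has $\taup\sim 1+t+r$ and $\taum\sim 1+|t-r|$. If $1+t+r\lesssim 1$ the estimate is just the three-dimensional Sobolev embedding $H^2\hookrightarrow L^\infty$ on a fixed ball, so assume $1+t+r\gg1$. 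I would distinguish two regimes: (a) $\taum\sim\taup$, so that $\taup\taum^{1/2}\sim\taup^{3/2}$; and (b) $\taum\ll\taup$, which forces $t\sim r$ and $\taup\sim r\gg\taum$.

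In regime (a) I would first prove the pointwise commutator estimate $\taup\,|\partial_{t,x}f(t,y)|\lesssim\sum_{\Gamma\in\mathbb{L}}|\Gamma f(t,y)|$ on the region of interest. Restricted to $\Sigma_t$ one has $Sf=t\,\partial_tf+y^i\partial_if$ and $\Omega_{i0}f=y_i\partial_tf+t\,\partial_if$; solving this linear system expresses the coordinate derivatives of $f$ through $S$, the $\Omega_{i0}$, and the coefficients $y^i$, whose size is a small multiple of $\taup$ once one works on a small enough ball, so an absorption argument closes the estimate when $r\ll\taup$ (ball centred at the origin); when instead $r\sim\taup$ one has $|t-r|\sim r$, hence $|t^2-r^2|\sim\taup^2$, and one uses the nondegenerate combinations $tS-rB_{\mathrm{rad}}=(t^2-r^2)\partial_t$ and $tB_{\mathrm{rad}}-rS=(t^2-r^2)\partial_r$ with $B_{\mathrm{rad}}=\sum_i(x_i/r)\Omega_{i0}=r\,\partial_t+t\,\partial_r$ (ball centred at $x$). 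Angular derivatives are controlled by $r\,|\snab f|\lesssim\sum_{i<j}|\Omega_{ij}f|$. Iterating these (using closure of $\mathbb{L}$ under Lie brackets and $|\Gamma\taup|\lesssim\taup$) gives $\taup^{\,k}|\partial^kf|\lesssim\sum_{|I|\le k}|\Gamma^If|$ for $k\le2$ on a ball $B$ of radius $\sim\taup$ on which $\taup\sim\taum$ is essentially constant. Rescaling $B$ to the unit ball and applying $H^2\hookrightarrow L^\infty$ yields $|f(t,x)|\lesssim\sum_{k\le2}\taup^{\,k-3/2}\|\partial^kf\|_{L^2(B)}$, hence $\taup\taum^{1/2}|f(t,x)|\sim\taup^{3/2}|f(t,x)|\lesssim\sum_{k\le2}\|\taup^{\,k}\partial^kf\|_{L^2(B)}\lesssim\sum_{|I|\le2}\|\Gamma^If\|_{L^2_x}$, the $k=0$ term carrying no weight and being simply $\|f\|_{L^2(B)}$.

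Regime (b) is the heart of the argument and requires an anisotropic localization. Here $t\sim r$, and I would localize to the curved slab $\Sigma''=\{\,y=\rho\omega:\ |\rho-r|\le\tfrac18\taum,\ \omega\ \text{in the spherical cap of radius}\ \tfrac18\ \text{about}\ x/r\,\}\subset\Sigma_t$, on which $\taup(y)\sim r$ and $\taum(y)\sim\taum$, and whose Euclidean volume is $\sim r^2\taum$. The pointwise inputs are $\taum\,|\partial_\rho f|\lesssim\sum_\Gamma|\Gamma f|$ --- from $(t^2-\rho^2)\partial_\rho f=t\,B_{\mathrm{rad}}f-\rho\,Sf$ together with $|t^2-\rho^2|\gtrsim\taum\,r$ on $\Sigma''$, supplemented by the crude bound $|\partial_\rho f|\lesssim\sum_\Gamma|\Gamma f|$ when $\taum\lesssim1$ --- and $r\,|\snab f|\lesssim\sum_{i<j}|\Omega_{ij}f|$, both iterated to second order. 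Flattening $\Sigma''$ onto the unit box $[-1,1]\times B_1^2$ by rescaling the radial direction by $\taum$ and the two angular directions by $r$, the coordinate derivatives of the pulled-back function $g$ correspond to $\taum\,\partial_\rho$ and to the unit angular fields $\sim r^{-1}\Omega_{ij}$, while the $L^2$ norm acquires the Jacobian factor $(r^2\taum)^{-1/2}$. Sobolev embedding on the box then gives $|f(t,x)|\lesssim(r^2\taum)^{-1/2}\sum\|h\|_{L^2(\Sigma'')}$, the sum over $h=f$ and the images of $f$ under products of at most two of the operators $\taum\,\partial_\rho$ and $\Omega_{ij}$; multiplying by $\taup\taum^{1/2}\sim r\,\taum^{1/2}$ cancels the Jacobian exactly, $r\,\taum^{1/2}\,(r^2\taum)^{-1/2}=1$, and the right-hand side is $\lesssim\sum_{|I|\le2}\|\Gamma^If\|_{L^2_x}$ by the pointwise estimates above.

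I expect the main obstacle to be the bookkeeping in regime (b): the anisotropic radial and angular scales ($\taum$ and $r$) must be chosen so that the rescaling Jacobian precisely matches the target weight $\taup\taum^{1/2}$, and one must simultaneously verify that $\taum\,\partial_\rho$, $r^{-1}\Omega_{ij}$ and all their second-order iterates --- including the terms produced by commutators and by differentiating the weights (one checks $\Omega_{ij}\taum=0$, $|S\taum|+|\Omega_{i0}\taum|\lesssim\taum$, $|\Gamma\omega^i|\lesssim1$ on $\Sigma''$, and $\taum|\partial_t f|\lesssim\sum_\Gamma|\Gamma f|$ there via $(t-\rho)(\partial_t-\partial_r)f=(S-B_{\mathrm{rad}})f$) --- are controlled by $\sum_{|I|\le2}|\Gamma^If|$ on $\Sigma''$. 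Everything else is routine: the embedding $H^2\hookrightarrow L^\infty$ in three dimensions, the null- and polar-frame expressions for $S$ and $\Omega_{\mu\nu}$ recorded earlier, and closure of $\mathbb{L}$ under brackets.
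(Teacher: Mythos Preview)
Your sketch is a correct outline of the standard localization-and-rescaling proof of the Klainerman--Sobolev inequality. However, the paper does not actually prove this lemma: it merely records it as a ``standard estimate'' and refers the reader to \cite{CK1}. So there is nothing to compare against --- you have supplied a proof where the paper gives only a citation, and the argument you give (splitting into $\taum\sim\taup$ versus $\taum\ll\taup$, rescaling a ball in the former case and an anisotropic slab of radial width $\sim\taum$ and angular width $\sim 1$ in the latter, then invoking $H^2\hookrightarrow L^\infty$) is the classical one.
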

In our applications of the this estimate we will need to commute the Lie derivatives $\LL_\Gamma$ with the bundle covariant derivatives $D_L$ and $D_{\Lbar}$ and {the following direct corollary of the definition of the energies and {Lemma \ref{KS commutator estimates 1} will} be useful to this end.
\begin{corollary}\label{KS commutator estimates 2}[of Lemma \ref{KS commutator estimates 1}]
Suppose $F$ satisfies
\Aligns{
\taup^2(|\alpha(F)|+|\alpha(\LL_\Gamma F)|+|\rho(F)|+|\rho(\LL_\Gamma(F))|+|\sigma(F)|+\sigma(\LL_\Gamma F))+\taup\taum(|\alpha(F)|+|\alpha(\LL_\Gamma F)|)\leq C,
}
for all $\Gamma\in\mathbb{L}.$ Then for any scalar field $\phi$
\Aligns{
&\sum_{\stackrel{|I|=1,2}{\Gamma\in\mathbb{L}}}\left(\|\LL^I_{\Gamma}(\taum D_{\Lbar}\phi)-\taum D_{\Lbar}\LL^I_{\Gamma}\phi\|_{L^2_x}+\|\LL^I_{\Gamma}(\taup D_{L}\phi)-\taup D_{L}\LL^I_{\Gamma}\phi\|_{L^2_x}\right)\lesssim CQ^*_2(\phi).
}
\end{corollary}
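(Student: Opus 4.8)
The plan is to reduce the $L^2_x$ bound directly to the pointwise commutator estimates of Lemma \ref{KS commutator estimates 1}, then recognize each surviving term on the right-hand side of that lemma as a quantity already controlled by one of the energy norms in \eqref{energies}. First, for $|I|=1$, apply the pointwise bounds \eqref{Sob-Klain com1}--\eqref{Sob-Klain com6} (the only $\Gamma\in\mathbb{L}$ that occur are $S$, $\Omega_{ij}$, $\Omega_{i0}$; the cases $\Gamma=\partial_\mu$ are not in $\mathbb{L}$ but present no difficulty either) to the two differences $\LL_\Gamma(\taum D_{\Lbar}\phi)-\taum D_{\Lbar}\LL_\Gamma\phi$ and $\LL_\Gamma(\taup D_L\phi)-\taup D_L\LL_\Gamma\phi$. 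The right-hand sides are linear combinations of three types of terms: (a) $\taum|D_{\Lbar}\phi|$ or $\taup|D_L\phi|$ plus $\taup|\sD\phi|$ or $\taum|\sD\phi|$ and lower-order pieces like $r^{-1}|D_{\Lbar}\phi|$; (b) curvature--field products of the form $(\taup\taum|\alphabar|+\taup^2(|\alpha|+|\rho|+|\sigma|))|\phi|$.

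For type (a), the weights match exactly the weights appearing in the integrand of $Q_0(\phi)(t,T)^2$ once one rewrites $D_{\Lbar}\phi = (D_{\Lbar}-\tfrac1r)\phi + \tfrac1r\phi$ and $D_L\phi=(D_L+\tfrac1r)\phi-\tfrac1r\phi$, so that $\|\taum D_{\Lbar}\phi\|_{L^2_x}^2 + \|\taup D_L\phi\|_{L^2_x}^2 + \|(\taup+\taum)\sD\phi\|_{L^2_x}^2 \lesssim Q_0(\phi)(t,T)^2 \leq Q^*(\phi)(T)^2$; the extra $r^{-1}$ only helps. For type (b), use the hypothesis: by assumption $\taup\taum|\alphabar(F)|$, $\taup^2|\alpha(F)|$, $\taup^2|\rho(F)|$, $\taup^2|\sigma(F)|$ and the $\LL_\Gamma F$ analogues are all bounded by $C$ pointwise, so the curvature--field products are $\lesssim C|\phi|$ (resp.\ $\lesssim C\sum_\Gamma|\LL^I_\Gamma\phi|$ when the $\taup/r$ factor is absorbed using $\taup/r\lesssim 1$ on the relevant region, or kept as $\tfrac{\taup}{r}|\LL^I_\Gamma\phi|$ and bounded by the $\tfrac{|\phi|}{r}$-weighted term of $Q_0$). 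Then $\|C\,\tfrac{\phi}{r}\cdot r\|_{L^2_x}$ — i.e.\ $C\|\phi\|_{L^2_x}$ — is controlled since $\|\tfrac{\taup}{r}\phi\|_{L^2_x}$ and $\|\tfrac{\taup}{r}\LL^I_\Gamma\phi\|_{L^2_x}$ are part of $Q_0(\phi)$ and $Q_{0,|I|}(\phi)$, hence $\lesssim Q^*_2(\phi)$.

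For $|I|=2$ one applies \eqref{Sob-Klain com7} instead. Its right-hand side is already organized into precisely the same shape: a piece $\taum(|D_{\Lbar}\phi|+\sum_\Gamma|D_{\Lbar}\LL_\Gamma\phi|)+\taup(|D_L\phi|+\sum_\Gamma|D_L\LL_\Gamma\phi|)$, which after the same $\pm\tfrac1r$ rewriting is bounded in $L^2_x$ by $Q_0(\phi)(t,T)+Q_{0,1}(\phi)(t,T)\lesssim Q^*_2(\phi)$; a piece $\big(\sum_{|I|\le1,\Gamma}\tfrac{\taup}{r}|\LL^I_\Gamma\phi|\big)\big(\taup\taum|\alphabar(F)|+\taup^2(\cdots)\big)$, where the second factor is $\lesssim C$ by hypothesis and the first factor is $L^2_x$-bounded by $Q_{0}(\phi)+Q_{0,1}(\phi)$; and a piece $|\phi|\sum_\Gamma\big(\taup\taum|\alphabar(\LL_\Gamma F)|+\taup^2(\cdots)\big)$, again $\lesssim C|\phi|$ by hypothesis, $L^2_x$-bounded by $Q_0(\phi)$. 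Summing over $\Gamma\in\mathbb{L}$ (a finite set) and over $|I|=1,2$ and collecting constants yields the claimed bound $\lesssim CQ^*_2(\phi)$.

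The only real point requiring care — the ``main obstacle'' such as it is — is bookkeeping: making sure that every weighted quantity produced by Lemma \ref{KS commutator estimates 1} genuinely appears (with no worse a weight) inside one of the energy densities in \eqref{energies}, in particular handling the mismatch between $D_{\Lbar}\phi$ and the ``good'' derivative $(D_{\Lbar}-\tfrac1r)\phi$ via the triangle inequality and the $\tfrac{|\phi|}{r}$ term, and confirming that the factor $\tfrac{\taup}{r}$ multiplying $|\LL^I_\Gamma\phi|$ in \eqref{Sob-Klain com7} is exactly matched by the $(\taup^2+\taum^2)\tfrac{|\phi|^2}{r^2}$ contribution to $Q_0$. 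There is no analytic difficulty beyond Lemma \ref{KS commutator estimates 1} and the definitions; the corollary is a direct translation of pointwise bounds into the energy language.
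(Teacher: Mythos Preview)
Your proposal is correct and follows exactly the route the paper indicates: the corollary is stated there as a ``direct corollary of the definition of the energies and Lemma \ref{KS commutator estimates 1}'' with no further argument, and what you have written is precisely the unpacking of that sentence. One small slip: $T_\mu=\partial_\mu$ \emph{does} belong to $\mathbb{L}$ as defined in the paper (the Lie algebra generated by the commutator vector fields $T_\mu,\,S,\,\Omega_{\mu\nu}$), so you should include \eqref{Sob-Klain com8}--\eqref{Sob-Klain com11} among the $|I|=1$ cases rather than treat them as an aside; this changes nothing in the argument since those estimates are already available and of the same form.
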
}
We will also need the following $L^p$ Sobolev estimates from \cite{Shu1}.
\begin{lemma}\label{L4 Sobolev lemma}
Let $u\geq-1$ and denote by $\tilde{S}_{u,r}$ and $\tilde{S}_{\ubar,r}$ the spheres of radius $r$ on the null hypersurfaces $C_u$ and ${\Cbar}_{\ubar}$ respectively.
\Aligns{
\left(\int_{C_u(T)}r^6|f|^6\right)^{1/6}+\sup_{\tilde{S}_{u,r}\subseteq C_u(T)}\left(\int_{\tilde{S}_{u,r}}r^4|f|^4\right)^{1/4}\lesssim& \left(\int_{C_{u}}(|f|^2+r^2|{L f}|^2+r^2|\snab f|^2)\right)^{1/2}\\
&+\left(\int_{\Sigma_0}(|f|^2+(1+r^2)|\nabla f|^2)\right)^{1/2}
}
and
\Aligns{
\left(\int_{{\Cbar}_{\ubar}(T)}{r^4\taum^2}|f|^6\right)^{1/6}+\sup_{\tilde{S}_{u,r}\subseteq {\Cbar}_{\ubar}(T)}\left(\int_{\tilde{S}_{u,r}}{r^2\taum^2}|f|^4\right)^{1/4}\lesssim& \left(\int_{{\Cbar}_{\ubar}}(|f|^2+{\taum^2}|{\Lbar f}|^2+r^2|\snab f|^2)\right)^{1/2}\\
&+\left(\int_{\Sigma_0}(|f|^2+(1+r^2)|\nabla f|^2)\right)^{1/2}.
}
\end{lemma}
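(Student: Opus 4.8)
The plan is to identify each null cone, together with its generator derivative, its angular derivative, and its area measure, with Euclidean space $\R^3$ (or an exterior domain therein), and then to read off both inequalities from the classical estimates $\dot H^1(\R^3)\hookrightarrow L^6(\R^3)$ and $\dot H^1(\R^3)\hookrightarrow L^4(S)$ for a $2$-sphere $S\subset\R^3$, applied to $rf$ rather than to $f$. I describe the outgoing cone in detail; the incoming one is analogous.

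First I would coordinatize $C_u$ by $(r,\omega)$ along the null generators, so that a point is $(u+r,r\omega)$, the generator derivative acts by $L$, and the area measure occurring in the definition of $Q_{out}$ is $r^2\,\ud r\,\ud\omega$. The null cone carries no non-degenerate intrinsic metric, but this is immaterial: only the measure $r^2\,\ud r\,\ud\omega$ and the operators $L$ and $\snab$ will enter. The map $(r,\omega)\mapsto y:=r\omega$ identifies $C_u$ with $\R^3$ when $u\ge0$, and with the exterior domain $\R^3\setminus\overline{B_{-u}}$ when $-1\le u<0$ (then $-u\le1$ and the boundary sphere $\partial B_{-u}$ is $S_{0,-u}\subset\Sigma_0$); it carries $r^2\,\ud r\,\ud\omega$ to the Euclidean volume $\ud y$, and under it a function $f$ on $C_u$ becomes a function of $y$ with $Lf=\partial_{|y|}f$ and $\snab f$ equal to the angular part of $\nabla_y f$, so that $|\nabla_y f|^2=|Lf|^2+|\snab f|^2$.

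Next I would apply the Euclidean inequalities to $rf$ (with $r=|y|$). From $\nabla_y(rf)=f\,\tfrac{y}{|y|}+r\nabla_y f$ one gets $\bigl|\nabla_y(rf)\bigr|^2\lesssim|f|^2+r^2|\nabla_y f|^2$, hence
\[
\int_{\R^3}\bigl|\nabla_y(rf)\bigr|^2\,\ud y\ \lesssim\ \int_{C_u}\bigl(|f|^2+r^2|Lf|^2+r^2|\snab f|^2\bigr),
\]
while $\int_{\R^3}r^6|f|^6\,\ud y=\int_{C_u}r^6|f|^6$ and $\int_{\partial B_r}r^4|f|^4\,\ud S=\int_{\tilde S_{u,r}}r^4|f|^4$. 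For $u\ge0$, applying $\dot H^1(\R^3)\hookrightarrow L^6(\R^3)$ and the trace bound $\|h\|_{L^4(\partial B_\rho)}\lesssim\|\nabla_y h\|_{L^2(\R^3)}$ — which is scale invariant, hence holds with a constant independent of the radius $\rho$ — to $h=rf$ yields the two asserted estimates with no $\Sigma_0$ term; note that the weight $|f|^2$ on the right is precisely what the derivative falling on the factor $r=|y|$ produces, and that near the vertex $r=0$ the weights $r^6$, $r^4$ force the left-hand side to vanish, so nothing need be estimated there. For $-1\le u<0$ one uses instead the versions of these inequalities on the exterior domain $\R^3\setminus\overline{B_{-u}}$ carrying a boundary-trace term on $\partial B_{-u}=S_{0,-u}$; since $-u\le1$, that term is dominated by $\int_{S_{0,-u}}\bigl(|f|^2+r^2|\snab f|^2\bigr)$, which in turn is $\lesssim\int_{\Sigma_0}\bigl(|f|^2+(1+r^2)|\nabla f|^2\bigr)$ by a radial trace estimate and Hardy's inequality on $\Sigma_0$ (the weights $1$ and $1+r^2$ being comparable near $S_{0,-u}$). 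Since $C_u(T)\subset C_u$, all suprema and integrals over the truncated cone are bounded by the same right-hand sides, and the outgoing case follows.

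For $\Cbar_{\ubar}$ the same scheme applies with $L$ replaced by the generator $\Lbar$ and with the extra weight $\taum$, which along a generator equals $\sqrt{1+(\ubar-2r)^2}$ and is $1$-Lipschitz in $r$; one runs the argument with $r\taum f$ in place of $rf$, handling the part of the cone on which $\taum\sim r$ exactly as before and the remaining bounded part (near the ``throat'' where $\taum\sim1$) by the Sobolev and trace inequalities on a ball. The step requiring the most care — and essentially the only delicate one — is the weight bookkeeping: checking that the powers of $r$ and $\taum$ on the left are exactly those generated by differentiating the prefactor in $rf$ (respectively $r\taum f$), that the vertex/boundary alternative at the small end of the cone is disposed of as above, and that the trace constant is uniform in the sphere radius. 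Since none of this requires differentiating $f$ twice, only first derivatives of $f$ appear on the right-hand side.
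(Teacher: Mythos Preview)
The paper does not prove this lemma at all; it simply cites \cite{Shu1}. So there is no ``paper's proof'' to compare with, and the question is whether your argument stands on its own.

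Your treatment of the outgoing cone $C_u$ is correct and clean: the identification $(r,\omega)\mapsto r\omega$ carries the cone measure to Lebesgue measure, the generator derivative $L$ to the Euclidean radial derivative, and the substitution $h=rf$ produces exactly the weights in the statement. The scale-invariance of the trace $\dot H^1(\R^3)\to L^4(\partial B_\rho)$ is the right observation, and the exterior-domain/boundary-term discussion for $-1\le u<0$ is adequate.

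For the incoming cone $\Cbar_{\ubar}$, however, there is a genuine gap. Your proposed substitution $h=r\taum f$ does not reproduce the stated weights: one has $|r\taum f|^6=r^6\taum^6|f|^6$, not the $r^4\taum^2|f|^6$ appearing on the left-hand side, and $|r\taum f|^4=r^4\taum^4|f|^4$, not $r^2\taum^2|f|^4$. No single prefactor of the form $r^a\taum^b$ will make both the $L^6$ and the $L^4$ weights come out right simultaneously. Moreover, your regional split ``$\taum\sim r$'' versus ``$\taum\sim 1$'' does not match the geometry of $\Cbar_{\ubar}(T)$: inside $V_T$ one has $u\ge -1$, so for large $\ubar$ the radius $r\sim\ubar$ is essentially constant along the cone while $\taum$ ranges from $1$ up to $\sim T$, and the locus $\taum\sim r$ need not be present at all. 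The difficulty is real: on $\Cbar_{\ubar}$ the statement trades an $r$ for a $\taum$ on \emph{both} sides (smaller left-hand side, but also the weaker $\taum^2|\Lbar f|^2$ rather than $r^2|\Lbar f|^2$ on the right), and this cannot be obtained from the flat $\dot H^1\hookrightarrow L^6$ applied to a single rescaled function. A correct argument combines the spherical Gagliardo--Nirenberg (or isoperimetric) inequality, which controls $\int_{S}|f|^4$ by $r^{-2}\bigl(\int_S|f|^2\bigr)^2+\bigl(\int_S|f|^2\bigr)\bigl(\int_S|\snab f|^2\bigr)$, with a one-dimensional integration of $\partial_u\int_{S}|f|^2$ along the generators; the Cauchy--Schwarz in that 1D step is what produces the $\taum^2$ weight on $\Lbar f$ and makes the remaining integral $\int\taum^{-2}\,du$ finite. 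You should carry this out rather than assert that the bookkeeping ``works out''.
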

We next establish an improved decay estimate for $(D_L+\frac{1}{r})\phi,$ Recall that for large $r,$ $C^\infty_0$ solutions of the wave equation $\Box f=0$ in $\R^{1+3}$ satisfy the decay
\[|(L+\frac{1}{r})f|\lesssim\taup^{-3}\taum^{1/2},\]
(of course assuming that appropriate energy norms of $f$ are finite) which is faster than the {$\taum^{-1/2}\taup^{-2}$ decay for $Lf$} guaranteed by Lemma \ref{Klainerman-Sobolev Lemma}. This can be seen for example by writing $\Box f=0$ as
\Aligns{
0&=-\Lbar L f+\snab^A\snab_A f+\frac{1}{r}Lf-\frac{1}{r}\Lbar f\\
&=-\Lbar((L+\frac{1}{r})f)+\snab^A\snab_A f+\frac{1}{r}Lf+\Lbar(\frac{1}{r})f.
}
Now in the expression above all the terms except the first enjoy the decay $\taup^{-3}\taum^{-1/2}$ in the region $r\gtrsim t.$ Integrating in the $u$ direction we get the claimed decay for $(L+\frac{1}{r}) f.$ Our goal in the next lemma is to show that the same conclusion holds for solutions of $D^\mu D_\mu\phi=0.$ {While it is possible to derive this decay rate for solutions of $D^\mu D_\mu\phi=0,$ we will prove only the slower decay of $\taup^{-5/2},$ because this is the best possible rate when considering the inhomogeneous equation satisfied by $\LL_\Omega \phi,$ which is the relevant term when establishing higher regularity. See Remark \ref{L-char decay remark} below.}
{\begin{lemma}\label{L-char decay}
Suppose $\phi$ is a solution of $D^\mu D_\mu\phi=0,$ and that $F$ satisfies
\Aligns{
\taup^{5/2}|\alpha|+\taum^{3/2}\taup|\alphabar|+\taup^2|\rho|\leq C.
}
Then in the region $r\geq t/2,~t\geq1$
\[|(D_L+\frac{1}{r})\phi|\lesssim\taup^{-5/2}\]
where the implicit constants depend only on $Q_3(\phi)$ and $C.$
\end{lemma}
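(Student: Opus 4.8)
The strategy is to mimic the model wave-equation computation sketched immediately before the lemma, but now for the covariant equation $D^\mu D_\mu\phi=0$ and paying attention to the terms involving $F$. First I would write the equation in the null frame. Using the Ricci coefficients \eqref{Ricci coefficients} one has the identity
\begin{align*}
0=D^\mu D_\mu\phi=-D_{\Lbar}D_L\phi+\snab^A\snab_A\phi+\frac{1}{r}D_L\phi-\frac{1}{r}D_{\Lbar}\phi+(\text{curvature terms}),
\end{align*}
where the curvature terms are of the form $iF_{\Lbar L}\phi=2i\rho\,\phi$ coming from the commutator $[D_{\Lbar},D_L]$. Rearranging exactly as in the scalar case gives
\begin{align*}
D_{\Lbar}\Big((D_L+\tfrac{1}{r})\phi\Big)=\snab^A\snab_A\phi+\frac{1}{r}D_L\phi+\Lbar\big(\tfrac{1}{r}\big)\phi+2i\rho\,\phi+\frac{1}{r}\cdot(\text{angular }A\text{-terms}).
\end{align*}
The plan is then to integrate this transport equation for $(D_L+\frac1r)\phi$ along the incoming direction $\Lbar$, i.e. backward in $u$ from the cone $r=t/2$ (where $\taup\sim\taum$ and Lemma \ref{Klainerman-Sobolev Lemma} already gives $|(D_L+\frac1r)\phi|\lesssim\taup^{-5/2}$ as initial data), out to large $r$.

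Next I would estimate each term on the right-hand side in the region $r\geq t/2$. The Klainerman--Sobolev estimate Lemma \ref{Klainerman-Sobolev Lemma}, together with Corollary \ref{KS commutator estimates 2} to move the Lie derivatives $\LL_\Gamma$ past $D_L,D_{\Lbar}$, and the energy bounds built into $Q_3(\phi)$, give pointwise control $|\phi|\lesssim\taup^{-1}\taum^{-1/2}$, $|\snab\phi|\lesssim\taup^{-2}\taum^{-1/2}$ (hence $|\snab^A\snab_A\phi|\lesssim\taup^{-3}\taum^{-1/2}$), and $|D_L\phi|\lesssim\taup^{-2}\taum^{-1/2}$, so $\frac1r D_L\phi\lesssim\taup^{-3}\taum^{-1/2}$ and $\Lbar(\frac1r)\phi\lesssim r^{-2}|\phi|\lesssim\taup^{-3}\taum^{-1/2}$. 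The genuinely new term is the curvature term $\rho\,\phi$: the hypothesis $\taup^2|\rho|\leq C$ combined with $|\phi|\lesssim\taup^{-1}\taum^{-1/2}$ gives $|\rho\,\phi|\lesssim\taup^{-3}\taum^{-1/2}$ as well. So every term on the right-hand side decays at least like $\taup^{-3}\taum^{-1/2}$ in the exterior region. Integrating $D_{\Lbar}$ in $u$ (note $D_{\Lbar}=\partial_t-\partial_r$ in null coordinates acts essentially as $-2\partial_u$ along fixed $\ubar$, up to the $iA$ correction which is itself controlled by the energy and can be absorbed or handled by working with the modulus $|(D_L+\frac1r)\phi|$ via the diamagnetic inequality $|\Lbar|(D_L+\frac1r)\phi||\leq |D_{\Lbar}(D_L+\frac1r)\phi|$), one picks up $\int r^{-3}\taum^{1/2}\,du\lesssim\taup^{-3}\cdot\taup=\taup^{-2}$ at worst, which is even better than needed. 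Combining with the $\taup^{-5/2}$ bound on the initial cone $r=t/2$ yields $|(D_L+\frac1r)\phi|\lesssim\taup^{-5/2}$ throughout $r\geq t/2$, $t\geq 1$.

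The main obstacle I anticipate is bookkeeping the angular $A$-terms: when one expands $\snab^A\snab_A\phi$ and the transport equation in terms of $D$ rather than $\nabla$, the potential $A_\mu$ appears through $D_{e_A}=\nabla_{e_A}+iA_A$, and one must be careful that the pieces of $A$ that are not gauge-invariant are nonetheless controlled — here the gauge-invariant formulation pays off because ultimately everything reduces to the curvature components $\alpha,\alphabar,\rho,\sigma$ of $F$ (whose decay is hypothesized) together with $\phi$ and its covariant derivatives (controlled by $Q_3(\phi)$ via Klainerman--Sobolev and the commutator estimates). A second, more minor, point is justifying the integration along $\Lbar$: one should work with $|(D_L+\frac1r)\phi|$ directly and use the diamagnetic inequality so that the $iA$ term in $D_{\Lbar}$ drops out of the differential inequality for the modulus, leaving a clean scalar transport inequality $|\Lbar\,|(D_L+\tfrac1r)\phi||\lesssim \taup^{-3}\taum^{-1/2}$ that integrates immediately. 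Finally, one checks the constant depends only on $Q_3(\phi)$ and $C$: this is automatic since all intermediate pointwise bounds were expressed in those terms.
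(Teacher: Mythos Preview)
Your proposal is correct and follows essentially the same route as the paper: derive the transport equation $D_{\Lbar}\big((D_L+\tfrac1r)\phi\big)=M$ with $|M|\lesssim\taup^{-3}\taum^{-1/2}$, pass to the modulus via the diamagnetic inequality, and integrate along $\Lbar$. The only substantive difference is the choice of endpoint for the integration---you start from the cone $r=t/2$ (using Klainerman--Sobolev there), while the paper starts from $\Sigma_0$ where $\taup\sim\taum\sim r$---and your concern about gauge-dependent ``angular $A$-terms'' is unnecessary: working with the bundle covariant derivative $\sD$ rather than $\snab$ keeps everything gauge-invariant, and the paper bounds $\sD^A\sD_A\phi$ by writing $e_B=\tfrac{c^{ij}_B}{r}\Omega_{ij}$ so that $|\sD_A\sD_B\phi|\lesssim\tfrac{1}{r}|\sD_A\LL_{\Omega_{ij}}\phi|$.
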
}
\begin{proof}
We write the equation for $\phi$ as
\Aligns{
0&=-\frac{1}{2}D_{\Lbar}D_{L}\phi-\frac{1}{2}D_LD_{\Lbar}\phi+D^BD_B\phi\\
&=-D_{\Lbar}D_L\phi-i\rho(F)\phi+D^BD_B\phi.
}
Now note that {with $\chi$ and $\chibar$ denoting the second fundamental forms on the spheres with respect to $L$ and $\Lbar$ respectively}
\Aligns{
(DD\phi)(e_B,e_B)&=D_{e_B}(D_{e_B}\phi)-D_{\nabla_{e_B}e_B}\phi\\
&=\sD_{e_B}(\sD_{e_B}\phi)-\sD_{\snab_{e_B}e_B}\phi-D_{(\chi_{BB} L+\chibar_{BB}\Lbar)}\phi=(\sD\sD\phi)(e_B,e_B)-\frac{1}{r}D_{\Lbar}\phi+\frac{1}{r}D_L\phi,
}
and therefore
\EQ{\label{M eqn}
D_{\Lbar}((D_L+\frac{1}{r})\phi)=\sD^B\sD_B\phi-i\rho(F)\phi+\frac{1}{r}D_L\phi+\Lbar(\frac{1}{r})\phi=:M.
}
Now we claim that for $r\geq t/2,~t\geq1$ we have the bound
\EQ{\label{M-bound}
|M|\lesssim \taup^{-3}\taum^{-1/2}.
}
First we show how the claim proves the lemma. Assuming the {claim, from (\ref{M eqn})
\Aligns{
\nabla_{\Lbar}\left|(D_L+\frac{1}{r})\phi\right|\leq\left|D_{\Lbar}((D_L+\frac{1}{r})\phi)\right|\leq M.
}}
{Now given any point $P$ in our region of consideration, we integrate this equation along a straight line in $\{\ubar=\mathrm{constant}\}$ connecting $P$ to a point on the initial hyper surface $\Sigma_0.$ Since $\taup\sim\taum\sim r$ on $\Sigma_0,$ $(D_L+\frac{1}{r})\phi$ satisfies the desired decay on $\Sigma_0$ and therefore we get the desired bound on $(D_L+\frac{1}{r})\phi$ using the fundamental theorem of calculus.}  It remains to prove (\ref{M-bound}).  The following bounds follow directly from the assumptions of $F:$
\Aligns{
&\left|\rho(F)\phi\right|\lesssim \taup^{-3}\taum^{-1/2},\\
&\left|\Lbar(\frac{1}{r})\phi\right|\lesssim\taup^{-3}\taum^{-1/2}.
}
{Also from the definition of $Q_2(\phi)$ and in view of Lemmas \ref{Klainerman-Sobolev Lemma} and {\ref{KS commutator estimates 1}} $\taup^2\taum^{1/2}|D_L\phi|\lesssim1,$ and therefore}
\Aligns{
\left|\frac{1}{r}D_L\phi\right|\lesssim{\taup^{-3}\taup^{-1/2}.}
}
Finally for $\sD^B\sD_B\phi$ we observe that for appropriate constants $c^{ij}_B$ we can write $e_B=\frac{c^{ij}_B}{r}\Omega_{ij}$ and therefore
\Aligns{
|\sD_A\sD_B\phi|=\frac{c^{ij}_B}{r}|\sD_AD_{\Omega_{ij}}\phi|=\frac{c^{ij}_B}{r}|\sD_A\LL_{\Omega_{ij}}\phi|\lesssim\taup^{-3}\taum^{-1/2}Q_3(\phi).
}
Here to get the last inequality we have again used Lemmas \ref{Klainerman-Sobolev Lemma} and {\ref{KS commutator estimates 1}.}
\end{proof}
{\begin{remark}\label{L-char decay remark}
The proof of this lemma shows that {its} conclusion remains true if $D^\mu D_\mu\phi=G$ with $|G|\lesssim \taup^{-3}\taum^{-1/2}+\taup^{-5/2}\taum^{-3/2}.$ We claim that this estimate holds for $G=D^\mu D_\mu\LL^k\phi$ if we assume the following bootstrap decay rates for the components of $F$ (and its derivatives)
\Aligns{
|\alpha|\lesssim\taup^{-5/2},\quad |\rho|\lesssim \taup^{-2},\quad |\sigma|\lesssim\taup^{-2}\taum^{-1/2},\quad |\alphabar|\lesssim\taup\taum^{-3/2}.
}
To see this we refer the reader to Lemma \ref{kth commutator} above where the term $G$ is computed. The structure of this term for different choices of $\Gamma$ is worked out in equations (\ref{outside SP error S}-\ref{higher derivative extra term}) below. One can now use these equations and the bootstrap assumptions above to see that $G$ has the desired decay. 
\end{remark}}
\section{Estimates in $\{u\leq-1\}$}\label{Outside V_T}
\subsection{{Compactly supported scalar fields}}
In this first subsection we consider the case where $\phi$ is compactly supported. The general case is treated in the next {subsection. Recall} that $V_{T}$ is the spacetime domain enclosed by the outgoing cone $C_{-1}$ and the spacelike hypersurfaces $\Sigma_{0}$ and $\Sigma_T.$ In this section we use the phrase ``outside of $V_T$" to refer to the region $\{u\leq-1,~t\leq T\}$ (see Figure 1). Since $\phi(0,\cdot)$ is supported in {$\{r\leq 3/4\},$} by the finite speed propagation of the equation
\begin{align*}
D^{\alpha}D_{\alpha}\phi=0
\end{align*}
the scalar field $\phi$ vanishes outside $V_{T}.$
\begin{figure}[htb]
\centering
\begin{tikzpicture}[xscale=2, yscale=2]
\path[shade] (0,0) -- (1,0)--(1.65,0.65)--(0,0.65)--cycle;
\path[pattern= north west lines] (1.2,0.2)--(2.98,0.2)--(2.98,0)--(1,0)--cycle;
\path[pattern= north west lines] (1.65,0.65)--(2.98,0.65)--(2.98,0.39)--(1.39,0.39)--cycle;
\path[pattern= north west lines] (1.35,0.35)--(1.75,0)--(1,0)--cycle;
\path[pattern= north west lines] (2.6,0.65)--(2.98,0.65)--(2.98,0)--(2.6,0)--cycle;
\draw [thick] (0,1.25) -- (0,0) -- (3,0);
\node [right] at (3,0) {$\Sigma_0$};
\node [above] at (0,1.25) {$r=0$};
\draw [thick] (1,0) --(2,1);
\node [above right] at (2,1) {$C_{-1}$};
\draw [thick] (0,.65) --(3,.65);
\node [right] at (3,0.65) {$\Sigma_T$};
\draw [thick, decorate,decoration={brace,amplitude=10pt,mirror},xshift=0.4pt,yshift=-0.4pt](0,0) -- (0.98,0) node[black,midway,yshift=-0.6cm] {\footnotesize $\Sigma_0(T)$};
\draw [thick, decorate,decoration={brace,amplitude=10pt,mirror},xshift=0.4pt,yshift=-0.4pt](1.02,0) -- (2.98,0) node[black,midway,yshift=-0.6cm] {\footnotesize $\Sigma_0^O$};
\node at (0.6,0.295) {$V_T$};
\node at (2,0.295) {``Outside of $V_T$"};
\end{tikzpicture}
\caption{}
\label{fig: V_T}
\end{figure}
One of our goals in this section is to prove peeling estimates for $F$ and $\phi$ outside of $V_T.$ Moreover, we will see in the next section that in order to complete the energy estimates inside $V_{T}$, one needs to be given the data on $C_{-1}$. More precisely, the flux
\begin{align}\label{data on Cu}
\int_{C_{-1}}\taup^{2}|\alpha|^{2}+\taum^{2}(\rho^{2}+\sigma^{2})
\end{align}
needs to be bounded in terms of the initial data on $\Sigma_{0}$. Showing why this is true is the {second} goal of this section. Since the electric charge
\begin{align*}
e(t):=\lim_{r\rightarrow\infty}\int_{S_{t,r}}\frac{x^{i}}{r}E_{i}dS_{r},\quad E_{i}:=F_{0i}
\end{align*}
is non-zero, we cannot assume that the initial energy for {electric field $E_{i}$ 
\begin{align*}
\int_{\Sigma_{0}}(1+r^{2})|E|^{2}dx
\end{align*}
is finite.} This means that although outside $V_{T}$ equation \eqref{MKG} becomes the following homogeneous linear Maxwell equations
\begin{align}\label{LinearMaxwell}
\nabla_{\mu}F^{\mu\nu}=0,\quad \nabla_{\mu}({\star F})^{\mu\nu}=0,
\end{align}
one cannot use the multiplier $\overline{K}_{0}$ to get an estimate like \eqref{Morawetz-F1} and use it to derive an estimate for the flux on $C_{-1}$. To recover finiteness, instead of $F$ we will work with $\mathcal{L}_{\Omega}F$ which is charge free, where $\Omega$ is any $\Omega_{ij}.$ To see that $\LL_\Omega F$ is chargeless note that the charge associated to $\mathcal{L}_{\Omega}F$ is
\begin{align*}
\lim_{r\rightarrow\infty}\int_{S_{t,r}}\frac{x^{i}}{r}\big(\mathcal{L}_{\Omega}F\big)_{0i}dS_{r}&=\lim_{r\rightarrow\infty}\int_{S_{t,r}}\big(\mathcal{L}_{\Omega}F\big)_{0r}dS_{r}=\lim_{r\rightarrow\infty}\int_{S_{t,r}}\Omega\big(F_{0r}\big)dS_{r}\\
&=-\lim_{r\rightarrow\infty}\int_{S_{t,r}}\slashed{\textrm{div}}\Omega\cdot F_{0r}dS_{r}.
\end{align*}
Here $\slashed{\textrm{div}}$ represents the divergence operator on $S_{t,r}$. By the definition of $\Omega_{ij}:=x^{i}\partial_{j}-x^{j}\partial_{i}$, we have:
\begin{align*}
\textrm{div}_{\mathbb{R}^{3}}\Omega=0.
\end{align*}
On the other hand, since $\Omega$ is tangential to $S_{t,r}$, we have
\begin{align*}
\slashed{\textrm{div}}\Omega=\textrm{div}_{\mathbb{R}^{3}}\Omega=0
\end{align*}
which implies that $\mathcal{L}_{\Omega}F$ is charge free. Moreover by the conformal invariance of Maxwell's equations (see \cite{CK1}), $\mathcal{L}_{\Omega}F$ also satisfies the linear homogeneous Maxwell equations
\begin{align}\label{LinearMaxwell-chargefree}
\nabla^{\mu}\big(\mathcal{L}_{\Omega}F\big)_{\mu\nu}=0,\quad \nabla^{\mu}{\left(\star\left(\mathcal{L}_{\Omega}F\right)\right)}_{\mu\nu}=0.
\end{align}
It follows that $\nabla^\mu (Q(\LL_\Omega F)_{\mu\nu}\Kbar^\nu)=0,$ and applying the divergence theorem to this identity outside of $V_T$ (see Figure 1) yields the following energy estimate
\begin{align}\label{energyestimateschargefree}
&\int_{\Sigma_{t}^{O}}\big(\taup^{2}|\alpha(\mathcal{L}_{\Omega}F)|^{2}+\taum^{2}|\alphabar(\mathcal{L}_{\Omega}F)|^{2}+(\taup^{2}+\taum^{2})(\rho(\mathcal{L}_{\Omega}F)^{2}+\sigma(\mathcal{L}_{\Omega}F)^{2})\big)\\\notag
&+\int_{C_{-1}\bigcap\{0\leq t'\leq t\}}\taup^{2}|\alpha(\mathcal{L}_{\Omega}F)|^{2}+\taum^{2}\big(\rho(\mathcal{L}_{\Omega}F)^{2}+\sigma(\mathcal{L}_{\Omega}F)^{2}\big) \\\notag
&\leq C\int_{\Sigma_{0}^{O}}(1+r^{2})\big(|\alpha(\mathcal{L}_{\Omega}F)|^{2}+|\alphabar(\mathcal{L}_{\Omega}F)|^{2}+\rho(\mathcal{L}_{\Omega}F)^{2}+\sigma(\mathcal{L}_{\Omega}F)^{2}\big).
\end{align}
Here we are assuming that
\begin{align*}
\int_{\Sigma_{0}^{O}}(1+r^{2})\big(|E(\LL_{\Omega}F)|^{2}+|H(\LL_{\Omega}F)|^{2}\big)dx
\end{align*}
is finite, which is consistent with the chargelessness of $\LL_{\Omega}F.$
{
\begin{lemma}\label{flux estimate}
Under the assumptions of the Theorem \ref{main theorem}, (i)
\Aligns{
\sum_{j\leq k-1}Q_{out}(\LL^jF)(-1,T)+Q_{out}(\LL_\Omega\LL^{k-1}F)(-1,T)\leq C\epsilon.
}
\end{lemma}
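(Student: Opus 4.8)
The strategy is to use that, in case (i) of Theorem \ref{main theorem}, the scalar field vanishes identically outside $V_T$, so that the Maxwell--Klein--Gordon system degenerates there to the linear homogeneous Maxwell equations \eqref{LinearMaxwell} not only for $F$ but for every iterated Lie derivative $\LL^j_\Gamma F$ with $\Gamma\in\mathbb{L}$: each such $\Gamma$ is conformal Killing with constant conformal factor, so repeated use of the second identity in Lemma \ref{1st commutator} gives $\nabla^\mu(\LL^j_\Gamma F)_{\mu\nu}=\LL^j_\Gamma J_\nu(\phi)$, whose right side vanishes on the open set $\{u<-1\}$ where $\phi$, hence $J(\phi)$ and all their derivatives, vanish; by continuity the equations hold up to and including $C_{-1}$. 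Moreover the computation preceding the lemma showing that $\LL_\Omega F$ is chargeless applies verbatim to $\LL_\Omega\LL^jF$. Hence, for each $j\le k-1$ I would run the divergence theorem for $\nabla^\mu\big(Q(\LL_\Omega\LL^jF)_{\mu\nu}\Kbar^\nu\big)=0$ in the exterior of $V_T$, exactly as in the derivation of \eqref{energyestimateschargefree}, to get
\[
Q_{out}(\LL_\Omega\LL^jF)(-1,T)^2\ \lesssim\ \int_{\Sigma^{O}_{0}}(1+r^2)\big(|\alpha|^2+|\alphabar|^2+|\rho|^2+|\sigma|^2\big)(\LL_\Omega\LL^jF)\ \lesssim\ \epsilon^2,
\]
the last bound using \eqref{initial estimates}: $\LL_\Omega\LL^jF$ is a derivative of order $j+1\le k$ whose outermost factor is a rotation, so its $r^2$--weighted $\rho$ norm is covered by the third line of \eqref{initial estimates} and its other components by the second. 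The case $j=k-1$ is the second term in the statement, so it remains to control $Q_{out}(\LL^jF)(-1,T)$ for $j\le k-1$, where $\LL^jF$ may itself carry charge.

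For this I would split
\[
Q_{out}(\LL^jF)(-1,T)^2=\int_{C_{-1}(T)}\taup^{2}|\alpha(\LL^jF)|^{2}+\int_{C_{-1}(T)}\taum^{2}\big(|\rho(\LL^jF)|^{2}+|\sigma(\LL^jF)|^{2}\big)
\]
and treat the two pieces with different multipliers. Since $\taum^{2}=2$ on $C_{-1}$, the second integral is comparable to $\int_{C_{-1}(T)}\big(|\rho(\LL^jF)|^{2}+|\sigma(\LL^jF)|^{2}\big)$, which is a lower bound for the flux through the outgoing cone $C_{-1}$ of $T(\LL^jF)$ contracted with the Killing field $\partial_t$, namely the nonnegative quantity $|\alpha|^{2}+|\rho|^{2}+|\sigma|^{2}$; the plain $\partial_t$ energy identity in the exterior region closes here because the \emph{unweighted} initial energy $\int_{\Sigma^{O}_{0}}|\LL^jF|^{2}\lesssim\epsilon^{2}$ is finite despite the charge (only the $r^{2}$--weighted $\rho$ energy fails to be finite). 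So $\int_{C_{-1}(T)}\taum^{2}(|\rho(\LL^jF)|^{2}+|\sigma(\LL^jF)|^{2})\lesssim\epsilon^{2}$. For the first piece the weight $\taup^{2}\sim r^{2}$ is essential, so instead I would invoke the Poincar\'e inequality of Lemma \ref{Poincare lemma}: on each sphere $S_{t,r}\subset C_{-1}(T)$ one has $\int_{S_{t,r}}|\alpha(\LL^jF)|^{2}\le\sum_{\Omega_{ij}}\int_{S_{t,r}}|\mathcal L_{\Omega_{ij}}\alpha(\LL^jF)|^{2}$, and since $[\Omega_{ij},L]=[\Omega_{ij},\Lbar]=0$ the quantity $\mathcal L_{\Omega_{ij}}\alpha(\LL^jF)$ coincides with $\alpha(\LL_{\Omega_{ij}}\LL^jF)$ up to terms of the same or lower order in $\LL^jF$ (with the analogous statements for $\rho$ and $\sigma$, using $\overline{\sigma}=0$ from Remark \ref{Poincare remark}). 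Multiplying by $\taup^{2}$, which is constant on each sphere, and integrating over $C_{-1}(T)$ gives $\int_{C_{-1}(T)}\taup^{2}|\alpha(\LL^jF)|^{2}\le\sum_{\Omega_{ij}}Q_{out}(\LL_{\Omega_{ij}}\LL^jF)(-1,T)^{2}\lesssim\epsilon^{2}$ by the first paragraph. Summing over $j\le k-1$ finishes the proof.

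I do not expect a deep obstacle; this is precisely where the device of replacing a charged field by the chargeless $\LL_\Omega F$ is meant to pay off. The points requiring care are the bookkeeping of the derivative count against the three lines of \eqref{initial estimates} (in particular the rotation must be the last vector field commuted in, so that the $r^{2}$--weighted data for $\rho(\LL_\Omega\LL^jF)$ is available), verifying that neither divergence theorem picks up a contribution from spatial infinity (which is exactly where finiteness of the relevant weighted, resp.\ unweighted, initial energies is used), and making the commutation between $\mathcal L_{\Omega_{ij}}$ and the null projections precise. An alternative to the $\partial_t$ multiplier, closer to the remarks in the introduction, would be to split $\rho(\LL^jF)=\overline{\rho}(\LL^jF)+(\rho-\overline{\rho})(\LL^jF)$, bounding the oscillatory part by Poincar\'e as above and the spherical average by averaging Maxwell's equations over $S_{t,r}$, which shows $r^{2}\overline{\rho}(\LL^jF)$ is constant along the null generators of $C_{-1}$ and hence $\lesssim\epsilon\,r^{-2}$ there by a trace estimate for the data on $\Sigma_0$.
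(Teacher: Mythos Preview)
Your proof is correct, and for the chargeless fields $\LL_\Omega\LL^jF$ and for the $\taup^2|\alpha|^2$ part it agrees with the paper. The genuine difference is how you handle $\int_{C_{-1}}\taum^2|\rho(\LL^jF)|^2$. You exploit that $\taum\sim1$ on $C_{-1}$ and use the unweighted $\partial_t$ energy identity in the exterior, which closes because \eqref{initial estimates} gives $\int_{\Sigma_0}|\rho(\LL^jF)|^2\lesssim\epsilon^2$ (only the $r^2$-weighted $\rho$ energy is obstructed by the charge). The paper instead splits $\rho=(\rho-\overline{\rho})+\overline{\rho}$, controls $\rho-\overline{\rho}$ by Poincar\'e together with \eqref{energyestimateschargefree}, and controls $\overline{\rho}$ by averaging the null Maxwell equation to show $L\big(\int_{S_{t,r}}\rho\big)=0$, hence $|\overline{\rho}|\lesssim\epsilon r^{-2}$ all along each outgoing cone. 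Your route is more direct for this lemma taken in isolation; the paper's route does extra work here but simultaneously establishes the pointwise bound $|\overline{\rho}|\lesssim\epsilon r^{-2}$ throughout $\{u\le-1\}$, which is exactly what is needed for the peeling estimate $|\rho|\lesssim\epsilon r^{-2}$ in Theorem~\ref{main theorem} and is reused in the non-compact case. You recognize this alternative at the end of your proposal, so you are not missing anything---just be aware that if you adopt the $\partial_t$ shortcut you will still need the averaged-equation argument later for the pointwise $\rho$ decay.
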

}
\begin{proof}
Equation (\ref{energyestimateschargefree}), Lemma \ref{Poincare lemma} and Remark \ref{Poincare remark} give the following estimate on $C_{-1}$
\begin{align*}
&\int_{C_{-1}}\big(\taup^{2}|\alpha|^{2}+\taum^{2}(|\sigma|^{2}+|\rho-\overline{\rho}|^{2})\big)\\\notag
&\quad\quad \leq C\int_{\Sigma_{0}^{O}}(1+r^{2})\big(|\alpha(\mathcal{L}_{\Omega}F)|^{2}+|\alphabar(\mathcal{L}_{\Omega}F)|^{2}+\rho(\mathcal{L}_{\Omega}F)^{2}+\sigma(\mathcal{L}_{\Omega}F)^{2}\big).
\end{align*}
We still need to get an estimate for $\int_{C_{-1}}\taum^{2}\rho^{2}$. {Now}
\begin{align*}
\int_{C_{-1}}\taum^{2}|\rho|^{2}\leq C\int_{C_{-1}}\taum^{2}|\rho-\overline{\rho}|^{2}+C\int_{C_{-1}}\taum^{2}|\overline{\rho}|^{2}.
\end{align*}
{From the previous estimate and (\ref{energyestimateschargefree})} we already have the estimate for the first term on the right hand side, so we only consider (note that $\taum\sim1$ on $C_{-1}$)
\begin{align*}
\int_{C_{-1}}\taum^{2}|\overline{\rho}|^{2}\sim\int_{1}^{\infty}\int_{S_{t,r}}|\overline{\rho}|^{2}dS_{r}d\ubar.
\end{align*}
We first show that the integral $\int_{S_{t,r}}\rho dS_{r}$ is bounded by a constant independent of $t$ and $r$. Writing equations \eqref{LinearMaxwell} in the null frame we see that
\begin{align*}
-\slashed{\textrm{div}}\alpha-L\rho-2r^{-1}\rho=0.
\end{align*}
Integrating this equation on $S_{t,r}$ gives
\begin{align*}
\int_{\mathbb{S}^{2}}r^{2}\big(L\rho+2r^{-1}\rho\big)d\mu_{\mathbb{S}^{2}}=\int_{S_{t,r}}\big(L\rho+2r^{-1}\rho\big)dS_{r}=0,
\end{align*}
which means
\begin{align*}
L\Big(\int_{S_{t,r}}\rho dS_{r}\Big)=L\Big(\int_{\mathbb{S}^{2}}r^{2}\rho(t,\rho,\cdot)d\mu_{\mathbb{S}^{2}}\Big)=\int_{\mathbb{S}^{2}}L\big(r^{2}\rho(t,\rho,\cdot)\big)d\mu_{\mathbb{S}^{2}}=0.
\end{align*}
This implies that the integral $\int_{S_{t,r}}\rho dS_{r}$ is preserved along each outgoing cone $C_{u}$ 
\begin{align*}
\int_{S_{t,r}}\rho dS_{r}=\int_{S_{0,r-t}}\rho dS_{r-t}.
\end{align*}
Therefore we only need to show that on the initial surface $\Sigma_{0}$, the integral $\int_{S_{0,r}}\rho dS_{r}$ is bounded by a constant independent of $r$ which follows from
\begin{align*}
\left|\int_{S_{0,r}}\rho\right|=\left|\int_{\Sigma_0\cap\{|x|\leq r\}}\mathrm{div}E\right|\leq\left(\int_{\Sigma_0}|\phi|^2\right)^{1/2}\left(\int_{\Sigma_0}|D\phi|^2\right)^{1/2}\leq \epsilon.
\end{align*}
We have proved
\begin{align*}
|\overline{\rho}|\leq C\Big|\frac{1}{r^{2}}\int_{S_{t,r}}\rho dS_{r}\Big|\leq \epsilon r^{-2},
\end{align*}
which in turn implies
\begin{align*}
\int_{C_{-1}}\taum^{2}|\overline{\rho}|^{2}=\int_{1}^{\infty}\int_{S_{t,r}}|\overline{\rho}|^{2}dS_{r}d\ubar\leq \epsilon\int_{1}^{\infty}\frac{1}{r^{2}}d\ubar\leq \epsilon\int_{1}^{\infty}\frac{1}{\ubar^{2}}d\ubar< \epsilon.
\end{align*}
This completes the proof of the boundedness of the flux on $C_{-1}$ by a constant which can be made small by taking the initial data to be small. To bound the flux for $\LL^k F$ it suffices to note that by the conformal equivariance of Maxwell's equations $\LL^k F$ also solves
\begin{align*}
\nabla^{\mu}\big(\mathcal{L}^{k}F\big)_{\mu\nu}=0,\quad \nabla^{\mu}\big(\star \mathcal{L}^{k}F\big)_{\mu\nu}=0.
\end{align*}
This means that the analogue of (\ref{energyestimateschargefree}) holds for $\LL^k F$ and therefore we can use the same proof as above to estimate the flux for the derivatives of $F.$ {Since the last vector field we commute is a rotational vector field $\Omega,$ the term with the maximum number of derivatives is automatically chargeless so we do not need an extra derivative to estimate the average.}
\end{proof}
\subsection{{Non-compactly supported scalar fields}} {Here we remove the assumption of compact support from the scalar field $\phi.$ The proofs in this subsection are independent of the results of the previous subsection. The main difference between the compactly supported and general cases is that even if we replace $F$ by $\LL_\Omega F$ using the procedure from the previous subsection, since $F$ no longer satisfies the free Maxwell equations, we will encounter $\rho$ (not $\LL_\Omega\rho$) when commuting derivatives with the equation for $\phi.$ To deal with the slow decay of $\rho$ outside $V_T$ we need to assume more decay on the scalar field on the initial time slice. The following Morawetz estimate is a special case of the estimate proved in \cite{LS1}, and is used to show that the extra decay of $\phi$ is propagated in time. We remark that the double integral on the left hand side of the Morawetz estimate is an important ingredient without which we are not able to estimate the space-time integrals in our error analysis.}
{\begin{lemma}\label{fractional Morawetz lemma}
Suppose $\phi$ satisfies the equation
\begin{align}\label{phi inhomogeneous}
D^{\alpha}D_{\alpha}\phi=G.
\end{align}
Define
\begin{align}\label{bootstrap assumptions}
|F|_{L^{\infty}[0,t]}:=\sup_{\{0\leq t'\leq t\}\times \mathbb{R}^{3}\cap V_{T}^{c}}\left(\tau_{+}^{5/2}\tau_{-}^{-1/2}|\alpha|+{\tau_{+}^{7/4}\tau_{-}^{1/4}|\rho|}+\tau_{+}\tau_{-}|\underline{\alpha}|\right),
\end{align}
and let $w_{\gamma}(t,r):=\taum^{2\gamma}$ and $w'_{\gamma}(t,r):=\taum^{2\gamma-1}.$ Then with
\begin{align*}
\mathcal{E}^{\gamma}_{0}(t):& =\int_{\Sigma_{t}\cap V_{T}^{c}}\left(\taup^{2}|\left(D_{L}+\frac{1}{r}\right)\phi|^{2}+\taum^{2}|\left(D_{\Lbar}-\frac{1}{r}\right)\phi|^{2}+\taup^{2}|\slashed{D}\phi|^{2}+(\taup^{2}+\taum^{2})|\frac{\phi}{r}|^{2}\right)\cdot w_{\gamma}\\
& +\sup_{u\leq-1}\int_{C_{u}\cap \{0\leq t'\leq t\}}\left(\taup^{2}|\left(D_{L}+\frac{1}{r}\right)\phi|^{2}+\taum^{2}(|\slashed{D}\phi|^{2}+|\frac{\phi}{r}|^{2})\right)\cdot w_{\gamma}\\
& +\sup_{\ubar}\int_{\Cbar_{\ubar}\cap\{0\leq t'\leq t\}}\left(\taum^{2}|\left(D_{\Lbar}-\frac{1}{r}\right)\phi|^{2}+\taup^{2}(|\slashed{D}\phi|^{2}+|\frac{\phi}{r}|^{2})\right)\cdot w_{\gamma}\\
& +\iint_{V_{T}^{c}\cap \{0\leq t'\leq t\}}\left(\taup^{2}|\left(D_{L}+\frac{1}{r}\right)\phi|^{2}+\tau_{0}^{3/2}\left(\taum^{2}|\left(D_{\Lbar}-\frac{1}{r}\right)\phi|^{2}+\taup^{2}(|\slashed{D}\phi|^{2}+|\frac{\phi}{r}|^{2})\right)\right)\cdot w'_{\gamma},
\end{align*}
we have
\begin{align}\label{very final energy estimates}
& \mathcal{E}^{\gamma}_{0}(t)\lesssim_{\gamma}\int_{\Sigma_{0}}(1+r^{2})^{1+\gamma}|D\phi|^{2}\\\notag
& +\left(\|\taup\taum^{1/2}Gw_{\gamma}^{1/2}\|^{2}_{L^{2}(V_{T}^{c})}+|F|_{L^{\infty}[0,t]}\mathcal{E}^{\gamma}_{0}(t)\right).
\end{align}
\end{lemma}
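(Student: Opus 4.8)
The plan is to follow the conformal-compactification argument of Lindblad--Sterbenz \cite{LS1} that was already used in the proof of Proposition \ref{LS Morawetz}, but now carrying the weight $\wgamma=\taum^{2\gamma}$ through the multiplier. Concretely, I would work with the conformal metric $\ti g=\Omega^{-2}g$, $\Omega=r$ (or $u\ubar$), for which $\Omega\phi$ satisfies $\ti D^\alpha\ti D_\alpha(\Omega\phi)=\Omega^3 G$ since $\nabla^\alpha\nabla_\alpha(\Omega^{-1})=0$, and $\LL_{K_0}\ti g=0$. The new ingredient is the multiplier vector field: instead of $\Kbar$ alone I would use a modified multiplier of the form $X=\wgamma\Kbar + (\text{lower order correction})$, or equivalently apply the divergence theorem to $\ti Q_{\alpha\beta}[\phi]\Kbar^\beta$ against the measure twisted by $\wgamma$. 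The deformation tensor of this modified multiplier has a good sign: because $\wgammap=\taum^{2\gamma-1}$ and the derivative of $\wgamma$ in the null directions produces a term proportional to $\gamma\,\taum^{2\gamma-1}$ with the correct sign (this is exactly the mechanism that generates the spacetime integral $\iint (\dots)\cdot\wgammap$ on the left-hand side of $\calE_0^\gamma(t)$). So the first step is to compute $^{(X)}\pi$ and identify the good bulk term $\gamma\iint\tau_0^{3/2}(\taum^2|(D_{\Lbar}-\tfrac1r)\phi|^2+\dots)\wgammap$ together with the boundary terms on $\Sigma_t$, $C_u$, $\Cbar_{\ubar}$ that assemble into $\calE_0^\gamma(t)$.

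The second step is to convert the spacetime divergence identity into the stated inequality. Integrating $\ti\nabla^\alpha(\ti Q_{\alpha\beta}[\phi]\Kbar^\beta)$ over $V_T^c\cap\{0\le t'\le t\}$ produces: on the right, the initial-data term on $\Sigma_0$, which becomes $\int_{\Sigma_0}(1+r^2)^{1+\gamma}|D\phi|^2$ once the conformal weights $\Omega^{-3}$ and the factor $\wgamma$ on $\Sigma_0$ (where $\taum\sim\taup\sim r$) are unwound; and the error integral $\iint_{V_T^c}\Omega^4\big(\Re(G\overline{\tfrac1\Omega D_{K_0}(\Omega\phi)})+K_0^\beta F_{\beta\gamma}\Im(\phi\overline{\tfrac1\Omega D^\gamma(\Omega\phi)})\big)\wgamma$. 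As in the unweighted case one uses $|\tfrac1\Omega D_{K_0}(\Omega\phi)|\lesssim \taup^2|(D_L+\tfrac1r)\phi|+\taum^2|(D_{\Lbar}-\tfrac1r)\phi|$, so the $G$-term is controlled by Cauchy--Schwarz: $\|\taup\taum^{1/2}G\wgamma^{1/2}\|_{L^2(V_T^c)}^2$ times (a small multiple of) $\calE_0^\gamma(t)$, the latter absorbed into the left-hand side. Here one must be slightly careful to pair the $\taup\taum^{1/2}\wgamma^{1/2}$ weight on $G$ against the $\taup^2|(D_L+\tfrac1r)\phi|^2\wgamma$ and $\taum^2|(D_{\Lbar}-\tfrac1r)\phi|^2\wgamma$ pieces of the flux, which is exactly how the weights balance since $\taup\taum^{1/2}\le \taup^2$ and $\le \taum^2\cdot\taup^{1/2}/\taum^{1/2}$ on the relevant regions.

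The third and genuinely delicate step is the Maxwell error term $\iint K_0^\beta F_{\beta\gamma}\Im(\phi\,\overline{\tfrac1\Omega D^\gamma(\Omega\phi)})\wgamma$. Decomposing $F$ into its null components, the contributions are bounded by $\big(\taup^{5/2}\taum^{-1/2}|\alpha| + \taup^{7/4}\taum^{1/4}|\rho| + \taup\taum|\alphabar|\big)$ — which is precisely why the norm $|F|_{L^\infty[0,t]}$ is defined with those weights — multiplied by a pointwise bound for $K_0$ and by the quadratic-in-$\phi$ factor $|\phi|\cdot|\tfrac1\Omega D^\gamma(\Omega\phi)|$. After inserting $\wgamma$ and using $|K_0|\lesssim \taup^2$ in the $L$-direction and $|K_0|\lesssim\taum^2$ in the $\Lbar$-direction, each term is estimated by $|F|_{L^\infty[0,t]}$ times a product of two factors each of which, by Cauchy--Schwarz on the spheres and in $u,\ubar$, is controlled by a piece of $\calE_0^\gamma(t)$ (the $|\phi/r|^2\wgamma$ flux paired against a derivative flux); the extra powers $\taup^{7/4}\taum^{1/4}$ on $\rho$ as opposed to the naive $\taup^2$ are exactly what is needed so that, after distributing weights, the spacetime bulk term $\iint\tau_0^{3/2}(\dots)\wgammap$ rather than a pure flux is what controls the $\rho$-contribution — this is the place where the double integral on the left is indispensable. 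I expect this Maxwell-error estimate, and in particular the precise matching of the $\rho$-weight $\taup^{7/4}\taum^{1/4}$ with the $\tau_0^{3/2}\wgammap$ bulk weight, to be the main obstacle; everything else is a weighted repetition of the argument already given for Proposition \ref{LS Morawetz}. Finally, the $(\taup^2+\taum^2)|\phi/r|^2\wgamma$ terms in $\calE_0^\gamma(t)$ are recovered exactly as in the corollary following Proposition \ref{LS Morawetz}, by combining the $\Omega=r$ and $\Omega=u\ubar$ estimates via the algebraic identities $-u\phi/r=(\ubar D_L\phi+2\phi)-(\ubar D_L\phi+\tfrac\ubar r\phi)$ and its $\Lbar$-analogue, now weighted by $\wgamma$.
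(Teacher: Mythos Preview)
Your overall architecture matches the paper's proof---conformal metric $\ti g=\Omega^{-2}g$, weighted multiplier $K_0$, divergence theorem, then combine $\Omega=r$ and $\Omega=u\ubar$ via the algebraic identities to recover the $|\phi/r|^2$ fluxes---but there is a genuine gap in Step~1 that would make the argument fail as written.

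You propose to twist the momentum density by $w_\gamma=\taum^{2\gamma}$ and claim that ``the derivative of $w_\gamma$ in the null directions produces a term proportional to $\gamma\taum^{2\gamma-1}$ with the correct sign,'' which you say generates the $\tau_0^{3/2}$ bulk term. But $w_\gamma$ depends only on $u$, so $L(w_\gamma)=0$. The $\Lbar$-derivative indeed gives $-\Lbar(w_\gamma)\sim w'_\gamma$, which produces the bulk term $\iint\big(\ubar^2|\tfrac{1}{\Omega}D_L(\Omega\phi)|^2+u^2|\sD\phi|^2\big)w'_\gamma$. However the $\tau_0^{3/2}$-weighted bulk term---the one controlling $|(D_{\Lbar}-\tfrac1r)\phi|^2$ and $\taup^2|\sD\phi|^2$, and the one you yourself flag as indispensable for the $\rho$-error---comes from the $L$-derivative of the weight, which vanishes for your choice. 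The paper's key device (from \cite{LS1}) is to replace $w_\gamma$ by the equivalent weight
\[
\widetilde w_\gamma=1+(2-u)^{2\gamma}+(1+\ubar)^{-1/2}(2-u)^{2\gamma+1/2},
\]
whose $L$-derivative satisfies $-L(\widetilde w_\gamma)=\tfrac12(1+\ubar)^{-3/2}(2-u)^{2\gamma+1/2}\sim C_\gamma\tau_0^{3/2}w'_\gamma$. This is precisely what produces the $\tau_0^{3/2}$ bulk; without it your Cauchy--Schwarz arguments in Steps~2 and~3 have no bulk term to pair against and the $G$- and $\rho$-errors are not controllable. Your hedge ``$+(\text{lower order correction})$'' does not cover this, since the needed correction is of the \emph{same} order as $w_\gamma$ itself (indeed $\widetilde w_\gamma\sim w_\gamma$). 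A minor additional point: the paper closes with a Hardy/Poincar\'e inequality on $\Sigma_0$ to absorb the $(1+r^2)^{1+\gamma}|\phi/r|^2$ initial-data term into $(1+r^2)^{1+\gamma}|D\phi|^2$, which you should also include.
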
}
\begin{proof}[Proof. (based on \cite{LS1})]
Let us introduce the following weight function defined in $\{u\leq -3/4\}$:
\begin{align}\label{weight functions}
\widetilde{w}_{\gamma}(t,r):=1+(2-u)^{2\gamma}+(1+\ubar)^{-1/2}(2-u)^{2\gamma+1/2}
\end{align}
where $0<\gamma<\frac{1}{2}$. We also write $\tau_0=\taum/\taup.$ One can check directly that $\widetilde{w}_{\gamma}\sim w_{\gamma}$.  As we did in the domain $V_{T}$, we still work with the conformal metrics $\widetilde{g}_{\alpha\beta}=\dfrac{1}{\Omega^{2}}g_{\alpha\beta}$ in the domain $V_{T}^{c}=\{u\leq -1\}$. However, we shall make a slight modification on the multiplier such that the momentum density is defined as
\begin{align}\label{modified momentum}
\widetilde{P}_{\alpha}^{(\gamma)}[\phi]=\widetilde{Q}_{\alpha\beta}[\phi]\left(K_{0}\right)^{\beta}\cdot \widetilde{w}_{\gamma}.
\end{align} 
The divergence of this is calculated as follows:
\begin{align}\label{divergence of momentum}
\widetilde{\nabla}^{\alpha}\widetilde{P}_{\alpha}^{(\gamma)}[\phi]=& \Omega^{4}\Big(\Re\big({G}\cdot\overline{\frac{1}{\Omega}D_{K_{0}}(\Omega\phi)}\big)+K_{0}^{\beta}F_{\beta\gamma}\Im\big(\phi\overline{\frac{1}{\Omega}D^{\gamma}(\Omega\phi)}\big)\Big)\cdot\widetilde{w}_{\gamma}\\\notag
& -\frac{\Omega^{2}}{2}\widetilde{Q}_{\Lbar\beta}[\phi]K_{0}^{\beta}\cdot L(\widetilde{w}_{\gamma})-\frac{\Omega^{2}}{2}\widetilde{Q}_{L\beta}[\phi]K_{0}^{\beta}\cdot\Lbar(\widetilde{w}_{\gamma}).
\end{align}
Here we have already used the fact that $K_{0}$ is Killing with respect to the conformal metric $\widetilde{g}$. In order to deal with the two terms on the second line, we use the following expressions:
\begin{align}\label{expresson for extra term 1}
-L(\widetilde{w}_{\gamma}) =-\partial_{\ubar}(\widetilde{w}_{\gamma})=\frac{1}{2}(1+\ubar)^{-3/2}\cdot\left(2-u\right)^{2\gamma+1/2} \sim C_{\gamma}\tau_{0}^{3/2}w'_{\gamma},
\end{align}
\begin{align}\label{expression for extra term 2}
-\Lbar(\widetilde{w}_{\gamma}) & =-\partial_{u}(\widetilde{w}_{\gamma})\\\notag & =2\gamma(2-u)^{2\gamma-1}+(2\gamma+1/2)(1+\ubar)^{-1/2}(2-u)^{2\gamma-1/2}\\\notag
& \sim C_{\gamma}w'_{\gamma}.
\end{align}
Here the notation $f\sim g$ is used to mean that $f$ and $g$ have the same sign and have comparable sizes. With this same notation, it follows from the above expressions that
\begin{align}\label{positivity of double integration}
& -\frac{1}{2}\Omega^{-2}\widetilde{Q}_{\Lbar\beta}[\phi]K_{0}^{\beta}\cdot L\left(\widetilde{w}_{\gamma}\right)\sim C_{\gamma}\tau_{0}^{3/2}\left(u^{2}|\frac{1}{\Omega}D_{\Lbar}(\Omega\phi)|^{2}+\ubar^{2}|\slashed{D}\phi|^{2}\right)\cdot w'_{\gamma},\\
& -\frac{1}{2}\Omega^{-2}\widetilde{Q}_{L\beta}[\phi]K_{0}^{\beta}\cdot \Lbar\left(\widetilde{w}_{\gamma}\right)\sim C_{\gamma}\left(\ubar^{2}|\frac{1}{\Omega}D_{L}(\Omega\phi)|^{2}+u^{2}|\slashed{D}\phi|^{2}\right)\cdot w'_{\gamma}.
\end{align}
These two terms give the positive double integration on the left hand side of the energy estimates. More specifically, by the divergence theorem
\begin{align}\label{modified energy estimate-K0}
& \int_{\Sigma_{t}\cap V_{T}^{c}}\left(\ubar^{2}|\frac{1}{\Omega}D_{L}(\Omega\phi)|^{2}+u^{2}|\frac{1}{\Omega}(D_{\Lbar}(\Omega\phi)|^{2}+\ubar^{2}|\slashed{D}\phi|^{2}\right)\cdot w_{\gamma}\\\notag
& +\sup_{u\leq-1}\int_{C_{u}\cap \{0\leq t'\leq t\}}\left(\ubar^{2}|\frac{1}{\Omega}D_{L}(\Omega\phi)|^{2}+u^{2}|\slashed{D}\phi|^{2}\right)\cdot w_{\gamma}\\\notag
& +{\sup_{\ubar\geq0}}\int_{\Cbar_{\ubar}\cap\{0\leq t'\leq t\}}\left(u^{2}|\frac{1}{\Omega}D_{\Lbar}(\Omega\phi)|^{2}+\ubar^{2}|\slashed{D}\phi|^{2}\right)\cdot w_{\gamma}\\\notag
& +\iint_{V_{T}^{c}\cap \{0\leq t'\leq t\}}\left(\ubar^{2}|\frac{1}{\Omega}D_{L}(\Omega\phi)|^{2}+\tau_{0}^{3/2}\left(u^{2}|\frac{1}{\Omega}D_{\Lbar}(\Omega\phi)|^{2}+\ubar^{2}|\slashed{D}\phi|^{2}\right)\right)\cdot w'_{\gamma}\\\notag
& \lesssim \int_{\Sigma_{0}\cap V_{T}^{c}}(1+r^{2})^{1+\gamma}|\frac{1}{\Omega}D(\Omega\phi)|^{2}\\\notag
&\quad+\iint_{V_{T}^{c}\cap\{0\leq t'\leq t\}}|G|\cdot \left(\ubar^{2}|\frac{1}{\Omega}D_{L}(\Omega\phi)|+u^{2}|\frac{1}{\Omega}D_{\Lbar}(\Omega\phi)|\right)\cdot w_{\gamma}\\\notag
& \quad+\iint_{V_{T}^{c}\cap \{0\leq t'\leq t\}}\ubar^{3}\left(|\alpha|\cdot|\frac{\phi}{r}|\cdot|\slashed{D}\phi|+|\rho|\cdot|\frac{\phi}{r}|\cdot|\frac{1}{\Omega}D_{L}(\Omega\phi)|\right)\cdot w_{\gamma}\\\notag
& \quad+\iint_{V_{T}^{c}\cap \{0\leq t'\leq t\}}\ubar u^{2}\left(|\alphabar|\cdot|\frac{\phi}{r}|\cdot|\slashed{D}\phi|+|\rho|\cdot|\frac{\phi}{r}|\cdot|\frac{1}{\Omega}D_{\Lbar}(\Omega\phi)|\right)\cdot w_{\gamma}.
\end{align}
Applying \eqref{modified energy estimate-K0} to $\Omega=r$ and $\Omega=\ubar u,$ and using the relations
\begin{align}\label{relations for phi}
& \frac{-u\phi}{r}=(\ubar D_{L}\phi+2\phi)-\big(\ubar D_{L}\phi+\frac{\ubar}{r}\phi\big),\quad \frac{\ubar\phi}{r}=(uD_{\Lbar}\phi+2\phi)-\big(uD_{\Lbar}\phi-\frac{u}{r}\phi\big),\\\notag
& \ubar^{2}\big|\frac{1}{r}D_{L}(r\phi)\big|^{2}=\big|\ubar D_{L}\phi+\frac{\ubar\phi}{r}\big|^{2},\quad u^{2}\big|\frac{1}{r}D_{\Lbar}(r\phi)\big|^{2}=\big|uD_{\Lbar}\phi-\frac{u\phi}{r}\big|^{2},\\\notag
& \ubar^{2}\big|\frac{1}{u\ubar}D_{L}(u\ubar\phi)\big|^{2}=|\ubar D_{L}\phi+2\phi|^{2},\quad u^{2}\big|\frac{1}{u\ubar}D_{\Lbar}(u\ubar\phi)\big|^{2}=|uD_{\Lbar}\phi+2\phi|^{2},
\end{align}
we get
{\begin{align}\label{pre-final energy estimates}
& \mathcal{E}^{\gamma}_{0}(t)\lesssim_{\gamma}\int_{\Sigma_{0}\cap V_{T}^{c}}(1+r^{2})^{1+\gamma}\left(|D\phi|^{2}+|\frac{\phi}{r}|^{2}\right)\\\notag
& +\left({\|\taup\taum^{1/2}G w_\gamma^{1/2}\|_{L^{2}(V_{T}^{c})}}(\mathcal{E}^{\gamma}_{0}(t))^{1/2}+|F|_{L^{\infty}[0,t]}\mathcal{E}^{\gamma}_{0}(t)\right).
\end{align}}
Dividing \eqref{pre-final energy estimates} by $(\mathcal{E}^{\gamma}_{0}(t))^{1/2}$ and squaring we get
\begin{align}\label{final energy estimates}
& \mathcal{E}^{\gamma}_{0}(t)\lesssim_{\gamma}\int_{\Sigma_{0}\cap V_{T}^{c}}(1+r^{2})^{1+\gamma}\left(|D\phi|^{2}+|\frac{\phi}{r}|^{2}\right)\\\notag
& +\left({\|\taup\taum^{1/2}Gw_{\gamma}^{1/2}\|^{2}_{L^{2}(V_{T}^{c})}}+|F|_{L^{\infty}[0,t]}\mathcal{E}^{\gamma}_{0}(t)\right).
\end{align}
The lemma follows from this and the following Poincar\'e inequality proved in \cite{LS1}
\begin{align}\label{poincare on sigma0}
\int_{\Sigma_{0}}(1+r^{2})^{1+\gamma}\left|\frac{\phi}{r}\right|^{2}\lesssim\int_{\Sigma_{0}}(1+r^{2})^{1+\gamma}|D\phi|^{2}.
\end{align}
Indeed, using the inequality $\nabla|\phi|\lesssim|D\phi|$ and a standard density argument, we only need to show that the following inequality holds for any function $\psi\in C^{\infty}_{c}(\mathbb{R}^{3})$:
\begin{align*}
\int_{\mathbb{R}^{3}}(1+r^{2})^{1+\gamma}\left|\frac{\psi}{r}\right|^{2}\lesssim\int_{\mathbb{R}^{3}}(1+r^{2})^{1+\gamma}|\partial\psi|^{2}.
\end{align*}
This can be shown by using the following identity and the Cauchy-Schwarz inequality:
\begin{align*}
\partial_{r}\left((1+r)^{2\gamma+1}r^{2}\psi^{2}\right)
&=(2\gamma+1)(1+r)^{2\gamma}r^{2}\psi^{2}\\
&+2(1+r)^{2\gamma+1}r\psi^{2}
+2(1+r)^{2\gamma+1}r^{2}\psi\partial_{r}\psi.
\end{align*}
\end{proof}
We will also need a version of Lemma \ref{Klainerman-Sobolev Lemma} adapted to $V_{T}^{c}$. Let us consider the following smooth cut off function:
\begin{align*}
\chi(s):=
\begin{cases}
1 & s\leq -1\\
0 & s\geq -3/4.
\end{cases}
\end{align*}
Then we have the following Lemma.
\begin{lemma}\label{Klainerman-Sobolev}
Let $f(t,x)$ be a smooth function defined in $\{u\leq -3/4\}$, then for any $(t,x)\in\{u\leq-1\}$, we have:
\begin{align*}
|f(t,x)|\lesssim\taup^{-1}\taum^{-1/2}w_{\gamma}^{-1/2}\sum_{|\alpha|\leq 2}\|\left(\mathcal{L}_{\Gamma}^{\alpha}f\right)(t,\cdot)\cdot w^{1/2}_{\gamma}\|_{L^{2}(\Sigma_{t}\cap\{u\leq-3/4\})},
\end{align*}
{where the vector fields $\Gamma$ are chosen from $\mathbb{L}.$}
\end{lemma}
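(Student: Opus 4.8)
The plan is to reduce the weighted Klainerman--Sobolev estimate on the exterior region $\{u\leq -1\}$ to the standard global Klainerman--Sobolev estimate of Lemma \ref{Klainerman-Sobolev Lemma} by a cut-off and weight-absorption argument. First I would introduce the cut-off function $\chi$ described just before the statement and set $g(t,x):=\chi(u)\,w_\gamma^{1/2}(t,x)\,f(t,x)$, which is a smooth function defined on all of $\R^{1+3}$ (it vanishes for $u\geq -3/4$, in particular near the light cone and in the interior, and it agrees with $w_\gamma^{1/2}f$ on $\{u\leq -1\}$). Applying Lemma \ref{Klainerman-Sobolev Lemma} to $g$ gives, for $(t,x)\in\{u\leq-1\}$,
\[
\taup\taum^{1/2}\,\big|w_\gamma^{1/2}(t,x)f(t,x)\big|\;=\;\taup\taum^{1/2}|g(t,x)|\;\lesssim\;\sum_{|I|\leq 2}\big\|\Gamma^I g(t,\cdot)\big\|_{L^2_x},
\]
which is already essentially the claimed bound once we divide by $w_\gamma^{1/2}(t,x)$, provided we can control $\sum_{|I|\leq2}\|\Gamma^I g(t,\cdot)\|_{L^2_x}$ by the right-hand side of the Lemma.

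The main step is therefore to show that the vector fields $\Gamma\in\mathbb{L}$ act on the extra factors $\chi(u)$ and $w_\gamma^{1/2}$ in a harmless way, i.e. that
\[
\sum_{|I|\leq2}\big\|\Gamma^I\big(\chi(u)\,w_\gamma^{1/2}f\big)(t,\cdot)\big\|_{L^2_x}\;\lesssim\;\sum_{|I|\leq2}\big\|\big(\LL_\Gamma^I f\big)(t,\cdot)\,w_\gamma^{1/2}\big\|_{L^2(\Sigma_t\cap\{u\leq-3/4\})}.
\]
By the Leibniz rule, $\Gamma^I(\chi w_\gamma^{1/2} f)$ is a sum of terms of the form $(\Gamma^{I_1}\chi)(\Gamma^{I_2}w_\gamma^{1/2})(\Gamma^{I_3}f)$ with $|I_1|+|I_2|+|I_3|\leq 2$. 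The point is that each of the scaling and rotation vector fields $\Gamma\in\{S,\Omega_{\mu\nu}\}$ (and the translations, if one takes the convention that $\mathbb{L}$ is generated by these) acts boundedly, \emph{relative to the weight itself}, on both $\chi(u)$ and $w_\gamma(t,r)=\taum^{2\gamma}$: the rotations $\Omega_{ij}$ annihilate $u$ and $\taum$ entirely, while $S=t\partial_t+r\partial_r$ satisfies $S(u)=u$, $S(\taum^2)=u^2+uu$ is $\sim \taum^2$ on $\{u\leq-3/4\}$ (so $|S^k w_\gamma^{1/2}|\lesssim w_\gamma^{1/2}$), and $|S^k\chi(u)|\lesssim 1$ since $\chi'$ is supported in the compact $u$-interval $[-1,-3/4]$ where $|u|\sim 1$. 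Hence every product term is bounded by $C\,w_\gamma^{1/2}|\Gamma^{I_3}f|$ pointwise on $\{u\leq -3/4\}$, and zero outside; summing over $|I_3|\leq 2$ and replacing ordinary derivatives $\Gamma^{I_3}f$ by modified Lie derivatives $\LL_\Gamma^{I_3}f$ (the difference involves only lower-order terms with the same or better weights, as in the commutator computations of Subsection \ref{Diff Ops}) yields the desired bound. Dividing the pointwise inequality by $w_\gamma^{1/2}(t,x)$ completes the proof.

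I expect the only mildly delicate point to be the bookkeeping that $S$ acting on $w_\gamma^{1/2}=\taum^{\gamma}=(1+u^2)^{\gamma/2}$ reproduces $w_\gamma^{1/2}$ up to a constant \emph{on the support of $\chi$}, where $|u|\geq 3/4$ guarantees $\taum\sim |u|$ and hence $S(\taum^2)=2u\,S(u)+\text{(lower)}=2u^2+\dots\sim\taum^2$; near $u=0$ this would fail, but the cut-off removes that region. A secondary bookkeeping issue is passing from $\Gamma^I$ (iterated plain vector fields) to $\LL^I_\Gamma$ (iterated modified Lie derivatives) on the scalar field $f$, which is standard since on scalar functions $\LL_\Gamma$ differs from $\Gamma$ only by the constant conformal factors $\Omega_\Gamma$. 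Neither of these obstacles is serious; the argument is essentially a weighted localization of the classical estimate.
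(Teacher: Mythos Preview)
Your proposal is correct and follows essentially the same strategy as the paper: define $g=\chi(u)w_\gamma^{1/2}f$, apply the classical Klainerman--Sobolev estimate (Lemma~\ref{Klainerman-Sobolev Lemma}) to $g$, and then verify that the commutator vector fields act harmlessly on the factors $\chi(u)$ and $w_\gamma^{1/2}$. The paper streamlines your case-by-case check by observing once and for all that $\chi$ and $w_\gamma$ depend only on $u$, so only the $\Lbar$-component of $\Gamma$ contributes, and for every $\Gamma\in\mathbb{L}$ this coefficient is bounded by $\taum$; this immediately gives $|\Gamma\chi|\lesssim\taum|\chi'(u)|\lesssim 1$ (by the support of $\chi'$) and $|\Gamma w_\gamma^{1/2}|\lesssim w_\gamma^{1/2}$, covering in particular the boosts $\Omega_{0i}$ which you did not treat explicitly.
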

\begin{proof}
Let us introduce the function $\widetilde{f}(t,x):=\chi(u)f(t,x)w^{1/2}_{\gamma}$. By Lemma \ref{Klainerman-Sobolev Lemma}, for any $(t,x)\in\{u\leq-1\}$:
\begin{align}\label{classical Klainerman-Sobolev}
|\widetilde{f}(t,x)|\lesssim\taup^{-1}\taum^{-1/2}\sum_{|\alpha|\lesssim 2}\|\left(\mathcal{L}^{\alpha}_{\Gamma}\widetilde{f}\right)(t,\cdot)\|_{L^{2}(\Sigma_{t}\cap\{u\leq-3/4\})}.
\end{align}
Since $w_{\gamma}$ and $\chi(u)$ depend only on $u$, only the $\Lbar$ component of their derivative is nonzero. But for any $\Gamma$, the coefficient of $\Lbar$ is always controlled by $\taum$ and
\begin{align*}
|\left(\mathcal{L}_{\Gamma}\chi\right)(u)|\lesssim |\taum\cdot\chi'(u)|,\quad |\mathcal{L}_{\Gamma}w_{\gamma}|\lesssim |u|\taum^{2\gamma-1}\lesssim w_{\gamma}.
\end{align*} 
Since $\chi'(u)$ is supported on $u\in[-1,-3/4]$,
\begin{align*}
|\left(\mathcal{L}_{\Gamma}\chi\right)(u)|\lesssim |\chi'(u)|\lesssim 1,\quad |\mathcal{L}_{\Gamma}w^{1/2}_{\gamma}|\lesssim w^{1/2}_{\gamma}.
\end{align*}
As a result, the right hand side of \eqref{classical Klainerman-Sobolev} is bounded by
\begin{align*}
\taup^{-1}\taum^{-1/2}\sum_{|\alpha|\leq 2}\|\left(\mathcal{L}^{\alpha}_{\Gamma}f\right)(t,\cdot)w^{1/2}_{\gamma}\|_{L^{2}(\Sigma_{t}\cap\{u\leq-3/4\})},
\end{align*}
which implies the lemma.
\end{proof}
Let us define $\mathcal{E}_{k}^{\gamma}(t)$ to be the sum of corresponding energies of $\mathcal{L}_{\Gamma}^{\alpha}\phi$ with $|\alpha|\leq k$. If one can show $\mathcal{E}_{k}^{\gamma}(t)$ is bounded by the initial energies, then by Lemma \ref{Klainerman-Sobolev},
\begin{align}\label{Linfinity phi}
|D_{\Lbar}\phi-\frac{1}{r}\phi|\lesssim\taup^{-1}\taum^{-3/2}w_{\gamma}^{-1/2},\quad |\slashed{D}\phi|\lesssim\taup^{-2}\taum^{-1/2}w^{-1/2}_{\gamma},\quad |\phi|\lesssim\taup^{-1}\taum^{-1/2}w^{-1/2}_{\gamma}.
\end{align}
{Similarly the proof of Lemma \ref{L-char decay} implies the following decay rate for $D_{L}\phi+\dfrac{1}{r}\phi$ outside $V_{T}:$
\begin{align}\label{Linfinity DL phi}
|D_{L}\phi+\frac{1}{r}\phi|\lesssim\taup^{-5/2}w_{\gamma}^{-1/2}.
\end{align}}
The point-wise bounds for the components of $F$ in terms of the energy are obtained similarly.
\begin{lemma} 
Under the assumptions of Theorem \ref{main theorem}, and assuming that $\EE_k^\gamma$ and $Q_k^{*}$ are smaller than a sufficiently small $\epsilon$
\Aligns{
|\alpha|\lesssim\taup^{-5/2}{\epsilon},\quad|\alphabar|\lesssim \taup^{-1}\taum^{-3/2}{\epsilon},\quad|\sigma|\lesssim\taup^{-2}\taum^{-1/2}{\epsilon},\quad|\rho|\lesssim r^{-2}{\epsilon}.
}
Furthermore, inside $V_T$ the last estimate can be replaced by $|\rho|\lesssim \taup^{-2}\taum^{-1/2}{\epsilon}.$
\end{lemma}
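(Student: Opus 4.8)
The plan is to assemble tools that are by now available: the Klainerman--Sobolev inequalities (Lemma~\ref{Klainerman-Sobolev Lemma} and its weighted exterior counterpart Lemma~\ref{Klainerman-Sobolev}); the assumed bounds $Q^*_k\lesssim\epsilon$, $\EE^\gamma_k\lesssim\epsilon$, together with the flux bound on $C_{-1}$ and the exterior energy estimate for the chargeless field $\LL_\Omega F$ (obtained by the divergence theorem exactly as for $\LL_\Omega F$ in the compactly supported case, the current $\LL_\Omega J(\phi)$ now being absorbed via Lemma~\ref{fractional Morawetz lemma}); the transport argument of Lemma~\ref{L-char decay}; and the propagation-of-charge argument of Lemma~\ref{flux estimate}. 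The bounds for $\alphabar$, $\sigma$, and for $\rho$ \emph{inside} $V_T$ are the routine part: $Q_0(G)^2$ controls $\int\taum^2|\alphabar|^2+(\taup^2+\taum^2)(|\rho|^2+|\sigma|^2)$ over a region on which the charge does not obstruct finiteness (bounded in the case of $\Sigma_t(T)$), so commuting up to two vector fields from $\mathbb{L}$ and using \eqref{vf convariant der decompositions}, \eqref{vectorfield commutators} to rewrite $\LL_\Gamma$ applied to a weighted null component in terms of favorably weighted null components of $\LL_\Gamma F$, Lemma~\ref{Klainerman-Sobolev Lemma} gives $|\alphabar|\lesssim\taup^{-1}\taum^{-3/2}\epsilon$ and $|\sigma|,|\rho|\lesssim\taup^{-2}\taum^{-1/2}\epsilon$. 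In $\{u\le-1\}$, where the energy of $F$ itself is infinite, one runs the same argument for $\LL_\Omega F$ and invokes the Poincar\'e inequalities of Lemma~\ref{Poincare lemma}, namely $\int_{S_{t,r}}|\alpha|^2\le\sum_\Omega\int_{S_{t,r}}|\LL_\Omega\alpha|^2$, likewise for $\alphabar$, and $\overline{\sigma}=0$ by Remark~\ref{Poincare remark}; this recovers the bounds for $\alphabar$ and $\sigma$ outside as well.

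For the improved rate $|\alpha|\lesssim\taup^{-5/2}\epsilon$ (needed even inside $V_T$, since the energy only yields $\taup^{-2}\taum^{-1/2}$) I would mimic Lemma~\ref{L-char decay}. Writing $\nabla^\mu F_{\mu\nu}=J_\nu(\phi)$ in the null frame produces a transport equation for $\alpha$ along the incoming direction, of the schematic form $\snab_{\Lbar}(r\alpha)=r\,\snab\rho+r\,\star\snab\sigma+r\,J_A(\phi)$. On the right, $\snab\rho=\snab(\rho-\overline{\rho})\lesssim r^{-1}|\LL_\Omega\rho|$ and similarly $\snab\sigma$, both $\lesssim r^{-1}\taup^{-2}\taum^{-1/2}\epsilon$ by the previous paragraph applied to $\LL_\Omega F$, while $|J_A(\phi)|=|\Im(\overline{\phi}\,\sD\phi)|\lesssim\taup^{-3}\taum^{-1}\epsilon^2$ by the weighted bounds \eqref{Linfinity phi} (the weights $w_\gamma^{-1/2}\le1$ only helping). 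As in Lemma~\ref{L-char decay} one runs this in the region $r\gtrsim t$: integrating along $\{\ubar=\mathrm{const}\}$ from $\Sigma_0$ — where $\taup\sim\taum\sim r$ and $r\alpha$ has the right initial size by \eqref{initial estimates} — and using $\taum\le\taup$ together with an elementary integration in $u$ gives $|\alpha|\lesssim\taup^{-5/2}\epsilon$ there; in the complementary region $r\lesssim t$ one has $\taum\gtrsim\taup$, so the energy bound $\taup^{-2}\taum^{-1/2}\epsilon$ already implies $\taup^{-5/2}\epsilon$.

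For $\rho$ \emph{outside} $V_T$ the relevant energy is infinite, so, as in Lemma~\ref{flux estimate}, write $\rho=\overline{\rho}+(\rho-\overline{\rho})$. Integrating the null Maxwell relation $-\slashed{\textrm{div}}\,\alpha-L\rho-2r^{-1}\rho=J_L(\phi)$ over $S_{t,r}$ and using $\int_{S_{t,r}}\slashed{\textrm{div}}\,\alpha=0$ gives $L\big(\int_{S_{t,r}}\rho\,dS_r\big)=-\int_{S_{t,r}}J_L(\phi)\,dS_r$; the cancellation $J_L(\phi)=\Im(\overline{\phi}\,(D_L+\tfrac1r)\phi)$ together with \eqref{Linfinity DL phi} and \eqref{Linfinity phi} gives $\int_{S_{t,r}}|J_L(\phi)|\,dS_r\lesssim r^{-3/2}\epsilon^2$, which is integrable along $C_u$; hence $\int_{S_{t,r}}\rho\,dS_r$ stays $\lesssim\epsilon$ uniformly (its value on $\Sigma_0$ being $\lesssim\epsilon$ by the Cauchy--Schwarz argument in Lemma~\ref{flux estimate}), so $|\overline{\rho}|\lesssim r^{-2}\epsilon$. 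Combining with $|\rho-\overline{\rho}|\lesssim\sum_\Omega|\LL_\Omega\rho|\lesssim\taup^{-2}\taum^{-1/2}\epsilon\lesssim r^{-2}\epsilon$ (Lemma~\ref{Poincare lemma} and the exterior energy bound for $\LL_\Omega F$, using $\taup\gtrsim r\gtrsim1$ on $\{u\le-1\}$) gives $|\rho|\lesssim r^{-2}\epsilon$ there, while inside $V_T$ the energy is finite and Klainerman--Sobolev gives the stronger $\taup^{-2}\taum^{-1/2}\epsilon$ directly.

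The step I expect to be most delicate is the improved decay for $\alpha$: one must check that \emph{every} source term in its null transport equation — in particular the angular current $J_A(\phi)$, and in the higher-order analogue the quadratic contributions from the null components of $F$ and of its derivatives (cf. Lemma~\ref{kth commutator} and Remark~\ref{L-char decay remark}) — decays strictly faster than $r^{-1}\taup^{-2}\taum^{-1/2}$ once \eqref{Linfinity phi} and \eqref{Linfinity DL phi} are inserted, and that the endpoint term on $\Sigma_0$ is absorbed by \eqref{initial estimates}. A subsidiary bookkeeping point is that, in the exterior, the energy control actually available for $\alpha,\alphabar,\sigma$ and $\LL_\Omega\rho$ is a bound on $\LL_\Omega F$ rather than on $F$ itself, so a rotation must be kept among the commuted vector fields throughout, consistent with the organization of the higher energies $Q_k$ and the comment following Theorem~\ref{main theorem}.
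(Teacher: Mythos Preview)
Your architecture matches the paper's: Poincar\'e passage to $\LL_\Omega F$ outside, a $\Lbar$-transport equation for $\alpha$, and the splitting $\rho=\overline\rho+(\rho-\overline\rho)$ with $L$-propagation of $\int_{S_{t,r}}\rho$. For $\alpha$ you take a genuinely different route: you integrate the transport equation pointwise from $\Sigma_0$, bounding $r\snab\rho$, $r\snab\sigma$, $rJ_A(\phi)$ in $L^\infty$. The paper instead uses the same equation only to extract an $L^2$ bound for $r^2\nabla_{\Lbar}\alpha$ on $\Sigma_t\cap\{r\ge 1+t/2\}$ and then applies a Sobolev inequality from \cite{CK1} (their Lemma~2.3) to $U=r\alpha$. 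Both are valid; yours needs the pointwise bounds on $\LL_\Omega\rho$, $\LL_\Omega\sigma$ already available, the paper's needs only weighted $L^2$. The paper similarly handles $\rho-\overline\rho$ and the interior $\rho$ via that \cite{CK1} lemma (with a cutoff $\chi(u)$), not via the global Lemma~\ref{Klainerman-Sobolev Lemma}---applying the latter directly to $\taup\rho$ would see the infinite exterior energy, so your ``Klainerman--Sobolev gives the stronger bound directly'' for the interior needs a localization you have not spelled out.

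There is one genuine circularity. To bound $\int_{S_{t,r}}|J_L(\phi)|$ you invoke \eqref{Linfinity DL phi}; but \eqref{Linfinity DL phi} is obtained from Lemma~\ref{L-char decay}, whose hypothesis includes the pointwise bound $\taup^2|\rho|\le C$---precisely what you are proving. The paper breaks the loop by using only $L^2$ flux at this step: by H\"older,
\[
\int_{C_u}|\phi|\,\Big|(D_L+\tfrac1r)\phi\Big|\;\le\;\big\|\taup(D_L+\tfrac1r)\phi\big\|_{L^2(C_u)}\,\Big\|\tfrac{\phi}{r}\Big\|_{L^2(C_u)}\;\lesssim\;\epsilon^2,
\]
which needs only $\EE^\gamma_0$. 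With this replacement your $\overline\rho$ argument closes. (The higher-derivative case requires a short induction on $k$, which the paper carries out using the structure \eqref{LLJ kth commutator}.) A related slip: Lemma~\ref{Poincare lemma} is an $L^2$ statement on spheres, so ``$|\rho-\overline\rho|\lesssim\sum_\Omega|\LL_\Omega\rho|$'' as written does not follow from it; the paper instead applies the \cite{CK1} Sobolev lemma to $r(\rho-\overline\rho)$.
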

\begin{proof}
For $\alphabar$ and $\sigma$ the estimates follow from the argument in \cite{CK1}. Indeed in view of Lemma \ref{Poincare lemma} and Remark \ref{Poincare remark} the energies for these components can be bounded by those of $\LL_\Omega\alphabar$ and $\LL_\Omega\sigma$ respectively. This puts us in the context of \cite{CK1} and we are able to use the Sobolev estimates there (in particular (3.54) and (3.56) in \cite{CK1}) to get the desired decay. To get the improved decay rate of $\taup^{-5/2}$ for $\alpha$ we need an additional argument. Taking the difference of the first two equations of (\ref{MKG}) with $\mu=L$ and $\nu=A$ gives
\begin{align*}
\nabla_{\Lbar}\alpha_{A}-r^{-1}\alpha_{A}-\slashed{\nabla}_{A}\rho
-\epsilon_{AB}\slashed{\nabla}_{B}\sigma=-\textrm{Im}(\phi\slashed{D}_{A}\phi).
\end{align*}
From this equation, we see that
\begin{align*}
\int_{\Sigma_{t}\bigcap\{r\geq 1+\frac{t}{2}\}}\Big(\sum_{|k|\leq 2}r^{2}|\alpha(\LL_{\Omega}^{k}F)|^{2}+\sum_{k\leq 1}r^{4}|\nabla_{\Lbar} \alpha(\LL^{k}_{\Omega}F)|^{2}\Big)dx\lesssim Q^{*}_{2}(\phi,F)(t).
\end{align*}
Applying Lemma 2.3 in \cite{CK1} to $U=r\alpha$, we have
\begin{align*}
|r\alpha|\lesssim r^{-3/2}Q_2^{*}(F,\phi).
\end{align*}
This proves estimate for $\alpha$ when $r\geq 1+\frac{t}{2}$. When $r\leq 1+\frac{r}{2}$, this estimate holds by Lemma \ref{Klainerman-Sobolev Lemma}. For $\rho$ we first observe that {the operator $\partial_{r}$ commutes with the average of $\rho$ on $S_{t,r}$, i.e. 
\begin{align*}
\partial_{r}\left(\frac{1}{r^{2}}\int_{S_{t,r}}\rho d\sigma_{S_{t,r}}\right)=\partial_{r}\left(\int_{\mathbb{S}^{2}}\rho d\sigma_{\mathbb{S}^{2}}\right)=\int_{\mathbb{S}^{2}}
\partial_{r}\rho d\sigma_{\mathbb{S}^{2}}=\frac{1}{r^{2}}\int_{S_{t,r}}\partial_{r}\rho d\sigma_{S_{t,r}}.
\end{align*}
Then we apply the second inequality of Lemma 2.3 in \cite{CK1} to 
$U=r(\rho-\overline{\rho})$ as well as \eqref{Poincare lemma} to get
\begin{align*}
|r(\rho-\overline{\rho})|\lesssim \taup^{-1}\taum^{-1/2}\left(\sum_{1\leq k\leq 2}\|r\LL_{\Omega}^{k}\rho\|_{L^{2}(\Sigma_{t})}+\|\taum(\partial_{r}\rho-\overline{\partial_{r}\rho})\|_{L^{2}(\Sigma_{t})}+\sum_{k=1}\|\taum\LL_{\Omega}\partial_{r}\rho\|_{L^{2}(\Sigma_{t})}\right).
\end{align*}
Since $\taum\partial_{r}\sim\Gamma$ and $[\Omega, \partial_{r}]=0$, and using Lemma \ref{Poincare lemma}, the right-hand-side of the above is bounded by
\begin{align*}
\taup^{-1}\taum^{-1/2}\left(\sum_{1\leq k\leq 2}\|r\LL_{\Omega}^{k}\rho\|_{L^{2}(\Sigma_{t})}+\|\LL_{\Gamma}\rho-\overline{\LL_{\Gamma}\rho}\|_{L^{2}(\Sigma_{t})}+\sum_{k=1}\|\LL_{\Omega}\LL_{\Gamma}\rho\|_{L^{2}(\Sigma_{t})}\right)\lesssim\taup^{-1}\taum^{-1/2}\epsilon.
\end{align*}
Now we have already seen that $|\con{\rho}|\lesssim\epsilon r^{-2}$, in the compactly supported case.} In the non-compactly supported case we note that the equation satisfied by $\rho$ is
\Aligns{
-\slashed{\mathrm{div}} \alpha-L\rho-\frac{2}{r}\rho=\frac{1}{2}\varepsilon_{LAB\alpha}\Im(\phi\overline{D^\alpha\phi})=:f.
}
To see this we write the MKG equations as
\Aligns{
\nabla_{[\lambda}F_{\mu\nu]}=0,\quad \nabla_{[\lambda}\star F_{\mu\nu]}=-\varepsilon_{\lambda\mu\nu\alpha}\Im(\phi \overline{D^\alpha \phi}),
}
and compute the $[LAB]$ component of the second equation above using the relations
\Aligns{
\alphabar(\star F)_A=-\varepsilon_{BA}\alphabar(F)^B,\quad \alpha(\star F)_A=\varepsilon_{BA}\alpha(F)^B,\quad \star F_{AB}=\rho.
}
See \cite{CK1} for more details. It follows that $$|f|\lesssim |\phi||(D_{L}+\frac{1}{r})\phi|.$$ Repeating the argument from the case where $\phi$ is compactly supported, we get
\Aligns{
\left|L\left(\int_{S_{t,r}}\rho dS_{t,r}\right)\right|\lesssim \int_{S_{t,r}}|\phi||(D_L+\frac{1}{r})\phi|dS_{t,r}.
}
Integrating this equation in the $\ubar$ direction gives
\Aligns{
\left|\int_{S_{t,r}}\rho dS_{t,r}\right|\lesssim\left|\int_{S_{0,t-r}}\rho dS_{0,t-r}\right|+\int_{C_u}|\phi||(D_L+\frac{1}{r})\phi|dC_u.
}
The first term is bounded by the initial data as in the case of compactly supported scalar fields, and the second term can be bounded by H\"older's inequality as
\Aligns{
\int_{C_u}|\phi||(D_L+\frac{1}{r})\phi|dC_u\lesssim\|\taup(D_L+\frac{1}{r})\phi\|_{L^2(C_u)}\|\frac{\phi}{r}\|_{L^2(C_u)}\lesssim\epsilon^2.
}
We also need to inductively consider the case where $F$ is differentiated $k$ times. This amounts to commuting Lie derivatives with the equation of $F$ and hence differentiating $f$ above. The exact form of the resulting right hand side is computed in (\ref{LGamma J}) and (\ref{LLJ kth commutator}-\ref{LLJ kth commutator error}) with $\mu=L$ below. In view of the cancellations (\ref{LLJ cancelations}) every term in (\ref{LLJ kth commutator}) below can be bounded as above, except $E.$ For $E$ we observe that since $\mu=L,$
\Aligns{
|E|\lesssim \taup|\LL^{k_1}\alpha||\LL^{k_2}\phi||\LL^{k_3}\phi|+\taum|\LL^{k_1}\rho||\LL^{k_2}\phi||\LL^{k_3}\phi|,
}
where at least one of $\LL^{k_2}\phi$ and $\LL^{k_3}\phi$ can be bounded in $L^\infty.$ Moreover, by induction the average of $\LL^{k_1}\rho$ is bounded by $\epsilon r^{-2}.$ It follows that from Lemma \ref{Poincare lemma} that (suppressing the Lie derivatives from the notation)
\Aligns{
\int_{C_u}|E|\lesssim&\int_{C_u}|\frac{\phi}{r}|^2\taum+\left(\int_{C_u}|\alpha|^2\taup^2\right)^{1/2}\left(\int_{C_u}|\frac{\phi}{r}|^2\taum^{-1}\right)^{1/2}\\
&+\left(\int_{C_u}|\LL_\Omega\rho|^2\taum^2\right)^{1/2}\left(\int_{C_u}|\frac{\phi}{r}|^2\taum^{-1}\right)^{1/2}\lesssim\epsilon^2.
}
Finally to get the improved decay of $\rho$ inside $V_T$ we introduce a smooth cut-off 
\begin{align*}
\chi(s):=
\begin{cases}
1 & s\geq -3/4\\
0 & s\leq -1.
\end{cases}
\end{align*}
Note that for $u\geq-3/4$ we have $\chi(t-r)\cdot\rho=\rho$. Applying Lemma 2.3 in \cite{CK1} to $r\chi(t-r)\cdot\rho$, we see that for any point $\{u\geq-3/4\}$
\begin{align*}
r|\rho(t,x)|=|r\chi(t-r)\cdot\rho(t,x)|
\lesssim\taup^{-1}\taum^{-1/2}(Q_2^{*}+\EE_2^\gamma).
\end{align*}
These give the desired decay estimates in $\{u\geq-3/4\},$ but since $\taum$ is like a constant for $u\in [-3/4,-1],$ the estimates in this case hold by the earlier arguments.
\end{proof}
\begin{proof}[Proof of Theorem \ref{main theorem} (part 1: outside $V_T$)]
{We use the same Morawetz estimates for $F,$ replacing $F$ by $\LL_\Omega F$ as before, with the difference that now we need to keep track of the error terms coming from the fact that the equation is no longer conformally Killing invariant. For the scalar field $\phi$ we use the modified Morawetz estimate of Lemma \ref{fractional Morawetz lemma}. In order to estimate the right hand side of the Morawetz estimates, we assume that the left hand sides are bounded by a small constant $\epsilon^{2}$ up to some time $t=T,$ and use a bootstrap argument to improve this bound. Once we close the bootstrap, the decay rates for the scalar field and $F$ follow from the previous lemma and the paragraph preceding it. The flux estimate on $C_{-1}$ follow as in Lemma \ref{flux estimate} and the bound on $\rhobar$ from the previous lemma. We now proceed to bound the error terms from our application of the Morawetz estimates as outlined above. For the scalar field by Lemma \ref{1st commutator}}
\begin{align}\label{one commutator}
D^{\mu}D_{\mu}\LL_{\Gamma}\phi=2\Gamma^{\nu}F_{\mu\nu}D^{\mu}\phi+\nabla^{\mu}\Gamma^{\nu}F_{\mu\nu}\phi+\Gamma^{\mu}J_{\mu}(\phi)\phi.
\end{align} 
If we commute more than one derivative, then according to Lemma \ref{kth commutator}
\begin{align*}
D^{\mu}D_{\mu}\LL^{k}_{X_{1}...X_{k}}\phi=G_{k},
\end{align*}
where
\begin{align}\label{multi commutators}
|G_{k}| & \lesssim \left| 2L^{|I_1|}_{X_{I_1}}X_{i_1}^\alpha \LL^{|I_2|}_{X_{I_2}}F_{\mu\alpha}D^\mu \LL^{|I_3|}_{X_{I_3}}\phi+ \nabla^\mu L^{|I_1|}_{X_{I_1}}X_{i_1}^\alpha \LL^{|I_2|}_{X_{I_2}}F_{\mu\alpha}\LL^{|I_3|}_{X_{I_3}}\phi\right|\\\notag
& + \left| L^{|I_1|}_{X_{I_1}}X_{i_1}^\alpha \nabla^\mu \LL^{|I_2|}_{X_{I_2}}F_{\mu\alpha}\LL^{|I_3|}_{X_{I_3}}\phi \right|+\left| L_{X_{I_1}}^{|I_1|}X_{i_1}^\alpha L_{X_{I_2}}^{|I_2|}X_{i_2}^\beta\LL_{X_{I_3}}^{|I_3|}F_{\mu\alpha}\LL_{X_{I_4}}^{|I_4|}F^\mu_{~\beta}\LL_{X_{I_5}}^{|I_5|}\phi \right|.
\end{align}
Since the first and second terms above are in exactly the same form as the right hand side of \eqref{one commutator}, we only give the details for the right hand side of \eqref{one commutator} discussing two cases: the electromagnetic fields receive more derivatives than the scalar fields and vice versa. However, for the second term we should also consider the contributions which arise from commuting derivatives with the equation for $F,$ which are present only if we commute more than one derivative. These terms are computed in (\ref{A2 error 1}) and (\ref{A2 error 2}) below. We start with the first term on the right hand side of \eqref{one commutator}. If $\Gamma=S$,
\begin{align}\label{outside SP error S}
|S^\nu F_{\mu\nu}D^\mu\phi|+|\nabla^\mu S^\nu F_{\mu\nu}\phi|&\leq\ubar (|\rho||D_L\phi|+|\alpha||\sD\phi|)+u(|\rho||D_{\Lbar}\phi|+|\alphabar||\sD\phi||)\\\notag
&~+(\nabla^LS^{\Lbar}-\nabla^{\Lbar}S^L)\rho\phi+\frac{1}{2}(\nabla^AS^B-\nabla^BS^A)F_{AB}\phi\\\notag
&\lesssim\taup(|\rho||\left(D_L+\frac{1}{r}\right)\phi|+|\alpha||\sD\phi|)+\taum(|\rho||\left(D_{\Lbar}-\frac{1}{r}\right)\phi|\\\notag
& +|\alphabar||\sD\phi|) +|\rho||\phi|.
\end{align}
If $\Gamma=\Omega_{ij}$, we need to exploit the structure of $2\Omega_{ij}^AF_{A\mu}D^\mu\phi+\nabla^\mu\Omega_{ij}^\nu F_{\mu\nu}\phi.$ Specifically we have
\Aligns{
2\Omega_{ij}^AF_{\mu A}D^\mu\phi+\nabla^\mu\Omega_{ij}^\nu F_{\mu\nu}\phi=&2\Omega_{ij}^AF_{BA}
D^B\phi-\Omega_{ij}^AF_{LA}D_{\Lbar}\phi-\Omega_{ij}^AF_{\Lbar A}D_L\phi\\
&+\frac{\Omega_{ij}^A}{r}F_{LA}\phi-\frac{\Omega_{ij}^A}{r}F_{\Lbar A}\phi+(2e_B(\Omega_{ij}^A)+\Omega_{ij}^C\SGamma_{CB}^A)F^B_{~A}\phi,
}
so
\Align{\label{outside SP error rotation}
|2\Omega_{ij}^AF_{A\mu}D^\mu\phi+\nabla^\mu\Omega_{ij}^\nu F_{\mu\nu}\phi|\lesssim&\taup\left(|\alpha||(D_{\Lbar}-\frac{1}{r})\phi|+|\alphabar||(D_L+\frac{1}{r})\phi||+|\sigma|(|\sD\phi|+\frac{|\phi|}{r})\right).
}
Finally, if $\Gamma=\Omega_{0i}$,
{\Aligns{
2\Omega_{0i}^\nu &F_{\mu\nu}D^\mu\phi+\nabla^\mu\Omega_{0i}^\nu F_{\mu\nu}\phi\\
&=2t\omega^A_{i}F_{BA}\sD^B\phi+t\omega_i^AF_{AL}D_{\Lbar}\phi+\omega_i^A F_{AL}\phi+\ubar\omega_iF_{AL}\sD^A\phi\\
&\quad+t\omega_i^AF_{A\Lbar}D_L\phi+\omega_i^AF_{A\Lbar}\phi-u\omega_iF_{A\Lbar}\sD^A\phi\\
&\quad-\frac{1}{2}\ubar\omega_iF_{\Lbar L}D_L\phi+\frac{1}{2}u\omega_{i}F_{L\Lbar}D_{\Lbar}\phi+\omega_iF_{L\Lbar}\phi,
}}
and therefore
\Aligns{
\left|2\Omega_{0i}^\nu F_{\mu\nu}D^\mu\phi\nabla^\mu\Omega_{0i}^\nu F_{\mu\nu}\phi\right|&\lesssim|\alpha||tD_{\Lbar}\phi+\phi|+|\alphabar||tD_L\phi+\phi|\\
&\quad+\left(\taup(|\sigma|+|\rho|+|\alpha|)+\taum|\alphabar|\right)(|\sD\phi|+\frac{|\phi|}{r})\\
&\quad+|\rho|(\taup|D_L\phi|+\taum|D_{\Lbar}\phi|).
}
So
\begin{align}\label{outside SP error boots}
\left|2\Omega_{0i}^\nu F_{\mu\nu}D^\mu\phi\nabla^\mu\Omega_{0i}^\nu F_{\mu\nu}\phi\right|&\lesssim|\alpha|\taup|D_{\Lbar}\phi-\frac{1}{r}\phi|+|\alphabar|\taum|D_L\phi+\frac{1}{r}\phi|\\\notag
&\quad+\left(\taup(|\sigma|+|\rho|+|\alpha|)+\taum|\alphabar|\right)(|\sD\phi|+\frac{|\phi|}{r})\\\notag
&\quad+|\rho|(\taup|D_L\phi+\frac{1}{r}\phi|+\taum|D_{\Lbar}\phi-\frac{1}{r}\phi|).
\end{align}
We also need to consider the contribution from commuting with $\Gamma=\partial_{\mu}$. Notice that  $\nabla^{\mu}\Gamma^{\nu}=0$, so we only need to consider the first term. For $\Gamma=\partial_{t}=\frac{1}{2}(L+\Lbar)$, we have
\begin{align*}
|2\Gamma^{\nu}F_{\mu\nu}D^{\mu}\phi|\lesssim|\rho|(|D_{L}\phi|+|D_{\Lbar}\phi|)+|\sD\phi|(|\alpha|+|\alphabar|),
\end{align*}
which has already been considered when we estimate $\EE_{1}^{1}(S)$. For $\Gamma=\partial_{i}$, by the second equation of \eqref{vf decompositions}
\begin{align*}
|2\Gamma^{\nu}F_{\mu\nu}D^{\mu}\phi|\lesssim|\rho|(|D_{L}\phi|+|D_{\Lbar}\phi|)+|\sD\phi|(|\alpha|+|\alphabar|)+|\alpha||D_{\Lbar}\phi|+|\alphabar||D_{L}\phi|.
\end{align*}
The terms $|\alpha||D_{\Lbar}\phi|+|\alphabar||D_{L}\phi|$ can be bounded by
\begin{align*}
|\alpha||(D_{\Lbar}-\frac{1}{r})\phi|+|\alphabar||(D_{L}+\frac{1}{r})\phi|+\frac{(|\alpha|+|\alphabar|)|\phi|}{r}
\end{align*}
which has better decay than the terms already considered. We treat the terms on the right hand sides of the expressions above separately.
\Aligns{
\|\taup^4\taum |\rho|^2|(D_L+\frac{1}{r})\phi|^2\wgamma\|_{L_1(\Vo)}&\leq\|r^2\rho\|_{L^\infty}^2\|\taum|(D_L+\frac{1}{r})\phi|^2\wgamma\|_{L^1(\Vo)}\\
&\lesssim\epsilon^2\|
\taum^2|(D_L+\frac{1}{r})\phi|^{2}\wgammap\|_{L^1(\Vo)}\lesssim\epsilon^4.
}
Here some more care is needed if $\rho$ is differentiated enough times that it cannot be bounded in $L^\infty.$ Suppose $\rho$ is replaced by $\LL^N\rho$ in the formula above. If $N$ is the maximum number of derivatives we are commuting, then $\LL^N=\LL_\Omega\LL^{N-1}$ so $\LL^N\rho=\rho(\LL_\Omega\LL^{N-1}F)$ can be placed in $L^2(C_u)$ with the the weight $\taum^2$ and we get 
\Aligns{
\|\taup^4\taum |\LL^N\rho|^2|(D_L+\frac{1}{r})\phi|^2\wgamma\|_{L^1(\Vo)}&\leq\|\taum\taup^{-1}|\LL^N \rho|^2\|_{L^1(\Vo)}\|\taup^5|(D_L+\frac{1}{r})\phi|^2\wgamma\|_{L^\infty}\\
&\lesssim\epsilon^2\int_{-\infty}^{-1}\taum^{-2}\|\taum\LL^N\rho\|_{L^2(C_u)}^2du\lesssim\epsilon^4.
}
If $N$ is less than the maximum number of derivatives but large enough that $\LL^N\rho$ cannot be placed in $L^\infty,$ then we write $\LL^N\rho=\LL^N\rho-\overline{\LL^N\rho}+\overline{\LL^N\rho}.$ The averaged term can be bounded in $L^\infty$ as explained above. For the difference, since we have at least one more $\LL_\Omega$ derivative at our disposal, we use the Poincar\'e inequality from Lemma \ref{Poincare lemma} to bound this term in $L^2(C_u)$ as above. Next
\begin{align*}
\|\taup^2\taum^3|\rho|^2|(D_{\Lbar}-\frac{1}{r})\phi|^2\wgamma\|_{L^1(\Vo)} &\lesssim\|\taup^{4}|\rho|^{2}\|_{L^{\infty}}\cdot\|\taup^{-2}\taum^{4}|\left(D_{\Lbar}-\frac{1}{r}\right)\phi|^{2}w'_{\gamma}\|_{L^{1}(\Vo)}\\
& \lesssim\epsilon^{2}\|\tau_{0}^{3/2}\taum^{2}|\left(D_{\Lbar}-\frac{1}{r}\right)\phi|^{2}w'_{\gamma}\|_{L^{1}(\Vo)}\lesssim \epsilon^{4},
\end{align*}
or
\begin{align*}
\|\taup^2\taum^3|{\LL_\Omega\rho}|^2|(D_{\Lbar}-\frac{1}{r})\phi|^2\wgamma\|_{L^1(\Vo)} &\lesssim\|\taup^{2}\taum^{3}|\left(D_{\Lbar}-\frac{1}{r}\right)\phi|^{2}w_{\gamma}\|_{L^{\infty}}\||{\LL_\Omega}\rho|^{2}\|_{L^1(\Vo)}\lesssim\epsilon^{4},
\end{align*}
and
\begin{align*}
&\|\taup^{2}\taum|\rho|^{2}|\phi|^{2}w_{\gamma}\|_{L^{1}(\Vo)}\lesssim\|\taup^{4}|\rho|^{2}\|_{L^{\infty}}\|\tau_{0}^{3/2}|\phi|^{2}w'_{\gamma}\|_{L^{1}(\Vo)}\lesssim\epsilon^{4},\\
&\|\taup^{2}\taum|{\LL_\Omega}\rho|^{2}|\phi|^{2}w_{\gamma}\|_{L^{1}(\Vo)}\lesssim\|\taup^{2}\taum|\phi|^{2}w_{\gamma}\|_{L^{\infty}}\||{\LL_\Omega}\rho|^{2}\|_{L^{1}(\Vo)}\lesssim\epsilon^{4}.
\end{align*}
Similarly,
\Aligns{
&\|\taup^4\taum|\alpha|^2|\sD\phi|^2\wgamma\|_{L^1(\Vo)}\leq\|\taup^2\taum^{1/2}|\alpha|\|_{L^\infty}^2\||\sD\phi|^2\wgamma\|_{L^1(\Vo)}\lesssim\epsilon^4,\\
&\|\taup^4\taum|\alpha|^2|\sD\phi|^2\wgamma\|_{L^1(\Vo)}\leq\||\alpha|^2\|_{L^1(\Vo)}\|\taup^4\taum|\sD\phi|^2\wgamma\|_{L^\infty}\lesssim\epsilon^4.
}
$\|\taup^2\taum^3|\alphabar|^2|\sD\phi|^2\wgamma\|_{L^1(\Vo)}$ can be estimated in a similar fashion.
\Aligns{
\|\taup^4\taum|\alphabar|^2|(D_L+\frac{1}{r})\phi|^2\wgamma\|_{L^1(\Vo)}\leq\|\taup^2\taum^3|\alphabar|^2\|_{L^\infty}\|\taum^{-2}|\taup(D_L+\frac{1}{r})\phi|^2\wgamma\|_{L^1(\Vo)}\lesssim\epsilon^4.
}
If there are more derivatives on $\alphabar$, we need a more delicate analysis. Denoting by $r_m(u)$ and $r_M(u)$ the smallest and largest radii on $C_u,$ respectively, we have
\Align{\label{first L4}
&\iint\taup^{4}\taum\left|\alphabar\right|^{2}\left|\left(D_{L}+\frac{1}{r}\right)\phi\right|^{2}w_{\gamma}dxdt \\
&=\int_{-\infty}^{-1}\int_{r_{m}(u)}^{r_{M}(u)}\int_{S_{t,r}}\taup^{4}\taum\left|\alphabar\right|^{2}\left|\left(D_{L}+\frac{1}{r}\right)\phi\right|^{2}dS_{t,r}drw_{\gamma}du\\
&\lesssim \int_{-\infty}^{-1}\int_{r_{m}(u)}^{r_{M}(u)}\left(\int_{S_{t,r}}r^{6}\left|\left(D_{L}+\frac{1}{r}\right)\phi\right|^{4}\right)^{1/2}\left(\int_{S_{t,r}}r^{2}\taum^{6}\left|\alphabar\right|^{4}\right)^{1/2}drw_{\gamma}\taum^{-2}du.
}
The inner integral
\begin{align*}
\int_{r_{m}(u)}^{r_{M}(u)}\left(\int_{S_{t,r}}r^{6}\left|\left(D_{L}+\frac{1}{r}\right)\phi\right|^{4}\right)^{1/2}\left(\int_{S_{t,r}}r^{2}\taum^{6}\left|\alphabar\right|^{4}\right)^{1/2}dr
\end{align*}
is bounded by
\begin{align*}
\int_{r_m(u)}^{r_M(u)}\left(\int_{S_{u,r}}|(D_L+\frac{1}{r})\phi|^4r^6\right)^{1/2}dr\cdot\sup_{r_m(u)\leq r\leq r_M(u)}\left(\int_{S_{u,r}}|\alphabar|^4r^2\taum^6\right)^{1/2}.
\end{align*}
By Lemma \ref{L4 Sobolev lemma},
{\begin{align*}
\Big(\int_{\widetilde{S}_{u,r}}r^{2}\taum^{6}|\alphabar|^{4}\Big)^{1/4}&\lesssim ``\textrm{initial data}"\\
&\quad +\Big(\int_{\Cbar_{\ubar}}\taum^{2}|\alphabar|^{2}+r^{2}\taum^{2}|\slashed{\nabla}|\alphabar||^{2}+\taum^{2}|\Lbar(\taum |\alphabar|)|^{2}\Big)^{1/2}\\
&\lesssim``\textrm{initial data}"+\Big(\int_{\Cbar_{\ubar}}\taum^{2}|\alphabar|^{2}+\taum^{2}|\LL\alphabar|^{2}\Big)^{1/2}\lesssim\epsilon.
\end{align*}}
For the other term we apply Lemma \ref{isoperimetric lemma} to $f=r^{3}|\left(D_{L}+\frac{1}{r}\right)\phi|^{2}$ to get
\begin{align*}
&\int_{r_{m}(u)}^{r_{M}(u)}\left(\int_{\widetilde{S}_{u,r}}r^{6}|\left(D_{L}+\frac{1}{r}\right)\phi|^{4}\right)^{1/2}dr\\
&\lesssim\int_{r_{m}(u)}^{r_{M}(u)}\left(\int_{\widetilde{S}_{u,r}}\left(r^{2}|\left(D_{L}+\frac{1}{r}\right)\phi|^{2}+r^{3}|\left(D_{L}+\frac{1}{r}\right)\phi||\sD \left(D_{L}+\frac{1}{r}\right)\phi|\right)\right)dr.
\end{align*}
Bounding the second term above as
\begin{align*}
|\sD\left(D_{L}+\frac{1}{r}\right)\phi|\sim \frac{1}{r}|D_{\Omega}\left(D_{L}+\frac{1}{r}\right)\phi|\sim|\frac{1}{r}|\left(D_{L}+\frac{1}{r}\right)\LL_{\Omega}\phi|+|\alpha||\phi|,
\end{align*}
we have
\begin{align*}
&\int_{r_{m}(u)}^{r_{M}(u)}\left(\int_{\widetilde{S}_{u,r}}r^{6}|\left(D_{L}+\frac{1}{r}\right)\phi|^{4}\right)^{1/2}dr\\
&\lesssim\int_{C_{u}}\left(r^{2}|\left(D_{L}+\frac{1}{r}\right)\phi|^{2}+r^{2}|\left(D_{L}+\frac{1}{r}\right)\LL_{\Omega}\phi|^{2}+r^{4}|\alpha|^{2}|\phi|^{2}\right)\\
&\lesssim\epsilon^{2}+\int_{C_{u}}\left(r^{4}|\alpha|^{2}|\phi|^{2}\right).
\end{align*}
Since now there are less derivatives on $\phi$, the last term above is bounded as:
\begin{align*}
\int_{C_{u}}\left(\taup^{2}\taum^{-1}|\alpha|^{2}\right)\lesssim\int_{C_{u}}\taup^{2}|\alpha|^{2}\lesssim\epsilon^{2}.
\end{align*}
The estimates for $\|\taup^4\taum|\alpha|^2|(D_{\Lbar}-\frac{1}{r})\phi|^2\wgamma\|_{L^1(\Vo)}$ are similar:
\begin{align*}
\|\taup^4\taum|\alpha|^2|(D_{\Lbar}-\frac{1}{r})\phi|^2\wgamma\|_{L^1(\Vo)}\lesssim \|\taup^{2}\taum^3\left|\left(D_{\Lbar}-\frac{1}{r}\right)\phi\right|^{2}w_{\gamma}\|_{L^{\infty}}
\int_{-\infty}^{-1}\taum^{-2}\|\taup^{2}|\alpha|^{2}\|_{L^{1}(C_{u})}du\lesssim\epsilon^{4}.
\end{align*}
If there are more derivatives hitting $\left(D_{\Lbar}-\frac{1}{r}\right)\phi$, we use $L^{4}$ estimates:
\begin{align*}
&\iint\taup^{4}\taum\left|\alpha\right|^{2}\left|\left(D_{\Lbar}-\frac{1}{r}\right)\phi\right|^{2}w_{\gamma}dxdt \\
&=\int_{-\infty}^{-1}\int_{r_{m}(u)}^{r_{M}(u)}\taup^{4}\taum\left|\alpha\right|^{2}\left|\left(D_{\Lbar}-\frac{1}{r}\right)\phi\right|^{2}dS_{t,r}drw_{\gamma}du\\
&\lesssim \int_{-\infty}^{-1}\int_{r_{m}(u)}^{r_{M}(u)}\left(\int_{S_{t,r}}r^{2}\taum^{6}\left|\left(D_{\Lbar}-\frac{1}{r}\right)\phi\right|^{4}\right)^{1/2}\left(\int_{S_{t,r}}r^{6}\left|\alpha\right|^{4}\right)^{1/2}dr\taum^{-2}\wgamma du.
\end{align*}
The rest of the argument is similar to the previous case. Next,
\Aligns{
&\|\taup^4\taum|\sigma|^2|\sD\phi|^2\wgamma\|_{L^1(\Vo)}\leq\|\taup^4\taum|\sigma|^2\|_{L^\infty}\||\sD\phi|^2\wgamma\|_{L^1(\Vo)}\lesssim\epsilon^4,\\
&\|\taup^4\taum|\sigma|^2|\sD\phi|^2\wgamma\|_{L^1(\Vo)}\leq\||\sigma|^2\|_{L^1(\Vo)}\|\taup^4\taum|\sD\phi|^2\wgamma\|_{L^\infty}\lesssim\epsilon^4.
}
The case where $\sD\phi$ is replaced by $\frac{\phi}{r}$ is treated in the same way. For the second term in \eqref{multi commutators} and the last term in \eqref{one commutator}, note that at least one of the two $|\phi|^2$ factors can be placed in $L^\infty.$
\Aligns{
&\|\taup^4\taum|\phi|^4|\sD\phi|^2\wgamma\|_{L^1(\Vo)}\leq\|\taup^4\taum^2|\phi|^4\|_{L^\infty}\|\taum^{-1}|\sD\phi|^2\wgamma\|_{L^1(\Vo)}\lesssim\epsilon^6.\\
&\|\taup^4\taum|\phi|^4|\sD\phi|^2\wgamma\|_{L^1(\Vo)}\leq\|\taup^6\taum^2|\phi|^2|\sD\phi|^2\|_{L^\infty}\left\|\taum^{-1}\left|\frac{\phi}{r}\right|^2\wgamma\right\|_{L^1(\Vo)}\lesssim\epsilon^6.
}
The estimates for $\|\taup^4\taum|\phi|^4|(D_L+\frac{1}{r})\phi|^2\wgamma\|_{L^1(\Vo)}$ and $\|\taup^2\taum^3|\phi|^4|(D_{\Lbar}-\frac{1}{r})\phi|^2\wgamma\|_{L^1(\Vo)}$ are similar. Next, we estimate the contribution from the last term in \eqref{multi commutators}. We keep in mind that since these terms arise only after commuting two or more derivatives $\phi$ can aways be bounded in $L^\infty.$ {Using} \eqref{vectorfield commutators}, we write these contributions as
\Align{\label{higher derivative extra term}
&\taup^2|\phi|(|\alpha|^2+|\alpha||\rho|+|\alpha||\sigma|+|\alpha||\alphabar|+|\sigma|^2)\\
&+\taup\taum|\phi|(|\rho|^2+|\alpha||\alphabar|+|\rho||\alphabar|+|\sigma||\alphabar|)+\taum^2|\phi||\alphabar|^2.
}
We have
{\begin{align*}
\|\taup^{6}\taum|\alpha|^{4}|\phi|^{2}w_{\gamma}\|_{L^{1}(\Vo)}&\lesssim\|\taup^{5}|\alpha|^{2}\|_{L^{\infty}}\|\taup\taum|\alpha|^{2}|\phi|^{2}\wgamma\|_{L^{1}(\Vo)}\\
&\lesssim \|\taup^{5}|\alpha|^{2}\|_{L^{\infty}}\|\taup^{2}\taum|\phi|^{2}\wgamma\|_{L^{\infty}}\|\taup^{-1}|\alpha|^{2}\|_{L^{1}(\Vo)}\lesssim\epsilon^{6}.
\end{align*}}
\begin{align*}
\|\taup^{6}\taum|\alpha|^{2}|\rho|^{2}|\phi|^{2}\wgamma\|_{L^{1}(\Vo)}&\lesssim\|\taup^{5}|\alpha|^{2}\|_{L^{\infty}}\|\taup^{2}\taum|\phi|^{2}\wgamma\|_{L^{\infty}}\|\taup^{-1}|{\LL_\Omega}\rho|^{2}\|_{L^{1}(\Vo)}\\
&\quad{+}\|\taup^{5}|\alpha|^{2}\|_{L^{\infty}}\|\taup^{4}|{\rhobar}|^{2}\|_{L^{\infty}}\|\taup^{-3}\taum|\phi|^{2}\wgamma\|_{L^{1}(\Vo)}\lesssim\epsilon^{6},\\
&\textrm{or}\\
&\lesssim\|\taup^{4}|\rho|^{2}\wgamma\|_{L^{\infty}}\|\taup^{2}\taum|\phi|^{2}\|_{L^{\infty}}{\||\alpha|^{2}\|_{L^{1}(\Vo)}}\lesssim\epsilon^{6}.
\end{align*}
{\begin{align*}
\|\taup^{6}\taum|\alpha|^{2}|\alphabar|^{2}|\phi|^{2}\wgamma\|_{L^{1}(\Vo)}&\lesssim\|\taup^{2}\taum|\phi|^{2}\wgamma\|_{L^{\infty}}\|\taup^4|\alphabar|^{2}|\alpha|^{2}\|_{L^{1}(\Vo)}\\
&\lesssim\epsilon^2\int_{-\infty}^{-1}\int_{r_{m}(u)}^{r_{M}(u)}\left(\int_{S_{u,r}}r^{6}|\alpha|^{4}\right)^{1/2}\left(\int_{S_{u,r}}r^{2}\taum^{6}|\alphabar|^{4}\right)^{1/2}dr\taum^{-3}du\\
&\lesssim\epsilon^6.
\end{align*}
\begin{align*}
\|\taup^{6}\taum|\sigma|^{4}|\phi|^{2}\wgamma\|_{L^{1}(\Vo)}&\lesssim\|\taup^{2}\taum|\phi|^{2}\wgamma\|_{L^{\infty}}\|\taup^{4}\taum|\sigma|^{2}\|_{L^{\infty}}\|\taum^{-1}|\sigma|^{2}\|_{L^{1}(\Vo)}\lesssim\epsilon^{6}.
\end{align*}}
\begin{align*}
\|\taup^{4}\taum^{3}|\rho|^{4}|\phi|^{2}\wgamma\|_{L^{1}(\Vo)}&\lesssim\|\taup^{2}\taum|\phi|^{2}\wgamma\|_{L^{\infty}}\|\taup^{4}|\rho|^{2}\|_{L^{\infty}}\|\taup^{-2}\taum^{2}|{\LL_\Omega}\rho|^{2}\|_{L^{1}(\Vo)}\\
&\quad{+\|\taup^{8}|\rho|^{2}|\rhobar|^2\|_{L^{\infty}}\|\taup^{-4}\taum^{3}|\phi|^{2}\wgamma\|_{L^{1}(\Vo)}\lesssim\epsilon^{4}\|\tau_{0}^{3/2}|\phi|^{2}w'_{\gamma}\|_{L^{1}(\Vo)}\lesssim\epsilon^{6}.}
\end{align*}
\begin{align*}
\|\taup^{4}\taum^{3}|\rho|^{2}|\alphabar|^{2}|\phi|^{2}\wgamma\|_{L^{1}(\Vo)}&\lesssim\|\taup^{2}\taum|\phi|^{2}\wgamma\|_{L^{\infty}}\|\taup^{4}|\rho|^{2}\|_{L^{\infty}}\|\taup^{-2}\taum^{2}|\alphabar|^{2}\|_{L^{1}(\Vo)}{\lesssim\epsilon^6}\\
&\textrm{or}\\
&\lesssim\|\taup^{2}\taum|\phi|^{2}\wgamma\|_{L^{\infty}}\|\taup^{2}\taum^{3}|\alphabar|^{2}\|_{L^{\infty}}\|\taum^{-1}|{\LL_\Omega}\rho|^{2}\|_{L^{1}(\Vo)}\\
&\quad{+\|\taup^{2}\taum|\phi|^{2}\wgamma\|_{L^{\infty}}\|\taup^{4}|\rho||\rhobar|\|_{L^{\infty}}\|\taup^{-2}\taum^{2}|\alphabar|^{2}\|_{L^{1}(\Vo)}\lesssim\epsilon^6.}
\end{align*}
\begin{align*}
&{\|\taup^{4}\taum^{3}|\sigma|^{2}|\alphabar|^{2}|\phi|^{2}\wgamma\|_{L^{1}(\Vo)}\lesssim\|\taup^{2}\taum|\phi|^{2}\wgamma\|_{L^{\infty}}\|\taup^{4}\taum|\sigma|^{2}\|_{L^{\infty}}\|\taup^{-2}\taum|\alphabar|^{2}\|_{L^{1}(\Vo)}\lesssim\epsilon^6.}\\
&\|\taup^{2}\taum^{5}|\alphabar|^{4}|\phi|^{2}\wgamma\|_{L^{1}(\Vo)}\lesssim\|\taup^{2}\taum|\phi|^{2}\wgamma\|_{L^{\infty}}\|\taup^{2}\taum^{3}|\alphabar|^{2}\|_{L^{\infty}}\|\taup^{-2}\taum|\alphabar|^{2}\|_{L^{1}(\Vo)}\lesssim{\epsilon^{6}.}
\end{align*}
The contributions of (\ref{A2 error 1}) and (\ref{A2 error 2}) are estimated using the following point-wise bounds:
\allowdisplaybreaks{\begin{align*}
&\taup^6\taum|\alpha|^2|\phi|^6\wgamma\lesssim\epsilon^6\taum^{-2}\wgamma^{-2}|\alpha|^2,\quad\quad\quad\quad\taup^6\taum|\alpha|^2|\phi|^6\wgamma\lesssim\epsilon^6\taup^{-1}\taum^{-1}\wgamma^{-1}|\frac{\phi}{r}|^2,\\
&\taup^4\taum^3|\alphabar|^2|\phi|^6\wgamma\lesssim\epsilon^6\taup^{-2}\wgamma^{-2}|\alphabar|^2,\quad\quad\quad\quad\taup^4\taum^3|\alphabar|^2|\phi|^6\wgamma\lesssim\epsilon^6\taum^{-2}\wgamma^{-1}|\frac{\phi}{r}|^2,\\
&\taup^6\taum|\sigma|^2|\phi|^6\wgamma\lesssim\epsilon^6\taum^{-2}\wgamma^{-2}|\sigma|^2,\quad\quad\quad\quad\taup^6\taum|\sigma|^2|\phi|^6\wgamma\lesssim\epsilon^6\taum^{-2}\wgamma^{-1}|\frac{\phi}{r}|^2,\\
&\taup^4\taum^3|\LL_\Omega\rho|^2|\phi|^6\wgamma\lesssim\taup^{-2}\wgamma^{-2}|\LL_\Omega\rho|^2,\hspace{0.9cm}\taup^4\taum^3|\rhobar|^2|\phi|^6\wgamma\lesssim\taup^{-2}\taum\wgamma^{-1}|\frac{\phi}{r}|^2.
\end{align*}}
{Finally note that the term with $|F|_{L^\infty[0,T]}$ in the statement of Lemma \ref{fractional Morawetz lemma} can be absorbed in the left hand side by our bootstrap assumptions. This completes the error analysis for the terms coming from the scalar field. It remains to estimate the error terms from the Morawetz estimate for $F.$ Except for the terms involving $\rho$ these terms are treated the same way inside and outside of $V_T$ because the other components of $F$ have the same decay. We have therefore chosen to postpone this analysis to the next section. These are equations (\ref{F errors 1}-\ref{F errors 4}) and (\ref{F errors 5}) below. Here we only provide alternative estimates for the terms involving $\rho$ in these equations.
\begin{align*}
&\|\taup^2\taum|\phi|^2|\rhobar|^2\|_{L^1(C_u)}\lesssim\|\taup^{-2}\taum|\phi|^{2}\|_{L^{1}(C_{u})}\lesssim \|\taum^{2}|\frac{\phi}{r}|^{2}w_{\gamma}\|_{L^{1}(C_{u})}\taum^{-1-2\gamma}\lesssim\epsilon^{4}\taum^{-1-2\gamma},\\
&\|\taum^{2}|\rhobar||\left(D_{\Lbar}-\frac{1}{r}\right)\phi||\phi|\|_{L^{1}({\Cbar}_{\ubar})}\lesssim\|\taum^{2}\taup^{-2}|\left(D_{\Lbar}-\frac{1}{r}\right)\phi||\phi|\|_{L^{1}({\Cbar}_{\ubar})}\\
&\quad\quad\quad\quad\quad\quad\quad\lesssim\||\left(D_{\Lbar}-\frac{1}{r}\right)\phi|\taum|\frac{\phi}{r}|\taup w_{\gamma}\|_{L^{1}({\Cbar}_{\ubar})}\taup^{-1-2\gamma}\lesssim\epsilon^{3}\taum^{-1-2\gamma},\\
&\|\taup^2|\rhobar||\left(D_{L}+\frac{1}{r}\right)\phi||\phi|\|_{L^{1}(C_{u})}\lesssim\||\left(D_{L}+\frac{1}{r}\right)\phi||\phi|\|_{L^{1}(C_{u})}\\
&\quad\quad\quad\quad\quad\quad\quad\lesssim\|\taup|\left(D_{L}+\frac{1}{r}\right)\phi||\frac{\taum\phi}{\taup}|w_{\gamma}\|_{L^{1}(C_{u})}\taum^{-1-2\gamma}\lesssim\epsilon^{3}\taum^{-1-2\gamma},\\
&\|\taup\taum^2|\rhobar||\alphabar||\phi|^2\|_{L^1({\Cbar}_{\ubar})}\lesssim\epsilon^2\|\taup^{-3/2}|\alphabar|\taum|\frac{\phi}{r}|\taup\|_{L^1({\Cbar}_{\ubar})}\lesssim\epsilon^4\taup^{-3/2},\\
&\|\taup^3|\rhobar||\phi|^{2}|\alpha|\|_{L^{1}(C_{u})}\lesssim\epsilon^2\|\taum^{-1/2}|\phi||\alpha|\|_{L^{1}(C_{u})}\lesssim\epsilon^2\|\taum\frac{|\phi|}{r}\taup|\alpha|w_{\gamma}^{1/2}\|_{L^{1}(C_{u})}\taum^{-3/2-\gamma}\lesssim\epsilon^{4}\taum^{-3/2-\gamma}.
\end{align*}}
\end{proof}
\section{Inside $V_T$}\label{Inside V_T}
{In this section we complete the proof of Theorem \ref{main theorem}. In view of the results from the last section we only need to prove the decay rates in the interior of $C_{-1}.$}
\begin{proof}[Proof of Theorem \ref{main theorem} inside $V_{T}$]
{We set up a continuity argument as follows. Assume that $Q_k^{*}(T)\leq C\epsilon.$ We then show that if $\epsilon$ is sufficiently small, then $Q_k^{*}(T)\leq C\epsilon/2.$ The proof is divided into three steps corresponding to commuting zero, one, or more vector fields.}
\subsection*{Step 1: $Q_0^{*}(T)\leq {C\epsilon/2}$}
The bound on $Q_0^{*}(T)$ simply follows from the flux estimates on $C_{-1}$ from the previous section and Corollary \ref{div theorem}, and by taking the initial data sufficiently small.
\subsection*{Step 2: $Q_1^{*}(T)\leq {C\epsilon/2}$}
{In light of  (\ref{[L,equations]}), the flux estimates on $C_{-1}$, and Corollary \ref{div theorem} we have
\begin{align}{}
Q_1^{*}(T)^2&\leq C\epsilon_0^2\nonumber\\
&+C\sum_{\Gamma}\int_{V_T}|\frac{1}{\Omega}D_{\overline{K}_{0}}(\Omega\LL_{\Gamma}\phi)||2\Gamma^\nu F_{\mu\nu} D^\mu\phi+\nabla^\mu\Gamma^\nu F_{\mu\nu}\phi|\label{one}\\
&+C\sum_\Gamma\int_{V_T}|\frac{1}{\Omega}D_{\overline{K}_{0}}(\Omega\LL_{\Gamma}\phi)||\Gamma^\mu J_\mu(\phi)||\phi|\label{two}\\
&+C\sum_\Gamma\int_{V_T}|\Kbar^\mu F_{\mu\nu} J^\nu(\LL_\Gamma\phi)|\label{three}\\
&+C\sum_{\Gamma}\int_{V_T}|\Kbar^\mu\LL_\Gamma F_{\mu\nu}\LL_\Gamma J^\nu(\phi)|\label{four}\\
&=:C\epsilon_0^2+\sum_\Gamma\sum_{i=1}^4
\EE_i^1(\Gamma).\nonumber&&
\end{align}}
 For each $i$ and $\Gamma$ we will prove the bound
\[\EE_i^1(\Gamma)\leq C \epsilon^3 .\]
\subsubsection*{$\EE_1^1(S)$} 
{We have
\Align{\label{LKbar LS psi}
|\frac{1}{r}D_{\Kbar}(r\LL_S\phi)|\lesssim\taup^2|(D_L+\frac{1}{r})\LL_S\phi|+\taum^2|(D_{\Lbar}-\frac{1}{r})\LL_S\phi|.
}}
Now we need to bound the integrals of the products of the terms on the right hand side of the previous expression with \eqref{outside SP error S}.
\begin{align*}\tag{\ref{one}.1}
\int_{V_T}\taup^3|(D_L+\frac{1}{r})\LL_S\phi|&|\rho||D_L\phi|\\
&\leq\int_{-1}^\infty\|\taup|(D_L+\frac{1}{r})\LL_S\phi|\|_{L^2(C_u)}\|\taup|(D_L+\frac{1}{r})\phi|\|_{L^2(C_u)}\|\taup|\rho|\|_{L^\infty(C_u)}du\\
&~~+\int_{-1}^\infty\|\taup|(D_L+\frac{1}{r})\LL_S\phi|\|_{L^2(C_u)}\|\taum\frac{|\phi|}{r}\|_{L^2(C_u)}\|\taum^{-1}\taup^2|\rho|\|_{L^\infty(C_u)}du\\
&\lesssim \epsilon^3\int_{-1}^\infty\taum^{-3/2}du\lesssim\epsilon^3.
\end{align*}
\begin{align*}\tag{\ref{one}.2}
\int_{V_T}\taup^3&|(D_L+\frac{1}{r})\LL_S\phi||\alpha||\sD\phi|\\
&\leq\int_{-1}^\infty\|\taup|(D_L+\frac{1}{r})\LL_S\phi|\|_{L^2(C_u)}\|\taum|\sD\phi|\|_{L^2(C_u)}\|\taum^{-1}\taup^2|\alpha|\|_{L^\infty(C_u)}du\lesssim\epsilon^3.
\end{align*}
\begin{align*}\tag{\ref{one}.3}
\int_{V_T}\taup^2\taum&|(D_L+\frac{1}{r})\LL_S\phi||\alphabar||\sD\phi|\\
&\leq\int_{-1}^\infty\|\taup|(D_L+\frac{1}{r})\LL_S\phi|\|_{L^2(C_u)}\|\taum|\sD\phi|\|_{L^2(C_u)}\|\taup|\alphabar|\|_{L^\infty(C_u)}du\lesssim\epsilon^3.
\end{align*}
\begin{align*}\tag{\ref{one}.4}
\int_{V_T}\taup^2\taum&|(D_L+\frac{1}{r})\LL_S\phi||\rho||D_{\Lbar}\phi|\\
&\leq\int_{-1}^\infty\|\taup|(D_L+\frac{1}{r})\LL_S\phi|\|_{L^2(C_u)}\|\taum|\rho|\|_{L^2(C_u)}\|\taup|D_{\Lbar}\phi|\|_{L^\infty(C_u)}du\lesssim\epsilon^3,
\end{align*}
\begin{align*}\tag{\ref{one}.5}
\int_{V_T}\taum^{2}\taup&|(D_{\Lbar}-\frac{1}{r})\LL_S\phi||\rho||D_L\phi|\\
&\leq\int_{0}^\infty\|\taum|(D_{\Lbar}-\frac{1}{r})\LL_S\phi|\|_{L^2({\Cbar}_{\ubar})}\|\taup|\rho|\|_{L^2({\Cbar}_{\ubar})}\|\taum|D_L\phi|\|_{L^\infty({\Cbar}_{\ubar})}d\ubar\lesssim\epsilon^3.
\end{align*}
\begin{align*}\tag{\ref{one}.6}
\int_{V_T}\taum^2\taup&|(D_{\Lbar}-\frac{1}{r})\LL_S\phi||\alpha||\sD\phi|\\
&\leq\int_{0}^\infty\|\taum|(D_{\Lbar}-\frac{1}{r})\LL_S\phi|\|_{L^2({\Cbar}_{\ubar})}\|\taup|\sD\phi|\|_{L^2({\Cbar}_{\ubar})}\|\taum|\alpha|\|_{L^\infty({\Cbar}_{\ubar})}d\ubar\lesssim\epsilon^3.
\end{align*}
\begin{align*}\tag{\ref{one}.7}
\int_{V_T}\taum^3&|(D_{\Lbar}-\frac{1}{r})\LL_S\phi||\alphabar||\sD\phi|\\
&\leq\int_{0}^\infty\|\taum|(D_{\Lbar}-\frac{1}{r})\LL_S\phi|\|_{L^2({\Cbar}_{\ubar})}\|\taup|\sD\phi|\|_{L^2({\Cbar}_{\ubar})}\|\taup^{-1}\taum^2|\alphabar|\|_{L^\infty({\Cbar}_{\ubar})}d\ubar\lesssim\epsilon^3.
\end{align*}
\begin{align*}\tag{\ref{one}.8}
\int_{V_T}\taum^3&|(D_{\Lbar}-\frac{1}{r})\LL_S\phi||\rho||D_{\Lbar}\phi|\\
&\leq\int_{0}^{\infty}\|\taum(D_{\Lbar}-\frac{1}{r})\LL_{S}\phi\|_{L^{2}(\Cbar_{\ubar})}\|\taup\rho\|_{L^{2}(\Cbar_{\ubar})}\|\taup^{-1}\taum^{2}D_{\Lbar}\phi\|_{L^{\infty}}d\ubar\lesssim\epsilon^{3}.
\end{align*}
\subsubsection*{$\EE_1^1(\Omega_{ij})$} As before, we can bound
\Align{\label{LKbar LOmegaij psi}
|\frac{1}{r}D_{\Kbar}(r\LL_{\Omega_{ij}}\phi)|\lesssim\taup^2|(D_L+\frac{1}{r})\LL_{\Omega_{ij}}\phi|+\taum^2|(D_{\Lbar}-\frac{1}{r})\LL_{\Omega_{ij}}\phi|.
}
Now we have to estimate the integral of the product of this right hand side with \eqref{outside SP error rotation}.
{\begin{align*}\tag{\ref{one}.9}
\int_{V_T}\taup^3&|(D_L+\frac{1}{r})\LL_{\Omega_{ij}}\phi||\alpha||(D_{\Lbar}-\frac{1}{r})\phi|\\
&\leq\int_{-1}^\infty\|\taup|(D_L+\frac{1}{r})\LL_{\Omega_{ij}}\phi|\|_{L^2(C_u)}\|\taup|\alpha|\|_{L^2(C_u)}\|\taup(D_{\Lbar}-\frac{1}{r})\phi|\|_{L^\infty}du\lesssim\epsilon^3.
\end{align*}}
\begin{align*}\tag{\ref{one}.10}
\int_{V_T}\taup^3&|(D_L+\frac{1}{r})\LL_{\Omega_{ij}}\phi||\alphabar||(D_{L}+\frac{1}{r})\phi|\\
&\leq\int_{-1}^\infty\|\taup|(D_L+\frac{1}{r})\LL_{\Omega_{ij}}\phi|\|_{L^2(C_u)}\|\taup|(D_L+\frac{1}{r})\phi|\|_{L^2(C_u)}\|\taup|\alphabar|\|_{L^\infty(C_u)}du\lesssim\epsilon^3.
\end{align*}
\begin{align*}\tag{\ref{one}.11}
\int_{V_T}\taup^3&|(D_L+\frac{1}{r})\LL_{\Omega_{ij}}\phi||\sigma|(|\sD\phi|+\frac{|\phi|}{r})\\
&\leq\int_{-1}^\infty\|\taup|(D_L+\frac{1}{r})\LL_{\Omega_{ij}}\phi|\|_{L^2(C_u)}\|\taum(|\sD\phi|+\frac{|\phi|}{r})\|_{L^2(C_u)}\|\taum^{-1}\taup^2|\sigma|\|_{L^\infty(C_u)}du\lesssim\epsilon^3.
\end{align*}
\begin{align*}\tag{\ref{one}.12}
\int_{V_T}\taum^2\taup&|(D_{\Lbar}-\frac{1}{r})\LL_{\Omega_{ij}}\phi||\alpha||(D_{\Lbar}-\frac{1}{r})\phi|\\
&\leq\int_{0}^\infty\|\taum|(D_{\Lbar}-\frac{1}{r})\LL_{\Omega_{ij}}\phi|\|_{L^2({\Cbar}_{\ubar})}\|\taum|(D_{\Lbar}-\frac{1}{r})\phi|\|_{L^2({\Cbar}_{\ubar})}\|\taup|\alpha|\|_{L^\infty({\Cbar}_{\ubar})}d\ubar\lesssim\epsilon^3.
\end{align*}
\begin{align*}\tag{\ref{one}.13}
\int_{V_T}\taum^2\taup&|(D_{\Lbar}-\frac{1}{r})\LL_{\Omega_{ij}}\phi||\alphabar||(D_{L}+\frac{1}{r})\phi|\\
&\leq\int_{0}^\infty\|\taum|(D_{\Lbar}-\frac{1}{r})\LL_{\Omega_{ij}}\phi|\|_{L^2({\Cbar}^i_{\ubar})}\|\taum|\alphabar|\|_{L^2({\Cbar}^i_{\ubar})}\|\taup|(D_L+\frac{1}{r})\phi|\|_{L^\infty({\Cbar}^i_{\ubar})}d\ubar\lesssim\epsilon^3.
\end{align*}
\begin{align*}\tag{\ref{one}.14}
\int_{V_T}\taum^2\taup&|(D_{\Lbar}-\frac{1}{r})\LL_{\Omega_{ij}}\phi||\sigma|(|\sD\phi|+\frac{|\phi|}{r})\\
&\leq\int_{0}^\infty\|\taum|(D_{\Lbar}-\frac{1}{r})\LL_{\Omega_{ij}}\phi|\|_{L^2({\Cbar}_{\ubar})}\|\taup(|\sD\phi|+\frac{|\phi|}{r})\|_{L^2({\Cbar}_{\ubar})}\|\taum|\sigma|\|_{L^\infty({\Cbar}_{\ubar})}d\ubar\lesssim\epsilon^3.
\end{align*}
As in the previous section (see \eqref{outside SP error boots}), the other vectorfields do not contribute new terms.
\subsubsection*{$\EE_2^1(S)$} Observe that
\Aligns{
S^\mu J_\mu(\phi)&=S^L\Im(\phibar D_L\phi)+S^{\Lbar}\Im(\phibar D_{\Lbar}\phi)\\
&=\frac{1}{2}\ubar \Im(\phibar(D_L+\frac{1}{r})\phi)+\frac{1}{2}u\Im(\phibar(D_{\Lbar}-\frac{1}{r})\phi).
}
We must consider the product of this expression with (\ref{LKbar LS psi}).
\begin{align*}\tag{\ref{two}.1}
\int_{V_T}\taup^3&|(D_L+\frac{1}{r})\LL_{S}\phi||\phi|^2|(D_{L}+\frac{1}{r})\phi|\\
&\leq\int_{-1}^\infty\|\taup|(D_L+\frac{1}{r})\LL_{S}\phi|\|_{L^2(C_u)}\|\taup|(D_L+\frac{1}{r})\phi|\|_{L^2(C_u)}\|\taup|\phi|^2\|_{L^\infty(C_u)}du\lesssim\epsilon^3.
\end{align*}
\begin{align*}\tag{\ref{two}.2}
\int_{V_T}\taup^2\taum&|(D_L+\frac{1}{r})\LL_{S}\phi||\phi|^2|\left(D_{\Lbar}-\frac{1}{r}\right)\phi|\\
&\leq\int_{-1}^\infty\|\taup|(D_L+\frac{1}{r})\LL_{S}\phi|\|_{L^2(C_u)}\|\frac{\taum}{r}|\phi|\|_{L^2(C_u)}\|\taup^2|\phi||\left(D_{\Lbar}-\frac{1}{r}\right)\phi|\|_{L^\infty(C_u)}du\lesssim\epsilon^3.
\end{align*}
\begin{align*}\tag{\ref{two}.3}
\int_{V_T}\taum^2\taup&|(D_{\Lbar}-\frac{1}{r})\LL_{S}\phi||\phi|^2|(D_{L}+\frac{1}{r})\phi|\\
&\leq\int_{0}^\infty\|\taum|(D_{\Lbar}-\frac{1}{r})\LL_{S}\phi|\|_{L^2({\Cbar}_{\ubar})}\|\frac{\taup}{r}|\phi|\|_{L^2({\Cbar}_{\ubar})}\|\taum\taup|\phi||(D_L+\frac{1}{r})\phi|\|_{L^\infty({\Cbar}_{\ubar})}d\ubar\lesssim\epsilon^3.
\end{align*}
\begin{align*}\tag{\ref{two}.4}
\int_{V_T}\taum^3&|(D_{\Lbar}-\frac{1}{r})\LL_{S}\phi||\phi|^2|(D_{\Lbar}-\frac{1}{r})\phi|\\
&\leq\int_{0}^\infty\|\taum|(D_{\Lbar}-\frac{1}{r})\LL_{S}\phi|\|_{L^2({\Cbar}_{\ubar})}\|\taum|(D_{\Lbar}-\frac{1}{r})\phi|\|_{L^2({\Cbar}_{\ubar})}\|\taum|\phi|^2|\|_{L^\infty({\Cbar}_{\ubar})}d\ubar\lesssim\epsilon^3.
\end{align*}
\subsubsection*{$\EE_2^1(\Omega_{ij})$} Since $\Omega_{ij}^\mu J_\mu(\phi)=\Omega_{ij}^A\Im(\phibar D_A\phi),$
\Aligns{
|\Omega_{ij}^\mu J_\mu(\phi)||\phi|\lesssim \taup|\phi|^2|\sD\phi|.
}
We need to consider the product of this expression with (\ref{LKbar LOmegaij psi}).
\begin{align*}\tag{\ref{two}.5}
\int_{V_T}\taup^3&|(D_L+\frac{1}{r})\LL_{\Omega_{ij}}\phi||\phi|^2|\sD\phi|\\
&\leq\int_{-1}^\infty\|\taup|(D_L+\frac{1}{r})\LL_{\Omega_{ij}}\phi|\|_{L^2(C_u)}\|\taum|\sD\phi||\|_{L^2(C_u)}\|\taup^2\taum^{-1}|\phi|^2\|_{L^\infty(C_u)}du\lesssim\epsilon^3.\\
\end{align*}
\begin{align*}\tag{\ref{two}.6}
\int_{V_T}\taum^2\taup&|(D_{\Lbar}-\frac{1}{r})\LL_{\Omega_{ij}}\phi||\phi|^2|\sD\phi|\\
&\leq\int_{0}^\infty\|\taum|(D_{\Lbar}-\frac{1}{r})\LL_{\Omega_{ij}}\phi|\|_{L^2({\Cbar}_{\ubar})}\|\taup|\sD\phi|\|_{L^2({\Cbar}_{\ubar})}\|\taum|\phi|^2\|_{L^\infty({\Cbar}_{\ubar})}d\ubar\lesssim\epsilon^3.\\
\end{align*}
\subsubsection*{$\EE_2^1(\Omega_{0i})$} Here
{\Aligns{
\Omega_{0i}^\mu J_\mu(\phi)&=\frac{\omega_i}{2}\ubar\Im(\phibar D_L\phi)-\frac{\omega_i}{2}u\Im(\phibar D_{\Lbar}\phi)+t\omega_i^A \Im(\overline{\phi}\sD_A\phi)\\
&=\frac{\omega_i}{2}\ubar\Im(\phibar(D_L+\frac{1}{r})\phi)-\frac{\omega_i}{2}u\Im(\phibar(D_{\Lbar}-\frac{1}{r})\phi)+t\omega_i^A \Im(\overline{\phi}\sD_A\phi),
}}
\Aligns{
\Omega_{0i}^\mu J_\mu(\phi)&=\frac{\omega_i}{2}\ubar\Im(\phibar D_L\phi)-\frac{\omega_i}{2}u\Im(\phibar D_{\Lbar}\phi)+t\omega_i^A\sD_A\phi\\
&=\frac{\omega_i}{w}\ubar\Im(\phibar(D_L+\frac{1}{r})\phi)-\frac{\omega_i}{2}u\Im(\phibar(D_{\Lbar}-\frac{1}{r})\phi)+t\omega_i^A\sD_A\phi.
}
But the contributions of all these terms were already considered while estimating $\EE_2^1(S)$ and $\EE_2^1(\Omega_{ij}).$ {When $\Gamma=\partial_{\mu}$, by \eqref{vf decompositions} 
\begin{align*}
|\Gamma^{\mu}J_{\mu}(\phi)|\lesssim |(D_{L}+\frac{1}{r})\phi||\phi|+|(D_{\Lbar}-\frac{1}{r})\phi||\phi|+|\phi||\sD\phi|,
\end{align*}
which has been already considered.}
{\subsection*{$\EE_{3}^{1}(\Gamma)$}
We have
\begin{align}\label{KbarF3}
\overline{K}^{\nu}_{0}F_{\nu\mu}=\frac{1}{2}\taup^{2}F_{L\mu}+\frac{1}{2}\taum^{2}F_{\Lbar\mu}.
\end{align}
For any commutator $\Gamma$ we see that
\begin{align*}
|\overline{K}_{0}^{\nu}F_{\nu\mu}J^{\mu}(\LL_{\Gamma}\phi)|\lesssim \Big(\taup^{2}|(D_{L}+\frac{1}{r})\LL_{\Gamma}\phi|+\taum^{2}|(D_{\Lbar}-\frac{1}{r})\LL_{\Gamma}\phi|\Big)|\rho||\LL_{\Gamma}\phi|\\
+\big(\taup^{2}|\alpha|+\taum^{2}|\alphabar|\big)|\sD\LL_{\Gamma}\phi||\LL_{\Gamma}\phi|.
\end{align*}
Bounding the terms involving $F$ in $L^{\infty}$, we have the following estimates
\begin{align*}
\|\taup^{2}|(D_{L}+\frac{1}{r})\LL_{\Gamma}\phi||\LL_{\Gamma}\phi||\rho|\|_{L^{1}(C_{u})}&\lesssim \|\taup(D_{L}+\frac{1}{r})\LL_{\Gamma}\phi\|_{L^{2}(C_{u})}\|\frac{\taum}{r}\LL_{\Gamma}\phi\|_{L^{2}(C_{u})}\\
&\cdot\|\taup^{2}\taum^{-1}\rho\|_{L^{\infty}}\lesssim{\taum^{-3/2}},
\end{align*}
\begin{align*}
&\|\taum^{2}|(D_{\Lbar}-\frac{1}{r})\LL_{\Gamma}\phi||\LL_{\Gamma}\phi||\rho|\|_{L^{1}(C_{u})}\lesssim\|\taum(D_{\Lbar}-\frac{1}{r})\LL_{\Gamma}\phi\|_{L^{2}(\Cbar_{\ubar})}\|\frac{\taup}{r}\LL_{\Gamma}\phi\|_{L^{2}(\Cbar_{\ubar})}\\
&\quad\quad\quad\quad\quad\cdot\|\taum\rho\|_{L^{\infty}}\lesssim\taup^{-3/2},\\
&\|\taup^{2}|\alpha||\LL_{\Gamma}\phi||\sD\LL_{\Gamma}\phi|\|_{L^{1}(\Cbar_{\ubar})}\lesssim\|\taup|\sD\LL_{\Gamma}\phi|\|_{L^{2}(\Cbar_{\ubar})}\|\frac{\taup}{r}\LL_{\Gamma}\phi\|_{L^{2}(\Cbar_{\ubar})}\|\taup|\alpha|\|_{L^{\infty}}\lesssim \taup^{-3/2},\\
&\|\taum^{2}|\alphabar||\LL_{\Gamma}\phi||\sD\LL_{\Gamma}\phi|\|_{L^{1}(C_{u})}\lesssim \|\taum|\sD\LL_{\Gamma}\phi|\|_{L^{2}(C_{u})}\|\frac{\taum}{r}\LL_{\Gamma}\phi\|_{L^{2}(C_{u})}\|\taup|\alphabar|\|_{L^{\infty}}\lesssim\taum^{-3/2}.
\end{align*}
}
\subsubsection*{$\EE_4^1(\Gamma)$} Suppose {$\Gamma$ is in $\mathbb{L}$ with conformal factor $\Omega_\Gamma.$ Then
\Align{\label{LGamma J}
\LL_\Gamma J^\mu(\phi)&=\Gamma^\alpha\nabla_\alpha\Im(\con{\phi}D^\mu\phi)+\nabla^\mu\Gamma^\alpha\Im(\con{\phi}D_\alpha\phi)+\Omega_\Gamma\Im(\phibar D^\mu\phi)\\
&=\Im(\overline{D_\Gamma\phi}D^\mu\phi+\phibar D_\Gamma D^\mu\phi)+\nabla^\mu\Gamma^\alpha\Im(\con{\phi}D_\alpha\phi)+\Omega_\Gamma\Im(\phibar D^\mu\phi)\\
&=\Im(\con{\LL_\Gamma\phi}D^\mu\phi)+\Im(\phibar\LL_\Gamma D^\mu\phi)\\
&=\Im(\con{\LL_\Gamma\phi}D^\mu\phi)+\Im(\phibar D^\mu\LL_\Gamma\phi)+\Gamma^\alpha F^\mu_{~\alpha}|\phi|^2,
}
where we have used (\ref{D-LL commutator}) to pass to the last equality. It follows that
\Align{\label{LGamma J bound}
|\LL_{\Gamma}J^\mu(\phi)|\lesssim |\Im(\overline{\LL_{\Gamma}\phi}D^\mu\phi)
+\Im(\phibar D^\mu\LL_\Gamma\phi)|+|\phi|^2
|\Gamma^\nu F_{\nu}^{~\mu}|.
}
We also have
\Align{\label{Kbar F}
\Kbar^\nu (\LL_{S}F)_{\nu\mu} = \frac{1}{2}\taup^2 (\LL_{S}F)_{L\mu}+\frac{1}{2}\taum^2(\LL_{S}F)_{\Lbar\mu}.
}
Now if $\Gamma=S$
\begin{align*}
|\Kbar^\mu (\LL_{S}F)_{\mu\nu}\LL_SJ^\nu(\phi)|\lesssim&\left(\taup^{2}|\alpha(\LL_{S}F)|+\taum^{2}|\alphabar(\LL_{S}F)|\right)\left(|\sD\phi||\LL_{S}\phi|+|\sD\LL_{S}\phi||\phi|+|\phi|^{2}(\taup|\alpha|+\taum|\alphabar|)\right)\\
&+\taum^{2}\rho(\LL_{S}F)\left(|(D_{\Lbar}-\frac{1}{r})\phi||\LL_{S}\phi|+|(D_{\Lbar}-\frac{1}{r})\LL_{S}\phi||\phi|+\taup|\rho||\phi|^{2}\right)\\
&+\taup^{2}\rho(\LL_{S}F)\left(|(D_{L}+\frac{1}{r})\phi||\LL_{S}\phi|+|(D_{L}+\frac{1}{r})\LL_{S}\phi||\phi|+\taum|\rho||\phi|^{2}\right).
\end{align*}
The terms involving $(D_{L}+\frac{1}{r})\phi$ and $(D_{\Lbar}-\frac{1}{r})\phi$ in the last two lines come from
\begin{align*}
\textrm{Im}(\overline{\LL_{S}\phi}D_{L}\phi)+\textrm{Im}(\overline{\phi}D_{L}\LL_{S}\phi)=\textrm{Im}(\overline{\LL_{S}\phi}(D_{L}+\frac{1}{r})\phi)+\textrm{Im}(\overline{\phi}(D_{L}+\frac{1}{r})\LL_{S}\phi),\\
\textrm{Im}(\overline{\LL_{S}\phi}D_{\Lbar}\phi)+\textrm{Im}(\overline{\phi}D_{\Lbar}\LL_{S}\phi)=\textrm{Im}(\overline{\LL_{S}\phi}(D_{\Lbar}-\frac{1}{r})\phi)+\textrm{Im}(\overline{\phi}(D_{\Lbar}-\frac{1}{r})\LL_{S}\phi).
\end{align*}
The terms involving $|\phi|^{2}$ can be bounded as follows.
\Align{\label{F errors 1}
&\|\taup^3|\phi|^2|\alpha||\alpha(\LL_{S}F)|\|_{L^1(C_u)}\leq\|\taup|\alpha|\|_{L^2(C_u)}\|\taup|\alpha(\LL_{S}F)|\|_{L^2(C_u)}\|\taup|\phi|^2\|_{L^\infty(C_u)}\lesssim\taum^{-2},\\
&\|\taum^3|\phi|^2|\alphabar||\alphabar(\LL_{S}F)|\|_{L^1({\Cbar}_{\ubar})}\leq\|\taum|\alphabar|\|_{L^2({\Cbar}_{\ubar})}\|\taum|\alphabar(\LL_{S}F)|\|_{L^2({\Cbar}_{\ubar})}\|\taum|\phi|^2\|_{L^\infty({\Cbar}_{\ubar})}\lesssim\taup^{-2},\\
&\|\taup^2\taum|\phi|^2|\alpha(\LL_{S}F)||\alphabar|\|_{L^1(C_u)}\leq\|\taup|\alpha(\LL_{S}F)|\|_{L^2(C_u)}\|\frac{\taum}{r}|\phi|\|_{L^2(C_u)}\|\taup|\phi|\|_{L^\infty}\|\taup|\alphabar|\|_{L^\infty}\lesssim\taum^{-2},\\
&\|\taup\taum^2|\phi|^2|\alphabar(\LL_{S}F)||\alpha|\|_{L^1(\Cbar_{\ubar})}\leq\|\taum|\alphabar(\LL_{S}F)|\|_{L^2(\Cbar_{\ubar})}\|\frac{\taup}{r}|\phi|\|_{L^2(\Cbar_{\ubar})}\|\taum|\phi|\|_{L^\infty}\|\taup|\alpha|\|_{L^\infty}\lesssim\taup^{-2},\\
&\|\taup^2\taum|\phi|^2|\rho(\LL_{S}F)||\rho|\|_{L^1({\Cbar}_{\ubar})}\leq\|\taup|\rho(\LL_{S}F)|\|_{L^2({\Cbar}_{\ubar})}\|\taup|\rho|\|_{L^2({\Cbar}_{\ubar})}\|\taum|\phi|^2\|_{L^\infty}\lesssim\taup^{-2},\\
&\|\taup\taum^2|\phi|^2|\rho(\LL_{S}F)||\rho|\|_{L^1({\Cbar}_{\ubar})}\leq\|\taum|\rho(\LL_{S}F)|\|_{L^2({C}_{u})}\|\taum|\rho|\|_{L^2({C}_{u})}\|\taup|\phi|^2\|_{L^\infty}\lesssim\taum^{-2}.
}
Next we investigate the terms involving $|\sD\phi||\LL_{S}\phi|$ and $|\phi||\sD\LL_{S}\phi|$, which can be bounded as
\Align{\label{F errors 2}
&\|\taup^{2}|\alpha(\LL_{S}F)||\LL_{S}\phi||\sD\phi|\|_{L^{1}(C_{u})}\lesssim\|\taup|\alpha(\LL_{S}F)|\|_{L^{2}(C_{u})}\|\frac{\taum}{r}|\LL_{S}\phi|\|_{L^{2}(C_{u})}\|\taup^{2}\taum^{-1}\sD\phi\|_{L^{\infty}}\lesssim\taum^{-3/2},\\
&\||\taup^{2}|\alpha(\LL_{S}F)||\phi||\sD\LL_{S}\phi|\|_{L^{1}(C_{u})}\lesssim\|\taup|\alpha(\LL_{S}F)|\|_{L^{2}(C_{u})}\|\taum|\sD\phi|\|_{L^{2}(C_{u})}\|\taup\taum^{-1}|\phi|\|_{L^{\infty}}\lesssim\taum^{-3/2},\\
&\|\taum^{2}|\alphabar(\LL_{S}F)||\LL_{S}\phi||\sD\phi|\|_{L^{1}(\Cbar_{\ubar})}\lesssim\|\taum|\alphabar|\|_{L^{2}(\Cbar_{\ubar})}\|\frac{\taup}{r}|\LL_{S}\phi|\|_{L^{2}(C_{u})}\|\taum|\sD\phi|\|_{L^{\infty}}\lesssim\taup^{-3/2},\\
&\|\taum^{2}|\alphabar(\LL_{S}F)||\phi||\sD\LL_{S}\phi|\|_{L^{1}(\Cbar_{\ubar})}\lesssim\|\taum|\alphabar|\|_{L^{2}(\Cbar_{\ubar})}\|\taup|\sD\LL_{S}\phi|\|_{L^{2}(\Cbar_{\ubar})}\|\taup^{-1}\taum|\phi|\|_{L^{\infty}}\lesssim\taup^{-3/2}.
}
For the remaining terms we have
\Align{\label{F errors 3}
&\|\taum^{2}\rho(\LL_{S}F)|(D_{\Lbar}-\frac{1}{r})\phi||\LL_{S}\phi|\|_{L^{1}(C_{u})}\lesssim\|\taum\rho(\LL_{S}F)\|_{L^{2}(C_{u})}\|\frac{\taum}{r}|\LL_{S}\phi|\|_{L^{2}(C_{u})}\\
&\quad\quad\quad\quad\quad\quad\quad\quad\quad\quad\quad\quad\quad\quad\quad\quad\quad\quad\cdot\|\taup(D_{\Lbar}-\frac{1}{r})\phi\|_{L^{\infty}}\lesssim\taum^{-3/2},\\
&\|\taum^{2}\rho(\LL_{S}F)|(D_{\Lbar}-\frac{1}{r})\LL_{S}\phi||\phi|\|_{L^{1}(\Cbar_{\ubar})}\lesssim\|\taup\rho(\LL_{S}F)\|_{L^{2}(\Cbar_{\ubar})}\|\taum(D_{\Lbar}-\frac{1}{r})\LL_{S}\phi\|_{L^{2}(\Cbar_{\ubar})}\\
&\quad\quad\quad\quad\quad\quad\quad\quad\quad\quad\quad\quad\quad\quad\quad\quad\quad\quad\cdot\|\taup^{-1}\taum|\phi|\|_{L^{\infty}}\lesssim\taup^{-3/2},\\
&\|\taup^{2}|\rho(\LL_{S}F)||\LL_{S}\phi||(D_{L}+\frac{1}{r})\phi|\|_{L^{1}(\Cbar_{\ubar})}\lesssim\|\taup\rho(\LL_{S}F)\|_{L^{2}(\Cbar_{\ubar})}\|\frac{\taup}{r}|\LL_{S}\phi|\|_{L^{2}(\Cbar_{\ubar})}\\
&\quad\quad\quad\quad\quad\quad\quad\quad\quad\quad\quad\quad\quad\quad\quad\quad\quad\quad\cdot\|\taup(D_{L}+\frac{1}{r})\phi\|_{L^{\infty}}\lesssim\taup^{-3/2},\\
&\|\taup^{2}|\rho(\LL_{S}F)||\phi||(D_{L}+\frac{1}{r})\LL_{S}\phi|\|_{L^{1}(C_{u})}\lesssim\|\taum\rho(\LL_{S}F)\|_{L^{2}(C_{u})}\|\taup(D_{L}+\frac{1}{r})\LL_{S}\phi\|_{L^{2}(C_{u})}\\
&\quad\quad\quad\quad\quad\quad\quad\quad\quad\quad\quad\quad\quad\quad\quad\quad\quad\quad\cdot\|\taup\taum^{-1}\phi\|_{L^{\infty}}\lesssim\taum^{-3/2}.
}
Next if  $\Gamma=\Omega_{ij}$,
\begin{align*}
|\overline{K}_{0}^{\mu}(\LL_{\Omega_{ij}}F)_{\mu\nu}\LL_{\Omega_{ij}}J^{\mu}(\phi)|\lesssim&\left(\taup^{2}|\alpha(\LL_{\Omega_{ij}}F)|+\taum^{2}|\alphabar(\LL_{\Omega_{ij}}F)|\right)\left(|\sD\phi||\LL_{\Omega_{ij}}\phi|+|\sD\LL_{\Omega_{ij}}\phi||\phi|+\taup|\sigma||\phi|^{2}\right)\\
&+\taum^{2}\rho(\LL_{\Omega_{ij}}F)\left(|(D_{\Lbar}-\frac{1}{r})\phi||\LL_{\Omega_{ij}}\phi|+|(D_{\Lbar}-\frac{1}{r})\LL_{\Omega_{ij}}\phi||\phi|+\taup|\alphabar||\phi|^{2}\right)\\
&+\taup^{2}\rho(\LL_{\Omega_{ij}}F)\left(|(D_{L}+\frac{1}{r})\phi||\LL_{\Omega_{ij}}\phi|+|(D_{L}+\frac{1}{r})\LL_{\Omega_{ij}}\phi||\phi|+\taup|\alpha||\phi|^{2}\right).
\end{align*}
Here we only need to consider the terms involving $|\phi|^{2}$, which are different from the case when $\Gamma=S$.
\Align{\label{F errors 4}
&\|\taup^{3}|\alpha(\LL_{\Omega_{ij}}F)||\sigma||\phi|^{2}\|_{L^{1}(C_{u})}\lesssim\|\taup|\alpha(\LL_{\Omega_{ij}}F)|\|_{L^{2}(C_{u})}\|\taum|\sigma|\|_{L^{2}(C_{u})}\|\taup^{2}\taum^{-1}|\phi|^{2}\|_{L^{\infty}}\lesssim\taum^{-2},\\
&\|\taum^{2}\taup|\alphabar(\LL_{\Omega_{ij}}F)||\sigma||\phi|^{2}\|_{L^{1}(\Cbar_{\ubar})}\lesssim\|\taum|\alphabar(\LL_{\Omega_{ij}}F)|\|_{L^{2}(\Cbar_{\ubar})}\|\taup|\sigma|\|_{L^{2}(\Cbar_{\ubar})}\|\taum|\phi|^{2}\|_{L^{\infty}}\lesssim\taup^{-2},\\
&\|\taum^{2}\taup\rho(\LL_{\Omega_{ij}}F)|\alphabar||\phi|^{2}\|_{L^{1}(\Cbar_{\ubar})}\lesssim\|\taup\rho(\LL_{\Omega_{ij}}F)\|_{L^{2}(\Cbar_{\ubar})}\|\taum|\alphabar|\|_{L^{2}(\Cbar_{\ubar})}\|\taum|\phi|^{2}\|_{L^{\infty}}\lesssim\taup^{-2},\\
&\|\taup^{3}\rho(\LL_{\Omega_{ij}}F)|\alpha||\phi|^{2}\|_{L^{1}(C_{u})}\lesssim\|\taum\rho(\LL_{\Omega_{ij}}F)\|_{L^{2}(C_{u})}\|\taup|\alpha|\|_{L^{2}(C_{u})}\|\taup^2\taum^{-1}|\phi|^{2}\|_{L^{\infty}}\lesssim\taum^{-2}.
}
For $\Gamma=\Omega_{0i}$, we again only need to consider the contribution from the commutator terms. As in the previous cases, since $\Omega_{0i}=\dfrac{\omega_{i}}{2}(\ubar L-u\Lbar)+t\omega_{i}^{A}e_{A},$ the $L$ and $\Lbar$-components of $\Omega_{0i}$ have the same behavior as those of $S$, while the $e_{A}$-components have the same behavior as those of $\Omega_{ij}.$ The estimates for the contribution from $\Omega_{0i}$ therefore follow from those of $\Omega_{ij}$ and $S$. Similarly by \eqref{vf decompositions}, all the contributions from $\Gamma=\partial_\mu$ have already been considered.}
\subsection*{Step 3: {$Q_k^{*}(T)\leq\frac{C\epsilon}{2}$}} {Appealing to the flux estimates on $C_{-1}$ and Corollary \ref{div theorem} we need to bound}
\Aligns{
\int_{V_T} \left( \left| \frac{1}{\Omega}D_{\Kbar}(\Omega \LL^k_{X_1\dots X_k} \phi)  D^\alpha D_\alpha \LL^k_{X_1\dots X_k} \phi \right|+ \left| \Kbar^\nu F_{\mu\nu} J^\mu(\LL^k_{X_1\dots X_k} \phi) \right| + \left| \Kbar^\nu \LL^k_{X_1\dots X_k} F_{\mu\nu} \LL^k_{X_1\dots X_k} J^\mu(\phi) \right| \right).
}
In view of Lemma \ref{kth commutator} this can be bounded by terms of the following forms:
\Aligns{
& A_1: \int_{V_T} \left|\frac{1}{\Omega}  D_{\Kbar}(\Omega \LL^k_{X_1\dots X_k} \phi) \right| \left| 2L^{|I_1|}_{X_{I_1}}X_{i_1}^\alpha \LL^{|I_2|}_{X_{I_2}}F_{\mu\alpha}D^\mu \LL^{|I_3|}_{X_{I_3}}\phi+ \nabla^\mu L^{|I_1|}_{X_{I_1}}X_{i_1}^\alpha \LL^{|I_2|}_{X_{I_2}}F_{\mu\alpha}\LL^{|I_3|}_{X_{I_3}}\phi\right|, \\&\hspace{10cm}|I_1|+|I_2|+|I_3|=k-1,~ i_1\notin I_1,\\
& A_2: \int_{V_T} \left| \frac{1}{\Omega} D_{\Kbar}(\Omega \LL^k_{X_1\dots X_k} \phi) \right| \left| L^{|I_1|}_{X_{I_1}}X_{i_1}^\alpha \nabla^\mu \LL^{|I_2|}_{X_{I_2}}F_{\mu\alpha}\LL^{|I_3|}_{X_{I_3}}\phi \right|,\quad|I_1|+|I_2|+|I_3|=k-1,\\
& B: \int_{V_T} \left|  \LL_{\Kbar} \LL^k_{X_1\dots X_k} \phi \right| \left| L_{X_{I_1}}^{|I_1|}X_{i_1}^\alpha L_{X_{I_2}}^{|I_2|}X_{i_2}^\beta\LL_{X_{I_3}}^{|I_3|}F_{\mu\alpha}\LL_{X_{I_4}}^{|I_4|}F^\mu_{~\beta}\LL_{X_{I_5}}^{|I_5|}\phi \right|,\\&\hspace{10cm}|I_1|+\dots+|I_5|=k-2,~i_j\notin I_j,\\
& C: \int_{V_T} \left| \Kbar^\nu F_{\mu\nu}  J^\mu(\LL^k_{X_1\dots X_k}\phi) \right|,\\
&D: \int_{V_T} \left| \Kbar^\nu \LL^k_{X_1\dots X_k} F_{\mu\nu} \LL^k_{X_1\dots X_k} J^\mu(\phi) \right|.
}
\subsubsection*{{$A_1$}} {We consider} the cases in (\ref{one}.1-\ref{one}.14) where the terms that were previously bounded in $L^\infty$ are differentiated so many times that they can no longer be placed in $L^\infty.$ For simplicity of notation we drop the Lie derivatives from the notations.
\subsubsection*{(\ref{one}.1)}
\Aligns{
\int_{V_T}&\taup^3|(D_L+\frac{1}{r})\phithree||\rhotwo||D_L\phi|\\
&\leq\int_{-1}^\infty\|\taup(D_L+\frac{1}{r})\phithree\|_{L^2(C_u)}\|\taum\rhotwo\|_{L^2(C_u)}\|\taum^{-1}\taup^2 D_L\phi\|_{L^\infty(C_u)}du\lesssim \epsilon^3.
}
\subsubsection*{(\ref{one}.2)}
\Aligns{
\int_{V_T}&\taup^3|(D_L+\frac{1}{r})\phithree||\alphatwo||\sD\phi|\\
&\leq\int_{-1}^\infty \|\taup(D_L+\frac{1}{r})\phithree\|_{L^2(C_u)}\|\taup\alphatwo\|_{L^2(C_u)}\|\taup |\sD\phi|\|_{L^\infty(C_u)}du\lesssim\epsilon^3.
}
\subsubsection*{(\ref{one}.3)} Fix $\delta\in(0,1).$ Then
\Aligns{
\int_{V_T}&\taup^2\taum|(D_L+\frac{1}{r})\phithree||\alphabartwo||\sD\phi|\lesssim\epsilon \int_{V_T}\taum^{\frac{1}{2}}|(D_L+\frac{1}{r})\phithree||\alphabartwo|\\
&\leq\epsilon \left( \int_{V_T} \taum^{-1-\delta} \taup^2 | (D_L+\frac{1}{r}) \phithree |^2 \right)^{\frac{1}{2}} \left( \int_{V_T}\taup^{-2+\delta}\taum^2|\alphabartwo|^2\right)^{\frac{1}{2}}\\
&=\epsilon \left( \int_{-1}^\infty \taum^{-1-\delta} \left(\int_{C_u}\taup^2 | (D_L+\frac{1}{r}) \phithree |^2\right)du \right)^{\frac{1}{2}} \left( \int_{0}^\infty\taup^{-2+\delta}\left(\int_{{\Cbar}_{\ubar}}\taum^2|\alphabartwo|^2\right)d\ubar\right)^{\frac{1}{2}}\\
&\lesssim\epsilon^3.
}
\subsubsection*{(\ref{one}.4)} Write $D_{\Lbar}\phitwo=(D_{\Lbar}-\frac{1}{r})\phitwo+\frac{\phitwo}{r}.$ For the first term we have
\Aligns{
&\int_{V_T}\taup^2\taum|(D_L+\frac{1}{r})\phithree||(D_{\Lbar}-\frac{1}{r})\phitwo||\rho|\lesssim\epsilon \int_{V_T}\taum^{\frac{1}{2}}|(D_L+\frac{1}{r})\phithree||(D_{\Lbar}-\frac{1}{r})\phitwo|\\
&~\leq\epsilon \left( \int_{-1}^\infty \taum^{-1-\delta} \left(\int_{C_u}\taup^2 | (D_L+\frac{1}{r}) \phithree |^2\right)du \right)^{\frac{1}{2}} \left( \int_{0}^\infty\taup^{-2+\delta}\left(\int_{{\Cbar}_{\ubar}}\taum^2|(D_{\Lbar}-\frac{1}{r})\phitwo|^2\right)d\ubar\right)^{\frac{1}{2}}\\
&~\lesssim\epsilon^3.
}
The other term {can be} bounded as
\Aligns{
{\int_{V_T}\taup^2\taum|(D_L+\frac{1}{r})\phithree||\frac{\phi}{r}||\rho|\leq\int_{-1}^\infty\|\taup(D_L+\frac{1}{r})\phithree\|_{L^2(C_u)}\|\frac{\taum}{r}|\phitwo|\|_{L^2(C_u)}\|\taup|\rho|\|_{L^\infty}du\lesssim\epsilon^3.}
}
{\subsubsection*{(\ref{one}.5)} Writing $D_{L}\phitwo=(D_{L}+\dfrac{1}{r})\phitwo-\dfrac{\phitwo}{r}$, we have:
\begin{align*}
\int_{V_{T}}\taum^{2}\taup|(D_{\Lbar}-\frac{1}{r})\LL^{k}\phi||\rho||(D_{L}+\frac{1}{r})\phi|\lesssim\int_{V_{T}}\taum^{1/2}|(D_{\Lbar}-\frac{1}{r})\LL^{k}\phi||(D_{L}+\frac{1}{r})\phi|
\end{align*}
which can be bounded as in the previous case. For the other {term we} have
\begin{align*}
\int_{V_{T}}\taum^{2}\taup|(D_{\Lbar}-\frac{1}{r})\LL^{k}\phi||\rho||\frac{1}{r}\phi|&\lesssim\int_{0}^{\infty}\taum\|\rho\|_{L^{\infty}}\|\taum(D_{\Lbar}-\frac{1}{r})\LL^{k}\phi\|_{L^{2}(\Cbar_{\ubar})}\|\frac{\taup}{r}\phi\|_{L^{2}(\Cbar_{\ubar})}d\ubar{\lesssim\epsilon^3.}
\end{align*}
}
\subsubsection*{(\ref{one}.6)} 
\Aligns{
&\int_{V_T}\taum^2\taup|(D_{\Lbar}-\frac{1}{r})\phithree||\alphatwo||\sD\phi|\lesssim\epsilon \int_{V_T}\taum^{\frac{1}{2}}|(D_{\Lbar}-\frac{1}{r})\phithree||\alphatwo|\\
&\quad\leq\epsilon \left( \int_{-1}^\infty \taum^{-1-\delta} \left(\int_{C_u}\taup^2 | \alphatwo|^2\right)du \right)^{\frac{1}{2}} \left( \int_{0}^\infty\taup^{-2+\delta}\left(\int_{{\Cbar}_{\ubar}}\taum^2|(D_{\Lbar}-\frac{1}{r})\phithree|^2\right)d\ubar\right)^{\frac{1}{2}}{\lesssim\epsilon^3.}
}
\subsubsection*{(\ref{one}.7)} 
{\Aligns{
\int_{V_T}\taum^3|(D_{\Lbar}-\frac{1}{r})\phithree||\alphabartwo||\sD\phi|\leq\int_{0}^\infty \|\taum(D_{\Lbar}-\frac{1}{r})\phithree\|_{L^2({\Cbar}_{\ubar})}\|\taum\alphabartwo\|_{L^2({\Cbar}_{\ubar})}\|\taum |\sD\phi|\|_{L^\infty}d\ubar\lesssim\epsilon^3.
}}
\subsubsection*{(\ref{one}.8)} 
\Aligns{
&\int_{V_T}\taum^3|(D_{\Lbar}-\frac{1}{r})\phithree||\rhotwo||D_{\Lbar}\phi|\\
&\quad\leq\int_{0}^\infty \|\taum(D_{\Lbar}-\frac{1}{r})\phithree\|_{L^2({\Cbar}_{\ubar})}\left(\|\frac{\taup}{r}|\phi|\|_{L^2({\Cbar}_{\ubar})}+\|\taum|(D_{\Lbar}-\frac{1}{r})\phi|^2\|_{L^2({\Cbar}_{\ubar})}\right)\|\taum|\rho|\|_{L^\infty({\Cbar}_{\ubar})}d\ubar\\
&\quad\lesssim\epsilon^3.
}
\subsubsection*{(\ref{one}.9)} Here we will use $L^4-L^4$ Sobolev estimates, since we cannot bound the factor involving $\big(D_{\Lbar}-\dfrac{1}{r}\big)\phi$ in $L^{\infty}$ directly. We consider the two regions $V_{T}\bigcap\{r\leq1+\frac{t}{2}\}$ and $V_{T}\bigcap\{r\geq 1+\frac{t}{2}\}$ separately. In the first region, $\taup$ behaves like $\taum$. Since $\|\alpha\|_{L^{\infty}}\lesssim\taup^{-5/2}$, we have
\begin{align*}
&\int_{V_{T}\bigcap\{r\leq 1+\frac{t}{2}\}}\taup^{3}|(D_{L}+\frac{1}{r})\LL^{k}\phi||\alpha||(D_{\Lbar}-\frac{1}{r})\phi|\\
&\quad\lesssim\int_{0}^{\infty}(1+t)^{-3/2}\|\taup(D_{L}+\frac{1}{r})\LL^{k}\phi\|_{L^{2}(\Sigma_{t}\bigcap\{r\leq 1+\frac{t}{2}\})}\|\taum(D_{\Lbar}-\frac{1}{r})\phi\|_{L^{2}(\Sigma_{t}\bigcap\{r\leq 1+\frac{t}{2}\})}dt\\
&\quad\lesssim\int_{0}^{\infty}(1+t)^{-3/2}\|\taup(D_{L}+\frac{1}{r})\LL^{k}\phi\|_{L^{2}(\Sigma_{t})}\|\taum(D_{\Lbar}-\frac{1}{r})\phi\|_{L^{2}(\Sigma_{t})}dt.
\end{align*}
The second region is more delicate, because we need to use the $L^{4}$-Sobolev estimates. We denote by $r_m(u)$ and $r_M(u)$ the minimum and maximum radii on $C_u(T)$ respectively. Then
\begin{align*}
\int_{V_{T}\bigcap\{r\geq 1+\frac{t}{2}\}}\taup^{3}&|(D_{L}+\frac{1}{r})\LL^{k}\phi||\alpha||(D_{\Lbar}-\frac{1}{r})\phi|\lesssim\int_{V_{T}}r^{3}|(D_{L}+\frac{1}{r})\LL^{k}\phi||\alpha||(D_{\Lbar}-\frac{1}{r})\phi|\\
&\lesssim\int_{-1}^{\infty}\Big(\int_{C_{u}}r^{2}|(D_{L}+\frac{1}{r})\LL^{k}\phi|^{2}\Big)^{1/2}\\
&\quad\quad\quad\quad\cdot
\Big[\int_{r_{m}(u)}^{r_{M}(u)}\Big(\int_{\widetilde{S}_{u,r}}r^{2}\taum^6|(D_{\Lbar}-\frac{1}{r})\phi|^{4}\Big)^{1/2}\Big(\int_{\widetilde{S}_{u,r}}r^{6}|\alpha|^{4}\Big)^{1/2}dr\Big]^{1/2}\taum^{-3/2}du.
\end{align*}
The rest of the argument is the same as (\ref{first L4}) above.
\subsubsection*{(\ref{one}.10)} This is similar to the previous term, but in our application of $L^4$ estimates we place $\alphabar$ where we previously had $(D_{\Lbar}-\frac{1}{r})\phi$ and $(D_L+\frac{1}{r})\phi$ where we previously had $\alpha.$
\subsubsection*{(\ref{one}.11)}
{\Aligns{
\int_{V_T}&\taup^3|(D_L+\frac{1}{r})\phithree||\sigmatwo|(|\sD\phi|+\frac{|\phi|}{r})\\
&\leq\int_{-1}^\infty\|\taup|(D_L+\frac{1}{r})\phithree|\|_{L^2(C_u)}\|\taum\sigmatwo\|_{L^2(C_u)}\|\taum^{-1}\taup^2(|\sD\phi|+\frac{|\phi|}{r})\|_{L^\infty(C_u)}\lesssim\epsilon^3.
}}
\subsubsection*{(\ref{one}.12)}
\Aligns{
\int_{V_T}&\taum^2\taup|(D_{\Lbar}-\frac{1}{r})\phithree||\alphatwo||(D_{\Lbar}-\frac{1}{r})\phi|\lesssim\epsilon \int_{V_T}\taum^{\frac{1}{2}}|(D_{\Lbar}-\frac{1}{r})\phithree||\alphatwo|\\
&\leq\epsilon \left( \int_{-1}^\infty \taum^{-1-\delta} \left(\int_{C_u}\taup^2 | \alphatwo|^2\right)du \right)^{\frac{1}{2}} \left( \int_{0}^\infty\taup^{-2+\delta}\left(\int_{{\Cbar}_{\ubar}}\taum^2|(D_{\Lbar}-\frac{1}{r})\phithree|^2\right)d\ubar\right)^{\frac{1}{2}}\lesssim\epsilon^3.
}
\subsubsection*{(\ref{one}.13)} Noting that $|\alphabar|\lesssim\epsilon\taum^{-1}\taup^{-3/2}$ we find ourselves in the same situation as (\ref{one}.4) above.
\subsubsection*{(\ref{one}.14)}
\Aligns{
\int_{V_T}\taum^2\taup|(D_{\Lbar}&-\frac{1}{r})\phithree||\sigmatwo|(|\sD\phi|+\frac{|\phi|}{r})\\
&\leq\int_{0}^\infty\|\taum|(D_{\Lbar}-\frac{1}{r})\phithree|\|_{L^2({\Cbar}_{\ubar})}\|\taum(|\sD\phi|+\frac{|\phi|}{r})\|_{L^\infty({\Cbar}_{\ubar})}\|\taup|\sigmatwo|\|_{L^2({\Cbar}_{\ubar})}d\ubar\lesssim\epsilon^3.
}
\subsubsection*{$A_2$} Here we commute more Lie derivatives with $J$ (recall that $\nabla$ and $\LL_X$ commute.) In (\ref{LGamma J}) above we computed the effect of commuting one derivative. Repeating that computation we get
{\Align{\label{LLJ commutator}
\LL_Y\LL_X J_\mu=&\Im(\con{\LL_Y\LL_X\phi}D_\mu\phi)+\Im(\con{\LL_X\phi}D_\mu\LL_Y\phi)+\Im(\con{\LL_Y\phi}D_\mu\LL_X\phi)\\
&+\Im(\phibar D_\mu\LL_Y\LL_X\phi)-2Y^\beta F_{\mu\beta}\Re(\phibar\LL_X\phi)+2X^\alpha F_{\mu\alpha}\Re(\phibar\LL_Y\phi)\\
&+L_YX^\alpha F_{\mu\alpha}|\phi|^2+X^\alpha\LL_YF_{\mu\alpha}|\phi|^2+\Omega_YX^\alpha F_{\mu\alpha}|\phi|^2.
}}
Note also that when $\mu=L$ or $\Lbar$ we have
\Align{\label{LLJ cancelations}
&\Im(\con{\LL_X\phi}D_L\LL_Y\phi)+\Im(\con{\LL_Y\phi}D_L\LL_X\phi)=\Im(\con{\LL_X\phi}(D_L+\frac{1}{r})\LL_Y\phi)+\Im(\con{\LL_Y\phi}(D_L+\frac{1}{r})\LL_X\phi),\\
&\Im(\con{\LL_X\phi}D_{\Lbar}\LL_Y\phi)+\Im(\con{\LL_Y\phi}D_{\Lbar}\LL_X\phi)=\Im(\con{\LL_X\phi}(D_{\Lbar}-\frac{1}{r})\LL_Y\phi)+\Im(\con{\LL_Y\phi}(D_{\Lbar}-\frac{1}{r})\LL_X\phi),\\
&\Im(\con{\LL_Y\LL_X\phi}D_L\phi)+\Im(\con{\phi}D_L\LL_Y\LL_X\phi)=\Im(\con{\LL_Y\LL_X\phi}(D_L+\frac{1}{r})\phi)+\Im(\con{\phi}(D_L+\frac{1}{r})\LL_Y\LL_X\phi),\\
&\Im(\con{\LL_Y\LL_X\phi}D_{\Lbar}\phi)+\Im(\con{\phi}D_{\Lbar}\LL_Y\LL_X\phi)=\Im(\con{\LL_Y\LL_X\phi}(D_{\Lbar}-\frac{1}{r})\phi)+\Im(\con{\phi}(D_{\Lbar}-\frac{1}{r})\LL_Y\LL_X\phi).
}
{The case where we commute more derivatives is similar: 
\Align{\label{LLJ kth commutator}
\LL_Z\LL_Y\dots\LL_X J_\mu=&\quad\Im(\con{\LL_Z\LL_Y\dots\LL_X\phi}D_\mu\phi)+\Im(\phibar D_\mu\LL_Z\LL_Y\dots\LL_X\phi)\\
&+\Im(\con{\LL_Y\dots\LL_X\phi}D_\mu\LL_Z\phi)+\Im(\con{\LL_Z\phi}D_\mu\LL_Y\dots\LL_X\phi)\\
&+\dots\\
&+\Im(\con{\LL_X\phi}D_\mu\LL_Z\LL_Y\dots\phi)+\Im(\con{\LL_Z\LL_Y\dots\phi}D_\mu\LL_X\phi)\\
&+E,
}
where the error term $E$ can be bounded as
\Align{\label{LLJ kth commutator error}
|E|\lesssim\sum_{|I_1|+|I_2|+|I_3|+|I_4|\leq k-1}|L^{I_1}X^\alpha\LL^{I_2}F_{\mu\alpha}||\LL^{I_3}\phi||\LL^{I_4}\phi|.
}
Notice that we also have cancelations analogous to (\ref{LLJ cancelations}).} Again we have to repeat the computations in estimating $\EE_2^1$ but where the terms which were previously bounded in $L^\infty$ are differentiated enough times that one cannot use $L^\infty$ estimates for them. $E$ was not considered in estimating $\EE_2^1$ so we will handle these separately at the end. Note, however, that similar terms came up in $\EE_3^1.$
\subsubsection*{(\ref{two}.1)} Here we need to consider the case where $\phi^2$ cannot be bounded in $L^\infty.$ Note, however, that we can still bound one of the $\phi$ factors in $\phi^2$ as well as $(D_L+\frac{1}{r})\phi$ in $L^\infty$ 
\begin{align*}
\int_{V_T}&\taup^3|(D_L+\frac{1}{r})\phithree||\phi||\phi||(D_{L}+\frac{1}{r})\phi|\\
&\leq\int_{-1}^\infty\|\taup|(D_L+\frac{1}{r})\LL^{k}\phi|\|_{L^2(C_u)}\|\frac{\taum}{r}\phi\|_{L^2(C_u)}\|\taup^2|(D_L+\frac{1}{r})\phi|\|_{L^{\infty}(C_u)}\|\taup\phi\|_{L^\infty(C_u)}\taum^{-1}du\lesssim\epsilon^3.
\end{align*}
\subsubsection*{(\ref{two}.2)}\begin{align*}
&\int_{V_T}\taup^2\taum|(D_L+\frac{1}{r})\phithree||\phi|^2|(D_{\Lbar}-\frac{1}{r})\phi|\lesssim\epsilon\int_{V_T}\taup|(D_L+\frac{1}{r})\phithree|\taum|(D_{\Lbar}-\frac{1}{r})\phi|\taup^{-1}\taum^{-1}\\
&\quad\leq\epsilon\left(\int_{-1}^\infty\taum^{-2}\left(\int_{C_u}\taup^2|(D_L+\frac{1}{r})\phithree|^2\right)du\right)^{1/2}\left(\int_{0}^\infty\taup^{-2}\left(\int_{{\Cbar}_{\ubar}}\taum^2|(D_{\Lbar}-\frac{1}{r})\phi|^2\right)d\ubar\right)^{1/2}\lesssim\epsilon^3.
\end{align*}
\subsubsection*{(\ref{two}.3)} Bounding $|\phi|^2$ in $L^\infty$ the computation is exactly the same as in the previous case.
{\subsubsection*{(\ref{two}.4)} Suppose only one of the $\phi$ factors can be placed in $L^\infty.$
\begin{align*}
\int_{V_T}\taum^3&|(D_{\Lbar}-\frac{1}{r})\phithree||\phi||\phi||(D_{\Lbar}-\frac{1}{r})\phi|\\
&\leq\int_{0}^\infty\|\taum|(D_{\Lbar}-\frac{1}{r})\phithree|\|_{L^2({\Cbar}_{\ubar})}\|\frac{\taup}{r}|\phi|\|_{L^2({\Cbar}_{\ubar})}\|\taum^2|\phi||(D_{\Lbar}-\frac{1}{r})\phi|\|_{L^\infty({\Cbar}_{\ubar})}d\ubar\lesssim\epsilon^3.
\end{align*}}
\subsubsection*{(\ref{two}.5)}
\begin{align*}
\int_{V_T}\taup^3&|(D_L+\frac{1}{r})\phithree||\phi||\phi||\sD\phi|\\
&\leq\int_{-1}^\infty\|\taup|(D_L+\frac{1}{r})\phithree|\|_{L^2(C_u)}\|\frac{\taum}{r}|\phi||\|_{L^2(C_u)}\|\taup^3\taum^{-1}|\phi||\sD\phi|\|_{L^\infty(C_u)}du\lesssim\epsilon^3.
\end{align*}
\subsubsection*{(\ref{two}.6)}
\begin{align*}
\int_{V_T}\taum^2\taup&|(D_{\Lbar}-\frac{1}{r})\phithree||\phi||\phi||\sD\phi|\\
&\leq\int_{0}^\infty\|\taum|(D_{\Lbar}-\frac{1}{r})\phithree|\|_{L^2({\Cbar}_{\ubar})}\|\frac{\taup}{r}|\phi|\|_{L^2({\Cbar}_{\ubar})}\|\taup\taum|\phi||\sD\phi|\|_{L^\infty({\Cbar}_{\ubar})}d\ubar\lesssim\epsilon^3.
\end{align*}
Next we consider the contribution of the last five terms in {$E.$} We have to consider integrals over $V_T$ of terms of the form
\Align{\label{A2 error 1}
Y^\mu X^\alpha F_{\mu\alpha}{|\phi|^3}|\left(|\taup^2(D_L+\frac{1}{r})\phithree|+\taum^2|(D_{\Lbar}-\frac{1}{r})\phithree|\right),
}
where {one of the factors in $|\phi|^3$} cannot be placed in $L^\infty.$ Inspecting the commutator vector-fields we see that we need to consider the cases where $|Y^\mu X^\alpha F_{\mu\alpha}|$ is one of the following:
\Align{\label{A2 error 2}
&\taup^2|\alpha|,\quad\taup^2|\sigma|,\quad\taup\taum|\alphabar|,\quad\taup\taum|\rho|.
}
To estimate these terms, it suffices {to} observe the following point-wise {bounds:}
\allowdisplaybreaks{\begin{align*}
&\taup^4|\alpha||(D_L+\frac{1}{r})\phithree|{|\phi|^3}\lesssim\epsilon\taup|(D_L+\frac{1}{r})\phithree||\frac{\taum}{r}{\phi}|\taup^{-1/2}\taum^{-2},\\
&{\taup^4|\alpha||(D_L+\frac{1}{r})\phithree|{|\phi|^3}\lesssim\epsilon\taup|(D_L+\frac{1}{r})\phithree|\taup|\alpha|\taup^{-1}\taum^{-3/2},}\\
&\taup^2\taum^{2}|\alpha||(D_{\Lbar}-\frac{1}{r})\phithree|{|\phi|^3}\lesssim\epsilon\taum|(D_{\Lbar}-\frac{1}{r})\phithree||\frac{\taup}{r}{\phi}|\taup^{-5/2},\\
&{\taup^2\taum^{2}|\alpha||(D_{\Lbar}-\frac{1}{r})\phithree|{|\phi|^3}\lesssim\epsilon|(D_{\Lbar}-\frac{1}{r})\phithree||\alpha|\taup^{-1/2}},\\
&\taup^3\taum|\alphabar||(D_{L}+\frac{1}{r})\phithree|{|\phi|^3}\lesssim\epsilon\taup|(D_{L}+\frac{1}{r})\phithree||\frac{\taum}{r}{\phi}|\taum^{-5/2},\\
&{\taup^3\taum|\alphabar||(D_{L}+\frac{1}{r})\phithree|{|\phi|^3}\lesssim\epsilon|(D_{L}+\frac{1}{r})\phithree||\alphabar|\taum^{-1/2},}\\
&\taup\taum^3|\alphabar||(D_{\Lbar}-\frac{1}{r})\phithree|{|\phi|^3}\lesssim\epsilon\taum|(D_{\Lbar}-\frac{1}{r})\phithree||\frac{\taup}{r}{\phi}|\taup^{-2}\taum^{-1/2},\\
&{\taup\taum^3|\alphabar||(D_{\Lbar}-\frac{1}{r})\phithree|{|\phi|^3}\lesssim\epsilon\taum|(D_{\Lbar}-\frac{1}{r})\phithree|\taum|\alphabar|\taup^{-2}\taum^{-1/2},}\\
&\taup^4|\sigma||(D_L+\frac{1}{r})\phithree|{|\phi|^3}\lesssim\epsilon\taup|(D_L+\frac{1}{r})\phithree||\frac{\taum}{r}{\phi}|{\taum^{-5/2}},\\
&{\taup^4|\sigma||(D_L+\frac{1}{r})\phithree|{|\phi|^3}\lesssim\epsilon\taup|(D_L+\frac{1}{r})\phithree|\taum|\sigma|{\taum^{-5/2}},}\\
&\taup^2\taum^2|\sigma||(D_{\Lbar}-\frac{1}{r})\phithree|{|\phi|^3}\lesssim\epsilon\taum|(D_{\Lbar}-\frac{1}{r})\phithree||\frac{\taup}{r}{\phi}|\taup^{-2}\taum^{-1/2},\\
&{\taup^2\taum^2|\sigma||(D_{\Lbar}-\frac{1}{r})\phithree|{|\phi|^3}\lesssim\epsilon\taum|(D_{\Lbar}-\frac{1}{r})\phithree|\taup|\sigma|\taup^{-2}\taum^{-1/2},}\\
&\taup^3\taum|\rho||(D_{L}+\frac{1}{r})\phithree|{|\phi|^3}\lesssim\epsilon\taup|(D_{L}+\frac{1}{r})\phithree||\frac{\taum}{r}{\phi}|\taup^{-1}\taum^{-3/2},\\
&{\taup^3\taum|\rho||(D_{L}+\frac{1}{r})\phithree|{|\phi|^3}\lesssim\epsilon\taup|(D_{L}+\frac{1}{r})\phithree|\taum|\rho|\taup^{-1}\taum^{-3/2},}\\
&\taup\taum^3|\rho||(D_{\Lbar}-\frac{1}{r})\phithree|{|\phi|^3}\lesssim\epsilon\taum|(D_{\Lbar}-\frac{1}{r})\phithree||\frac{\taup}{r}{\phi}|\taup^{-3}\taum^{1/2},\\
&{\taup\taum^3|\rho||(D_{\Lbar}-\frac{1}{r})\phithree|{|\phi|^3}\lesssim\epsilon\taum|(D_{\Lbar}-\frac{1}{r})\phithree|\taup|\rho|\taup^{-5/2}.}
\end{align*}}
This finishes the estimates for $A_2.$ We postpone the estimates for $B$ for now (because $B$ corresponds to the terms that did not come up when we commuted only one derivative{). The} estimates for $C$ are identically the same as those when we commute only one derivative, because we always bound the electromagnetic {field} in $L^{\infty}.$
\subsubsection*{$D$} {Here we} have to commute $k$ derivatives with $J^\mu,$ the effect of which was computed in (\ref{LLJ kth commutator}) above. {Since} in the case where we commute only one derivative, the terms to be bounded in $L^{\infty}$ and the terms to be bounded in $L^{2}$ are {symmetric, the} contribution of the first four lines {in (\ref{LLJ kth commutator})} are bounded {in the} same way as before, and we only need to consider the contribution of the error terms $E.$ Inspecting the definition of $D$ we see that we have to consider the following {terms:}
\begin{align*}
{|\phi|^2\Big(\taup^3(|\rho||\alpha|+|\alpha|^2+|\alpha||\sigma|)+\taum\taup^2(|\rho|^2+|\alpha||\alphabar|)
+\taum^2\taup(|\rho||\alphabar|+|\alphabar||\sigma|+|\alphabar||\alpha|)+\taum^3|\alphabar|^2\Big).}
\end{align*}
To estimate the {contributions of these terms it suffices} to observe the following point-wise {bounds.}
\Aligns{
\int_{V_{T}}\taup^2\taum|\phi|^{2}{|\alpha||\alphabar|}&\lesssim\epsilon\int_{V_{T}}\taup^{-1}\taum^{-1}{|\taup\alpha||\taum\alphabar|}\\
&\lesssim{\epsilon}\Big(\int_{-1}^{\infty}\taup^{-2}\|\taum|\alphabar|\|_{L^{2}(\Cbar_{\ubar})}^{2}d\ubar\Big)^{1/2}\Big(\int_{0}^{\infty}\taum^{-2}\|\taup|\alpha|\|_{L^{2}(C_{u})}^{2}du\Big)^{1/2},
}
and
\allowdisplaybreaks{\begin{align}
&\taum\taup^2|\phi|^2|\rho|^2\lesssim\epsilon|\frac{\taum}{r}\phi||\taum|\rho||\taum^{-2},\nonumber\\
&\taum\taup^{2}|\phi|^{2}|\rho||\rho|\lesssim\epsilon{|\taum\rho|^2}\taum^{-2},\nonumber\\
&\taup^3|\phi|^{2}|\rho||\alpha|\lesssim\epsilon|\taum\rho||\taup|\alpha||\taum^{-2},\nonumber\\
&\taup^{3}|\phi|^2|\rho||\alpha|\lesssim\epsilon|\taup\rho||\frac{\taup}{r}\phi|\taup^{-3/2}\taum^{-1/2},\nonumber\\
&\taup^3|\phi|^2|\alpha|^2\lesssim\epsilon|\frac{\taum}{r}\phi||\taup|\alpha||\taum^{-3/2}\taup^{-1/2},\nonumber\\
&\taup^{3}|\phi|^{2}|\alpha|^2\lesssim\epsilon|\taup\alpha|^2\taup^{-1}{\taum^{-1},}\nonumber\\
&\taup^3|\phi|^{2}\alpha||\sigma|\lesssim\epsilon|\taup\alpha||\taum\sigma|\taum^{-2},\nonumber\\
&\taup^{3}|\phi|^2|\sigma||\alpha|\lesssim\epsilon|\taup\alpha||\frac{\taum}{r}\phi|{\taum^{-2},}\nonumber\\
&\taup^{2}\taum|\phi|^2|\alphabar||\alpha|\lesssim\epsilon\taup|\alpha||\frac{\taum}{r}\phi|\taum^{-2},\label{F errors 5}\\
&\taum^2\taup|\phi|^{2}|\rho||\alphabar|\lesssim\epsilon\taup|\rho||\taum\alphabar|\taup^{-2},\nonumber\\
&\taum^{2}\taup|\phi||\phi||\alphabar||\rho|\lesssim\epsilon|\taum\rho||\frac{\taum}{r}\phi|\taum^{-2},\nonumber\\
&\taum^3|\phi|^{2}|\alphabar|^2\lesssim\epsilon|\taum\alphabar|^2\taup^{-2},\nonumber\\
&\taum^3|\phi|^2|\alphabar|^2\lesssim\epsilon|\taum\alphabar||\frac{\taup}{r}\phi|\taup^{-2},\nonumber\\
&\taum^2\taup|\phi|^{2}|\alphabar||\sigma|\lesssim\epsilon|\taum\alphabar||\taup\sigma|\taup^{-2},\nonumber\\
&\taum^{2}\taup|\phi|^2|\alphabar||\sigma|\lesssim\epsilon|\taum\alphabar||\frac{\taup}{r}\phi|\taup^{-2},\nonumber\\
&\taum^2\taup|\phi|^{2}|\alphabar||\alpha|\lesssim\epsilon|\alphabar||\alpha|,\nonumber\\
&\taum^2\taup|\phi|^2|\alphabar||\alpha|\lesssim\epsilon|\taum\alphabar||\frac{\taup}{r}\phi|{\taup^{-2}}.\nonumber
\end{align}}
\subsubsection*{$B$} {Dropping the derivatives from the notation, the structure of $X^\alpha Y^\beta F_{\mu\alpha}F^\mu_{~\beta}\phi$ was computed in (\ref{higher derivative extra term}).} Note that since the total number of derivatives here is $k-2,$ every term can be bounded in $L^\infty.$ To estimate {these contributions} in $B$ it suffices to observe the following {point-wise bounds}.
\allowdisplaybreaks{\begin{align*}
&\taup^4|(D_L+\frac{1}{r})\phithree||\phi||\alpha|(|\alpha|+|\alphabar|+|\rho|+|\sigma|)\lesssim\epsilon|\taup(D_L+\frac{1}{r})\phithree||\taup\alpha|\taum^{-2},\\
&\taup^2\taum^2|(D_{\Lbar}-\frac{1}{r})\phithree||\phi||\alpha|(|\alpha|+|\alphabar|+|\rho|+|\sigma|)\lesssim\epsilon|\taum(D_{\Lbar}-\frac{1}{r})\phithree||\frac{\taup}{r}|\phi||\taup^{-3/2}\taum^{-1/2},\\
&\taup^4|(D_L+\frac{1}{r})\phithree||\phi||\sigma|^2\lesssim\epsilon|\taup(D_L+\frac{1}{r})\phithree||\taum|\sigma||\taum^{-2},\\
&\taup^2\taum^2|(D_{\Lbar}-\frac{1}{r})\phithree||\phi||\sigma|^2\lesssim\epsilon|\taum(D_{\Lbar}-\frac{1}{r})\phi||\taup|\sigma||\taup^{-2},\\
&\taup^3\taum|(D_L+\frac{1}{r})\phithree||\phi||\rho|^2\lesssim\epsilon|\taup(D_L+\frac{1}{r})\phithree||\taum|\rho||\taup^{-1}\taum^{-1},\\
&\taup\taum^3|(D_{\Lbar}-\frac{1}{r})\phithree||\phi||\rho|^2\lesssim\epsilon|\taum(D_{\Lbar}-\frac{1}{r})\phi||\taup|\rho||\taup^{-2},\\
&\taup^3\taum|(D_L+\frac{1}{r})\phithree||\phi||\alphabar|(|\alpha|+|\rho|+|\sigma|)\lesssim\epsilon|\taup(D_L+\frac{1}{r})\phithree||\frac{\taum}{r}|\phi||\taum^{-2},\\
&\taup\taum^3|(D_{\Lbar}-\frac{1}{r})\phithree||\phi||\alphabar|(|\alpha|+|\rho|+|\sigma|)\lesssim\epsilon|\taum(D_{\Lbar}-\frac{1}{r})\phithree||\frac{\taup}{r}|\phi||\taup^{-2},\\
&\taup^2\taum^2|(D_L+\frac{1}{r})\phithree||\phi||\alphabar|^2\lesssim\epsilon|\taup(D_L+\frac{1}{r})\phithree||\frac{\taum}{r}|\phi||\taum^{-2},\\
&\taum^4|(D_{\Lbar}-\frac{1}{r})\phithree||\phi||\alphabar|^2\lesssim\epsilon|\taum(D_{\Lbar}-\frac{1}{r})\phi||\taum|\alphabar||\taup^{-2}.
\end{align*}}
\end{proof}
\section{Conclusions}
We have now completed the estimates inside and outside $V_T,$ concluding the proof of Theorem \ref{main theorem}. In the case of a non-compactly supported scalar field, our main simplification was avoiding fractional Morawetz estimates for the electromagnetic field as well as certain space-time estimates appearing in \cite{LS1}. When the scalar field is compactly supported, we were able to dispense with the fractional Morawetz estimates for the scalar field as well.
\section*{Acknowledgments} L. Bieri and S. Miao are supported by the NSF grant DMS-1253149 to The University of Michigan and S. Shahshahani is supported by the NSF grant NSF-1045119. Support of the NSF is gratefully acknowledged by the authors. The authors would like to thank Sung-Jin Oh and Jacob Sterbenz for helpful discussions.
\bibliographystyle{plain}
\bibliography{MKGbib}

\begin{thebibliography}{10}

\bibitem{ChCh1}
Yvonne Choquet-Bruhat and Demetrios Christodoulou.
\newblock Existence of global solutions of the {Y}ang-{M}ills, {H}iggs and
  spinor field equations in {$3+1$} dimensions.
\newblock {\em Ann. Sci. \'Ecole Norm. Sup. (4)}, 14(4):481--506 (1982), 1981.

\bibitem{CK1}
D.~Christodoulou and S.~Klainerman.
\newblock Asymptotic properties of linear field equations in {M}inkowski space.
\newblock {\em Comm. Pure Appl. Math.}, 43(2):137--199, 1990.

\bibitem{EardleyMoncrief1}
Douglas~M. Eardley and Vincent Moncrief.
\newblock The global existence of {Y}ang-{M}ills-{H}iggs fields in
  {$4$}-dimensional {M}inkowski space. {I}. {L}ocal existence and smoothness
  properties.
\newblock {\em Comm. Math. Phys.}, 83(2):171--191, 1982.

\bibitem{EardleyMoncrief2}
Douglas~M. Eardley and Vincent Moncrief.
\newblock The global existence of {Y}ang-{M}ills-{H}iggs fields in
  {$4$}-dimensional {M}inkowski space. {II}. {C}ompletion of proof.
\newblock {\em Comm. Math. Phys.}, 83(2):193--212, 1982.

\bibitem{GinVel1}
J.~Ginibre and G.~Velo.
\newblock The {C}auchy problem for coupled {Y}ang-{M}ills and scalar fields in
  the temporal gauge.
\newblock {\em Comm. Math. Phys.}, 82(1):1--28, 1981/82.

\bibitem{KeelRoyTao}
Markus Keel, Tristan Roy, and Terence Tao.
\newblock Global well-posedness of the {M}axwell-{K}lein-{G}ordon equation
  below the energy norm.
\newblock {\em Discrete Contin. Dyn. Syst.}, 30(3):573--621, 2011.

\bibitem{KlaMach1}
S.~Klainerman and M.~Machedon.
\newblock On the {M}axwell-{K}lein-{G}ordon equation with finite energy.
\newblock {\em Duke Math. J.}, 74(1):19--44, 1994.

\bibitem{KriegerSterbenzTataru1}
J.~{Krieger}, J.~{Sterbenz}, and D.~{Tataru}.
\newblock {Global well-posedness for the Maxwell-Klein Gordon equation in 4+1
  dimensions. Small energy}.
\newblock {\em ArXiv e-prints}, November 2012.

\bibitem{LS1}
Hans Lindblad and Jacob Sterbenz.
\newblock Global stability for charged-scalar fields on {M}inkowski space.
\newblock {\em IMRP Int. Math. Res. Pap.}, pages Art. ID 52976, 109, 2006.

\bibitem{MachSter1}
Matei Machedon and Jacob Sterbenz.
\newblock Almost optimal local well-posedness for the {$(3+1)$}-dimensional
  {M}axwell-{K}lein-{G}ordon equations.
\newblock {\em J. Amer. Math. Soc.}, 17(2):297--359 (electronic), 2004.

\bibitem{Oss1}
Robert Osserman.
\newblock The isoperimetric inequality.
\newblock {\em Bull. Amer. Math. Soc.}, 84(6):1182--1238, 1978.

\bibitem{Psarelli1}
Maria Psarelli.
\newblock Asymptotic behavior of the solutions of {M}axwell-{K}lein-{G}ordon
  field equations in {$4$}-dimensional {M}inkowski space.
\newblock {\em Comm. Partial Differential Equations}, 24(1-2):223--272, 1999.

\bibitem{R-T}
Igor Rodnianski and Terence Tao.
\newblock Global regularity for the {M}axwell-{K}lein-{G}ordon equation with
  small critical {S}obolev norm in high dimensions.
\newblock {\em Comm. Math. Phys.}, 251(2):377--426, 2004.

\bibitem{Seg1}
Irving Segal.
\newblock The {C}auchy problem for the {Y}ang-{M}ills equations.
\newblock {\em J. Funct. Anal.}, 33(2):175--194, 1979.

\bibitem{SigmundTesfahun1}
Sigmund Selberg and Achenef Tesfahun.
\newblock Finite-energy global well-posedness of the {M}axwell-{K}lein-{G}ordon
  system in {L}orenz gauge.
\newblock {\em Comm. Partial Differential Equations}, 35(6):1029--1057, 2010.

\bibitem{Shu1}
Wei-Tong Shu.
\newblock Asymptotic properties of the solutions of linear and nonlinear spin
  field equations in {M}inkowski space.
\newblock {\em Comm. Math. Phys.}, 140(3):449--480, 1991.

\bibitem{Shu2}
Wei-Tong Shu.
\newblock Global existence of {M}axwell-{H}iggs fields.
\newblock In {\em Nonlinear hyperbolic equations and field theory ({L}ake
  {C}omo, 1990)}, volume 253 of {\em Pitman Res. Notes Math. Ser.}, pages
  214--227. Longman Sci. Tech., Harlow, 1992.

\end{thebibliography}
\vspace{1cm}
\centerline{\scshape Lydia Bieri \& Shuang Miao \& Sohrab Shahshahani}
\medskip
{\footnotesize
 \centerline{Department of Mathematics, The University of Michigan}
\centerline{530 Church Street
Ann Arbor, MI  48109-1043, U.S.A.}
\centerline{\email{lbieri@umich.edu \& shmiao@umich.edu \& shahshah@umich.edu}}
} 
\end{document}